\theoremstyle{plain}
\newtheorem{theorem}{Theorem}[section]
\newtheorem{lemma}[theorem]{Lemma}
\newtheorem{corollary}[theorem]{Corollary}
\theoremstyle{definition}
\newtheorem{assumption}[theorem]{Assumption}
\theoremstyle{remark}
\newcommand{\dotprod}[2]{\left\langle #1,#2 \right\rangle}
\newcommand{\Obound}[1]{\mathcal{O}\left( #1 \right)}
\newcommand{\OboundTilde}[1]{\tilde{\mathcal{O}}\left( #1 \right)}
\newcommand{\norms}[1]{\left\| #1 \right\|_{[1 - \alpha]} }
\newcommand{\expect}[1]{\mathbb{E}\left[ #1 \right]}
\newcommand{\expectSphere}[1]{\mathbb{E}_{\ee}\left[ #1 \right]}
\providecommand{\mualpha}{\mu_{1-\alpha}}
\definecolor{PineGreen}{HTML}{008B72}
\newcommand{\greencheck}{\color{PineGreen}\ding{51}}
\newcommand{\redx}{\color{red} \ding{55}}
\providecommand{\norm}[1]{\left\lVert#1\right\rVert}
  \DeclareMathOperator*{\argmin}{arg\,min}
  \providecommand{\ee}{\mathbf{e}}
  \providecommand{\uu}{\mathbf{u}}
\providecommand{\mycomment}[3]{\todo[caption={},size=footnotesize,color=#1!20, inline]{\textbf{#2: }#3}}%
\providecommand{\inlinecomment}[3]{%
  {\color{#1}#2: #3}}%
\newcommand\commenter[2]%
\newcommand\csname i#1\endcsname[1]{\inlinecomment{#2}{#1}{##1}}
\newcommand\csname #1\endcsname[1]{\mycomment{#2}{#1}{##1}}
\newcommand{\circledOne}{\text{\ding{172}}}
\newcommand{\circledTwo}{\text{\ding{173}}}
\newcolumntype{b}{>{\columncolor[HTML]{b8e0ff}} c}
\definecolor{main}{HTML}{5989cf}    
\definecolor{sub}{HTML}{cde4ff}     
\newtcolorbox{boxA}{
    fontupper = \bf,
    boxrule = 1.5pt,
    colframe = black 
}
\newtcolorbox{boxB}{
    fontupper = \bf\color{main}, 
    boxrule = 1.5pt,
    colframe = main,
    rounded corners,
    arc = 5pt   
}
\newtcolorbox{boxC}{
    colback = sub, 
    boxrule = 0pt  
}
\newtcolorbox{boxD}{
    colback = white, 
    colframe = black, 
    boxrule = 0pt, 
    toprule = 0.5pt, 
    bottomrule = 0.5pt 
}
\newtcolorbox{boxE}{
    enhanced, 
    boxrule = 0pt, 
    borderline = {0.75pt}{0pt}{black}, 
    borderline = {0.75pt}{2pt}{black} 
}
\newtcolorbox{boxF}{
    colback = white,
    enhanced,
    boxrule = 1.5pt, 
    colframe = white, 
    borderline = {1.5pt}{0pt}{black, dashed} 
}
\newtcolorbox{boxG}{
    enhanced,
    boxrule = 0pt,
    colback = white,
    borderline west = {1pt}{0pt}{black}, 
    borderline west = {0.75pt}{2pt}{black}, 
    borderline east = {1pt}{0pt}{black}, 
    borderline east = {0.75pt}{2pt}{black}
}
\newtcolorbox{boxH}{
    colback = sub, 
    colframe = main, 
    boxrule = 0pt, 
    leftrule = 6pt 
}
\newtcolorbox{boxI}{
    colback = sub, 
    colframe = main, 
    boxrule = 0pt, 
    toprule = 6pt 
}
\newtcolorbox{boxJ}{
    sharpish corners, 
    colback = sub, 
    colframe = main, 
    boxrule = 0pt, 
    toprule = 4.5pt, 
    enhanced,
    fuzzy shadow = {0pt}{-2pt}{-0.5pt}{0.5pt}{black!35} 
}
\newtcolorbox{boxK}{
    sharpish corners, 
    boxrule = 0pt,
    toprule = 4.5pt, 
    enhanced,
    fuzzy shadow = {0pt}{-2pt}{-0.5pt}{0.5pt}{black!35} 
}
\newtcolorbox{boxL}{
    fontupper = \color{main},
    rounded corners,
    arc = 6pt,
    colback = sub, 
    colframe = main!50, 
    boxrule = 0pt, 
    bottomrule = 4.5pt 
}
\newtcolorbox{boxM}{
    fontupper = \color{white},
    rounded corners,
    arc = 6pt,
    colback = main!80, 
    colframe = main, 
    boxrule = 0pt, 
    bottomrule = 4.5pt,
    enhanced,
    fuzzy shadow = {0pt}{-3pt}{-0.5pt}{0.5pt}{black!35}
}
\title{Acceleration Exists! Optimization Problems When Oracle Can Only Compare Objective Function Values}
\author{%
  Aleksandr Lobanov \\
  MIPT, Skoltech, ISP RAS \\
  \texttt{\normalsize lobbsasha@mail.ru}\\
  \And 
  Alexander Gasnikov \\
  Innopolis, MIPT, MI RAS \\
  \texttt{\normalsize gasnikov@yandex.ru}\\
  \And 
  Andrei Krasnov\\
  MIPT, Innopolis  \\
  \texttt{\normalsize krasnov.an@phystech.edu}\\  
}
\begin{document}

\maketitle

\begin{abstract}
  Frequently, the burgeoning field of black-box optimization encounters challenges due to a limited understanding of the mechanisms of the objective function. To address such problems, in this work we focus on the deterministic concept of Order Oracle, which only utilizes order access between function values (possibly with some bounded noise), but without assuming access to their values. As theoretical results, we propose a new approach to create non-accelerated optimization algorithms (obtained by integrating Order Oracle into existing optimization “tools”) in non-convex, convex, and strongly convex settings that are as good~as~both~SOTA coordinate algorithms with first-order oracle and SOTA algorithms with Order Oracle up to logarithm factor. Moreover, using the proposed approach, \textit{we provide the first accelerated optimization algorithm using the Order Oracle}. And also, using an already different approach we provide the asymptotic convergence of \textit{the first algorithm with the stochastic Order Oracle concept}. Finally, our theoretical results demonstrate effectiveness of proposed algorithms~through~numerical~experiments.
\end{abstract}
\section{Introduction}
\label{sec:Introduction}

The black box problem has garnered extensive attention in diverse scientific and engineering domains, reflecting the challenge of optimizing systems with complex and opaque objective functions, prompting the exploration of innovative solutions \cite{Conn_2009, Kimiaei_2022}.

This paper focuses on solving a standard general optimization problem in the following form:
\begin{equation}
    \label{eq:init_problem}
    \min_{x \in \mathbb{R}^d} \left\{ f(x) := \mathbb{E}_{\xi \sim \mathcal{D}} f_\xi (x) \right\},
\end{equation}
where $f: \mathbb{R}^d \rightarrow \mathbb{R}$ is a possibly non-convex, possibly stochastic function. This   problem configuration encompasses a broad range of applications in ML scenarios, e.g. empirical risk minimization,where $\mathcal{D}$ denotes distribution across training data points, and $f_{\xi}(x)$ represents~loss~of~model~$x$~on~data~point~$\xi$.

When the objective function $f(x)$ has exclusive access to a zero-order oracle \cite{Rosenbrock_1960}, problem~\eqref{eq:init_problem} falls under the classification of a black-box optimization problem. This class of problems is actively studied in various application settings, e.g., deep learning \cite{Chen_2017, Gao_2018}, federated learning \cite{Dai_2020, Lobanov_2022, Patel_2022}, reinforcement learning \cite{Choromanski_2018, Mania_2018}, overparameterized models \cite{Lobanov_overparametrization}, online optimization  \citep{Agarwal_2010, Bach_2016, Tsybakov_2022}, multi-armed bandits \cite{Shamir_2017, Lattimore_2021}, hyperparameter settings \cite{Bergstra_2012, Hernandez_2014, Nguyen_2022}, and  control system performance optimization \cite{Bansal_2017, Xu_2022}. It is important to highlight that the zero-order oracle presupposes awareness of the objective function's value $f(x_k)$ at a specific point $x_k$ (which may be inexact), enabling the development of effective gradient-free algorithms tailored to various problem scenarios \cite{Gasnikov_2022}.

In this paper, we consider concept of zero-order oracle, termed \textit{Order Oracle}, to address~problem~\eqref{eq:init_problem}:
\begin{equation}
\label{eq:Order_Oracle}
    \phi(x,y) = \text{sign}\left[f(x) - f(y) + \delta(x,y) \right],
\end{equation}
 
\begin{table*}
    \begin{minipage}{\textwidth}
    \caption{Comparison of oracle complexity for the methods proposed in this work with SOTA methods both in the coordinate descent class and in the Order Oracle concept~\eqref{eq:Order_Oracle}. Notation: $F_0 = f(x_0) - f(x^*)$; $R = \norm{x_0 - x^*}$; $R_{[1-\alpha]} = R_{1-\alpha} (x_0) = \sup_{x \in \mathbb{R}^d: f(x) \leq f(x_0)} \norms{x - x^*}$; $\varepsilon =$ desired accuracy of problem solving.}
    \label{tab:table_compare} 
    \centering
    \resizebox{\linewidth}{!}{ 
    \begin{tabular}{|l|c|c|c|c|ccc|}\toprule
    Reference  & \citet{Nesterov_2012}  & \citet{Gorbunov_2019}  & \citet{Saha_2021} & \citet{Tang_2023} & \multicolumn{3}{c|}{This paper}  \\  \midrule
    Non-convex & \redx & $\Obound{\frac{d S_\alpha F_0}{\varepsilon^2}}$ & \redx & $\Obound{\frac{d L F_0}{\varepsilon^2}}$ & \multicolumn{3}{c|}{$\OboundTilde{\frac{S_\alpha F_0}{\varepsilon^2}}$} \\
    Convex & $ \Obound{\frac{S_\alpha R^2_{[1-\alpha]}}{\varepsilon}}$ & $ \Obound{\frac{d S_\alpha R^2_{[1-\alpha]}}{\varepsilon} \log \frac{1}{\varepsilon}} $ & $ \Obound{\frac{d L R^2}{\varepsilon}}$ & \redx & \multicolumn{3}{c|}{$ \OboundTilde{\frac{S_\alpha R^2_{[1-\alpha]}}{\varepsilon}}$} \\
    Strongly convex & $\Obound{\frac{S_\alpha}{\mualpha} \log \frac{1}{\varepsilon}}$ & $\Obound{\frac{S_\alpha}{\mualpha} \log \frac{1}{\varepsilon}}$ & $\Obound{d \frac{L}{\mu} \log \frac{1}{\varepsilon}}$ & \redx & $\OboundTilde{\frac{S_{\alpha}}{\mualpha} \log \frac{1}{\varepsilon}}$ & & $\OboundTilde{\frac{S_{\alpha/2}}{\sqrt{\mualpha}} \log \frac{1}{\varepsilon}}$ \\ \midrule 
    Order Oracle? & \redx & \greencheck & \greencheck & \greencheck & \greencheck & & \greencheck \\  \midrule
    Acceleration? & \redx & \redx & \redx & \redx & \redx &  & \greencheck  \\ \bottomrule
    \end{tabular}}
    \end{minipage}
\end{table*}
where $|\delta(x,y)| \leq \Delta$ is some bounded noise. The Order Oracle has the capability to compare two functions; however, in contrast to the zero-order oracle, it lacks the ability to calculate or utilize the actual value of the objective function. This concept closely mirrors the challenges encountered in real-world black-box optimization problems.
\begin{wrapfigure}{r}{0.25\textwidth}
  \includegraphics[width=0.25\textwidth]{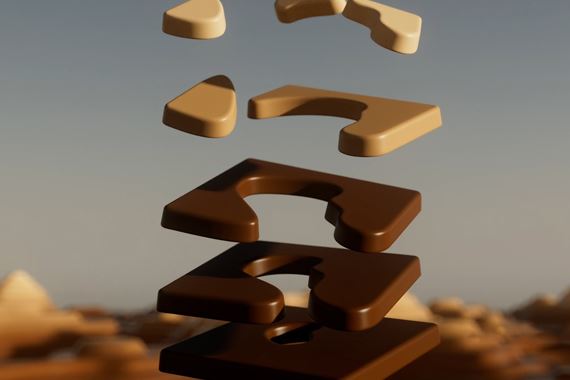}
  \caption{Valio’s chocolate}
  \label{fig:Applicaion}
  \vskip -0.15in
\end{wrapfigure}
The motivation for the proposed oracle concept becomes more evident when considering the ongoing developments in generative models. Companies like Valio and SberAI have already embraced the active involvement of AI in dessert creation. Notably, Valio, as illustrated in Figure~\ref{fig:Applicaion}\footnote{The picture is taken from the company's \href{https://www.valio.com/articles/future-proof-milk-chocolate-is-here-developed-by-valio-bettersweet-solution-and-an-ai/}{Official Website}. See more motivation in Appendix~\ref{app:Motivation}}, employed AI to determine the optimal concentration of milk for chocolate bars based on available data. However, envision a scenario where AI creates customized chocolate for an individual by adjusting the concentration of ingredients. In such a case, a flavor comparison procedure, depicted in one iteration, would involve determining the preference \textit{order}. Given that tastes can be closely aligned, introducing bounded noise in oracle \eqref{eq:Order_Oracle} mitigates the potential~for~errors.

To address the initial deterministic problem~\eqref{eq:init_problem} with the Order Oracle~\eqref{eq:Order_Oracle}, we propose a novel approach to algorithm design that uses class of coordinate descent methods (CD) \cite{Bubeck_2015} as an optimization “tool” to integrate our oracle into multidimensional optimization. This approach is good in that we use linear search (where the Order Oracle is directly used) to determine not only the iteration step size, but also the gradient coordinate of the objective function. Thus demonstrating that the proposed algorithms are also adaptive. And for solving the initial stochastic problem~\eqref{eq:init_problem} with the Order Oracle, we propose an approach based on normalized SGD, providing~asymptotic~convergence.

\subsection{Our contributions}
    More specifically, our contributions are the following:
    \begin{itemize}
        \item We provide a novel approach to design algorithms for solving deterministic optimization problems \eqref{eq:init_problem} with Order Oracle \eqref{eq:Order_Oracle} that achieves SOTA convergence results up to logarithm factor in the non-convex, convex, and strongly convex settings (see Table~\ref{tab:table_compare} and Algorithm~\ref{algo: OrderRCD}). 

        \item By using the approach proposed in this paper to create algorithms for solving deterministic optimization problem \eqref{eq:init_problem} with the Order Oracle \eqref{eq:Order_Oracle}, we have shown, on an example of strongly convex functions, that acceleration in such an oracle concept exists~(see~Algorithm~\ref{algo: OrderACDM}~and~Theorem~\ref{th:OrderACDM}). Moreover, we have shown how the convergence results of the accelerated algorithm can be improved when the problem is low-dimensional (the algorithm described in Appendix~\ref{app:privet_comm} shows convergence that even the Ellipsoid method cannot).

        \item We provide the first algorithm for solving a problem \eqref{eq:init_problem} with the stochastic Order Oracle concept, where the \textit{order} between two functions on~the~same~realization~is~determined.

        \item Through numerical experiments (see Section~\ref{sec:Experiments}), we validate our theoretical results by comparing with first-order algorithms, as well as providing practical recommendations for implementing the first accelerated algorithm with the Order Oracle~\eqref{eq:Order_Oracle} (see Algorithm~\ref{algo: OrderACDM}). 
    \end{itemize}

    \subsection{Main assumptions and notations}
    Before discussing related works, we present the notation and main assumptions we use in our work.
    \vspace{-0.8em}
    \paragraph{Notation.} We use $\dotprod{x}{y}:= \sum_{i=1}^{d} x_i y_i$ to denote standard inner product of $x,y \in \mathbb{R}^d$. We denote Euclidean norm in $\mathbb{R}^d$ as $\| x\| := \sqrt{ \sum_{i=1}^d x_i^2}$. In particular, this norm ${\| x \| := \sqrt{\dotprod{x}{x}}}$ is related to the inner product. We use $\ee_i \in \mathbb{R}^d$ to denote the $i$-th unit vector. We define the norms ${\| x \|_{[\alpha]} := \sqrt{ \sum_{i=1}^d L_i^\alpha x_i^2}}$ and ${\| x \|_{[\alpha]}^* := \sqrt{ \sum_{i=1}^d \frac{1}{L_i^\alpha} x_i^2}}$. We denote by $\nabla f(x)$ the full gradient of function $f$ at point $x \in \mathbb{R}^d$, and by $\nabla_{i} f(x)$ the $i$-th coordinate gradient. The sum of constant $L_i$ denotes as $S_\alpha := \sum_{i}^d L_i^\alpha$. We use $S^d(r):=\left\{ x \in \mathbb{R}^d : \| x \| = r \right\}$ to denote~Euclidean~sphere. We use $\tilde{O} (\cdot)$ to hide the logarithmic coefficients. We denote $f^* := f(x^*)$ as the~solution~to~initial~problem.

    For all our theoretical results, we assume that $f(x)$ is $L_i$-smooth with respect to its $i$-th coordinate:
    \begin{assumption}[Smoothness]
    \label{ass:smooth}
        A function $f: \mathbb{R}^d \rightarrow \mathbb{R}$ is $L$-coordinate-Lipschitz for $L_1, L_2, ..., L_d >0$ if  for any $i \in [d], x \in \mathbb{R}^d$ and $h \in \mathbb{R}$ the following inequality holds:
        \begin{equation*}
            | \nabla_{i} f(x + h \ee_i) - \nabla_{i} f(x)| \leq L_i |h|.
        \end{equation*}
    \end{assumption}
    The smoothness assumption of the function is widely used in the optimization literature \citep[e.g.][]{Boyd_2004, Nesterov_2018}. However, Assumption~\ref{ass:smooth} is specific and frequently utilized in the context of optimization via the CD method \cite{Lin_2014, Zhang_2017, Mangold_2023}. This assumption means that for every input point $x$, if we alter its $i$-th coordinate by at most $h$, then the corresponding gradient $\nabla_{i} f(x + h \ee_i)$ differs~from~$\nabla_{i} f(x)$~by~at~most~$L_i$~times~$|h|$.
    
    Throughout this paper we assume that function $f(x)$ can be (strongly) convex~w.r.t.~the~norm~$\norms{\cdot}$:
    \begin{assumption}
    \label{ass:sc} 
    A function $f: \mathbb{R}^d \rightarrow \mathbb{R}$ is $\mualpha \geq 0$ strongly convex w.r.t. the norm $\norms{\cdot}$ if for any ${x,y \in \mathbb{R}^d}$ the following inequality holds:
        \begin{equation*}
            f(y) \geq f(x) + \dotprod{\nabla f(x)}{y - x} + \frac{\mualpha}{2} \norms{y - x}^2.
        \end{equation*}
    \end{assumption}
    Many works in convex optimization rely on the (strong) convexity assumption, as evidenced by references such as \cite{Duchi_2016, Shi_2019, Asi_2021}. Since our work employs norms $\norms{\cdot}$ to derive theoretical estimates, Assumption~\ref{ass:sc} deviates slightly from the standard. However, this assumption of $\mualpha$ (strong) convexity is extensively used in the following literature: \cite{Nesterov_2012, Lee_2013, Allen_2016}. Assumption~\ref{ass:sc} can conform to standard form of $\mu$ (strong) convexity via Euclidean norm if $\alpha = 1$. This assumption~is~convex~provided~$\mualpha = 0$.

    \subsection{Paper organization} 
    Further, this paper has the following structure. In Section~\ref{sec:related_works}, we provide a related work discussion. We present and analyze the non-accelerated algorithm in Section~\ref{sec:non_acc_methods}. Then in Section~\ref{sec:acc_method}, we discuss the feasibility of accelerating the proposed algorithm in the concept of Order Oracle and provide theoretical guarantees. We provide the first algorithm that utilizes the stochastic concept of the Order Oracle in Section~\ref{sec:Stochastic Order Oracle concept}. In Section~\ref{sec:discussion}, we discuss the current results of this work. Through numerical experiments in Section~\ref{sec:Experiments}, we validate the theoretical results. While Section~\ref{sec:Conclusion} concludes the paper. 

\section{Related Works}
\label{sec:related_works}
The literature in the field of optimization is already extensive and continuously expanding, encompassing various problem formulations and assumptions. In this section, we provide an overview of the most relevant contributions to our work. Namely both CD methods and algorithms with~Order~Oracle.

\vspace{-1em}
\paragraph{Algorithms with Order Oracle.} In the field of black-box optimization problem, special attention has been paid to algorithms that use only the Order Oracle. For example, Stochastic Three Points Method was proposed in \cite{Bergou_2020}, which uses an oracle that compares three function values at once and achieves the following oracle complexity in the strongly convex case $\Obound{d L / \mu \log 1/\varepsilon}$. A little later, the authors of \cite{Gorbunov_2019} modified the Stochastic Three Points Method to the case of importance sampling, improving the oracle complexity estimate in the strongly convex case $\Obound{S_{\alpha} / \mu \log 1/\varepsilon}$. Already in 2021, \cite{Saha_2021} provided another algorithm in which the oracle compares two function values only once per iteration. The analysis of this algorithm is based on the Sign SGD and achieves oracle complexity in the strongly convex case $\Obound{d L / \mu \log 1/\varepsilon}$. The work of \cite{Tang_2023} showed that the Order Oracle is also extensively used in \textit{Reinforcement Learning with Human Feedback}, providing only in the non-convex case an estimate on oracle complexity $\Obound{d/\varepsilon^2}$. In this paper, we propose an alternative approach for creating optimization algorithms (via line search method), which achieves SOTA convergence results with logarithm accuracy in a class of non-accelerated algorithms, and provide the first accelerated algorithm with the Order Oracle.
\vspace{-1em}
\paragraph{Coordinate descent (CD) methods.} In addition to full-gradient algorithms for smooth first-order optimization, coordinate descent algorithms are categorized into accelerated and non-accelerated algorithms. Non-accelerated algorithms typically converge at rates $1/\varepsilon$ and $1/\mu$ in convex ($\mu=0$) and strongly convex ($\mu > 0$) cases, respectively, while accelerated algorithms achieve rates $1/\sqrt{\varepsilon}$ and $1/\sqrt{\mu}$ in convex and strongly convex cases, respectively. This classification dates back to 1983 when Nesterov introduced the optimal convergence rates for first-order algorithms \cite{Nesterov_1983}. However, the fundamental difference between coordinate descent and full-gradient descent lies in the step taken along the $i$-th coordinate of the gradient (directional derivative). Monograph \cite{Bubeck_2015} has demonstrated that if the direction is chosen uniformly, coordinate descent may require up to $d$ times more iterations than full-gradient descent. However, authors in \cite{Nesterov_2012} have shown that considering smoothness along the direction $L_i$ can improve the number of iterations $ \Obound{S_\alpha R^2_{[1-\alpha]} / \varepsilon}$ and $\Obound{S_\alpha / \mualpha  \log \frac{1}{\varepsilon}}$ in convex and strongly convex cases, respectively, where $R_{[1-\alpha]} = \sup_{x \in \mathbb{R}^d: f(x) \leq f(x_0)} \norms{x - x^*}$ for $\alpha \in [0,1]$. In the same paper \cite{Nesterov_2012}, the authors demonstrated the potential for acceleration in coordinate descent through the scheme proposed in \cite{Nesterov_1983}. Subsequently, in \cite{Lee_2013}, they analyzed and presented an accelerated version of coordinate descent (ACDM), along with the corresponding number of required $\Obound{\sqrt{d S_{\alpha} R^2_{[1-\alpha]} / \varepsilon}}$ and $\Obound{\sqrt{d S_{\alpha} / \mualpha} \log \frac{1}{\varepsilon}}$ iterations in convex and strongly convex cases, respectively. It is noteworthy that, at that time, this iteration complexity was deemed unimprovable. However, a few years later, both \cite{Nesterov_2017} and \cite{Allen_2016} independently provided same results, demonstrating that it is indeed possible to enhance the iteration complexity for accelerated coordinate descent methods by modifying the probability of choosing the $i$-th coordinate: $L_i^{\alpha/2}/S_{\alpha/2}$. In this paper, we propose a coordinate descent algorithm with an Order Oracle \eqref{eq:Order_Oracle}, demonstrating that it achieves the same iteration complexity as first-order algorithms. Furthermore, based on accelerated coordinate descent \cite{Nesterov_2017}, we establish on the strongly convex case that even with an Order Oracle, acceleration can be attained, resulting in the most favorable estimates on iteration complexity~known~to~date.

\section{Non-Accelerated Methods} \label{sec:non_acc_methods}
In this section, we begin to present our main results, by introducing algorithms tailored to address the initial problem \eqref{eq:init_problem} utilizing the Order Oracle \eqref{eq:Order_Oracle}. Given the demand for this oracle concept, we furnish convergence guarantees and conduct comparisons with algorithms employing~alternative~oracles.

Our approach for developing a new algorithm involves incorporating the Order Oracle into an existing optimization method using linear search. We opt for linear search as the integration tool due to its compatibility with the Order Oracle concept. However, the choice of optimization method to which this linear search can be applied poses a crucial question. After careful consideration, we determined that the \textit{coordinate descent method} (CDM) is the most suitable candidate. In each iteration, the CDM, given a step size $\zeta_k > 0$ and starting point $x_0 \in \mathbb{R}^d$, proceeds~as~follows:
\begin{equation}
    \label{eq:CDM}
    x_{k+1} = x_k - \underbrace{\zeta_k \nabla_{i_k} f(x_k)}_{\eta_k} \ee_{i_k},
\end{equation}
where $i_k$ is the coordinate index drawn from $[d]$. Note that the coordinate descent method step \eqref{eq:CDM} requires both a step size $\zeta_k$ and a gradient coordinate value $\nabla_{i_k} f(x_k)$, which are scalars. Thus, by employing linear search ${\eta_k = \argmin_{\eta} \{f(x_k + \eta \ee_{i_k})\}}$ at each iteration, we can optimally determine both the direction of steepest descent and the step size to traverse along this direction, resulting in a fully adaptive algorithm. The next consideration is the strategy for coordinate selection at each iteration. Following the trend initiated by \cite{Nesterov_2012}, we use a more general sampling distribution than the uniform one, obtaining already \textit{random coordinate descent} (RCD). Specifically, for $\alpha > 0$, we assume that a random generator $\mathcal{R}_\alpha (L)$ independently selects $i_k$ from~the~following~distribution:
\begin{equation}
    \label{eq:distibution}
    p_\alpha (i) = L_i^\alpha/S_\alpha, \quad i \in [d].
\end{equation}

We are now ready to introduce a method designed to solve problem \eqref{eq:init_problem} utilizing the oracle concept~\eqref{eq:Order_Oracle}.

\begin{minipage}{0.4\textwidth}
         This algorithm falls under the category of coordinate methods and is named \textit{random coordinate descent with order oracle} (OrderRCD), see Algorithm~\ref{algo: OrderRCD}. It should be noted that the inherent "stochasticity" of problem~\eqref{eq:init_problem} is artificially induced by randomized procedure used to select the $i$-th coordinate \eqref{eq:distibution}. Here, $\xi$ denotes the $i$-th coordinate, and $\mathcal{D}$ represents distribution $p_\alpha(i)$ from \eqref{eq:distibution}. The \textit{golden ratio~method}~(GRM) serves 
    \end{minipage}
    \begin{minipage}{0.2\textwidth}
    \end{minipage}
    \begin{minipage}{0.58\textwidth}
    \vspace{-1em}
    \begin{algorithm}[H]
       \caption{Random Coordinate Descent with Order Oracle (OrderRCD)}
       \label{algo: OrderRCD}
    \begin{algorithmic}
       \STATE {\bfseries Input:} $x_0 \in \mathbb{R}^d$, random generator $\mathcal{R}_\alpha(L)$
       
       \FOR{$k=0$ {\bfseries to} $N-1$}
       \STATE {\hspace{-0.7em}1.}~~~choose active coordinate $i_k = \mathcal{R}_\alpha(L)$
       \STATE {\hspace{-0.7em}2.}~~~compute $\eta_k = \text{argmin}_{\eta} \{ f(x_k + \eta \ee_{i_k})\}$~via~(GRM)
       \STATE {\hspace{-0.7em}3.}~~~$x_{k+1} \gets x_{k} + \eta_{k} \ee_{i_k}$
       \ENDFOR
       \STATE {\bfseries Return:} $x_{N}$
    \end{algorithmic}
    \end{algorithm}
    \end{minipage}
    
\vspace{-0.5em}
as the linear search algorithm, which is where the Order Oracle is used. It is known that GRM (which is described in Appendix~\ref{sec:Appendix_GRM}) requires $N = \Obound{\log 1/\epsilon}$ iterations to achieve the desired accuracy $\epsilon$ (in terms of function) of the solution to the linear search problem (namely, $\eta_k = \text{argmin}_{\eta} \{ f(x_k + \eta \ee_{i_k})\}$). 

Next, we present our theoretical results, demonstrating through convergence analyses that random coordinate descent with order oracle (OrderRCD) exhibits competitive iteration complexity compared to \textit{first-order algorithms} when applied to \textit{non-convex}, \textit{convex}, or \textit{strongly convex} functions.

\subsection{Non-convex setting}
    \begin{theorem}[non-convex] \label{th:non_convex}
        Let function f(x) satisfies Assumption \ref{ass:smooth}, $N$ is the number of iterations, $F_0 = f(x_0) - f(x^*)$, then Algorithm \ref{algo: OrderRCD} (OrderRCD) with oracle \eqref{eq:Order_Oracle} guarantees an error:
        \begin{equation*}
            \frac{1}{N} \sum_{k = 0}^{N-1} \left(\norms{\nabla f(x_k)}^*\right)^2 \leq \Obound{\frac{S_\alpha F_0}{N} + S_\alpha \epsilon + S_\alpha \Phi \Delta},
        \end{equation*}
        where $\epsilon$ is the accuracy of solving linear search problem by function and $\Phi = \frac{1 + \sqrt{5}}{2}$ is golden ratio.
    \end{theorem}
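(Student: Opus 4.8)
The plan is to run the textbook randomized-coordinate-descent telescoping argument, with the only new ingredient being that exact coordinate minimization is replaced by the inexact, noisy line search carried out by the golden ratio method (GRM), so that each step picks up additive errors of order $\epsilon$ and $\Phi\Delta$.

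\textbf{Step 1 (one-step progress).} Fix $x_k$ and the sampled coordinate $i_k$. By Assumption~\ref{ass:smooth} the slice $\eta \mapsto f(x_k + \eta\ee_{i_k})$ is $L_{i_k}$-smooth, so
\[
f(x_k + \eta\ee_{i_k}) \le f(x_k) + \eta\,\nabla_{i_k}f(x_k) + \tfrac{L_{i_k}}{2}\eta^2 ,
\]
and minimizing the right-hand side over $\eta$ (attained at $\eta = -\nabla_{i_k}f(x_k)/L_{i_k}$) gives $\min_\eta f(x_k+\eta\ee_{i_k}) \le f(x_k) - \tfrac{1}{2L_{i_k}}(\nabla_{i_k}f(x_k))^2$. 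I then invoke the GRM guarantee (Appendix~\ref{sec:Appendix_GRM}): after $\Obound{\log 1/\epsilon}$ comparisons with the noisy Order Oracle~\eqref{eq:Order_Oracle} the returned point obeys $f(x_{k+1}) = f(x_k+\eta_k\ee_{i_k}) \le \min_\eta f(x_k+\eta\ee_{i_k}) + \epsilon + c\,\Phi\Delta$, the $\Phi\Delta$ term arising because a comparison corrupted by noise $\le\Delta$ can make GRM discard the sub-interval holding the true minimizer, which by the $1/\Phi$ contraction ratio costs only $O(\Phi\Delta)$ in function value. Combining,
\[
f(x_{k+1}) \le f(x_k) - \tfrac{1}{2L_{i_k}}\big(\nabla_{i_k}f(x_k)\big)^2 + \epsilon + c\,\Phi\Delta .
\]

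\textbf{Step 2 (expectation and telescoping).} Taking expectation over $i_k\sim p_\alpha$ (with $p_\alpha(i)=L_i^\alpha/S_\alpha$) in the one-step bound, the weighted sum collapses to the dual norm:
\[
\sum_{i=1}^d \frac{L_i^\alpha}{S_\alpha}\cdot\frac{(\nabla_i f(x_k))^2}{2L_i}
= \frac{1}{2S_\alpha}\sum_{i=1}^d \frac{(\nabla_i f(x_k))^2}{L_i^{1-\alpha}}
= \frac{1}{2S_\alpha}\big(\norms{\nabla f(x_k)}^*\big)^2 ,
\]
which is exactly where $\|\cdot\|_{[1-\alpha]}^*$ enters; hence $\E_{i_k}[f(x_{k+1})] \le f(x_k) - \frac{1}{2S_\alpha}(\norms{\nabla f(x_k)}^*)^2 + \epsilon + c\Phi\Delta$. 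Rearranging, taking total expectation, and summing over $k=0,\dots,N-1$ telescopes the $f$-terms; bounding $f(x_0)-\E[f(x_N)]\le f(x_0)-f^* = F_0$ and dividing by $N$ (then multiplying by $2S_\alpha$) gives
\[
\frac1N\sum_{k=0}^{N-1}\E\big[(\norms{\nabla f(x_k)}^*)^2\big]
\le \frac{2S_\alpha F_0}{N} + 2S_\alpha\epsilon + 2cS_\alpha\Phi\Delta
= \Obound{\frac{S_\alpha F_0}{N} + S_\alpha\epsilon + S_\alpha\Phi\Delta}.
\]

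The main obstacle is Step~1's line-search lemma: the downstream telescoping is routine, so the real work is to establish (or cite precisely from the GRM appendix) that bounded comparison noise $\Delta$ degrades the line-search output by only $O(\Phi\Delta)$ in function value — in particular tracking that the golden-ratio contraction factor is what produces the constant $\Phi$ — and that $\Obound{\log 1/\epsilon}$ GRM iterations suffice to reach the clean accuracy $\epsilon$ on the $L_{i_k}$-smooth one-dimensional slice. A secondary point is that the statement is implicitly in expectation over the coordinate sampling, which should be made explicit when reporting the bound.
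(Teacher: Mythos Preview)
Your proposal is correct and follows essentially the same route as the paper: invoke the GRM guarantee \eqref{eq:coll_with_noise} to obtain the per-step coordinate descent inequality with additive $\epsilon + c\Phi\Delta$ error, take expectation over $i_k\sim p_\alpha$ to produce the $(\norms{\nabla f(x_k)}^*)^2/(2S_\alpha)$ term, then telescope and divide by $N$. Your observation that the bound is in expectation over the coordinate sampling is also apt, as the paper's statement leaves this implicit.
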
   
    The convergence results of Theorem~\ref{th:non_convex} imply the existence of a point $k \in [N]$ where $( \norms{\nabla f(x_k)}^*)^2 \leq \Obound{\frac{S_\alpha F_0}{N}}$ holds true. Additionally, Theorem~\ref{th:non_convex} indicate that, to achieve the desired accuracy $\varepsilon$ (according to the gradient norm), Algorithm~\ref{algo: OrderRCD} requires $N = \Obound{S_\alpha F_0 / \varepsilon^2}$ iterations and $T = \OboundTilde{S_\alpha F_0 / \varepsilon^2}$ calls to the oracle~\eqref{eq:Order_Oracle}. The maximum admissible noise level ensuring the desired accuracy should not exceed $\sim \varepsilon^2$. Notably, the convergence rate of random coordinate descent with the order oracle (OrderRCD), assuming minimal noise ($\Delta \leq \varepsilon^2$), is equal to \textit{first-order method}: random coordinate descent (RCD). It is also characteristic (within the coordinate methods class) to be inferior to \textit{gradient descent} \citep[][under $L$ smoothness assumption]{Ghadimi_2013}. For instance, in the case when $\alpha = 1$, the convergence rates $\Obound{L F_0 / N}$ is better because $L \leq \sum_{i = 1}^d L_i$. Not surprisingly, in terms of oracle complexity, Algorithm \eqref{algo: OrderRCD} is logarithmically inferior to first-order algorithms. This discrepancy reflects the "cost" associated with employing GRM, where the $\tilde{O} (\cdot)$ to hide the \textit{logarithmic} \textit{coefficient} representing the number of oracle calls in the GRM, \textit{contingent~upon~its~accuracy~$\epsilon$}. A detailed proof of Theorem \ref{th:non_convex} is given in Appendix~\ref{app:appendix_nonconvex}.

\subsection{Convex setting}
    We now prove an analogous theorem on the convergence of the algorithm when the function is convex, i.e., additionally assuming that Assumption \ref{ass:sc} is satisfied with $\mualpha = 0$.

    \begin{theorem}[convex]
        \label{th:convex}
        Let function $f(x)$ satisfies Assumption~\ref{ass:smooth} ($L$-Smoothness) and Assumption~\ref{ass:sc} (convexity, $\mualpha = 0$), then Algorithm \ref{algo: OrderRCD} (OrderRCD) with oracle \eqref{eq:Order_Oracle} guarantees~an~error:
        \begin{equation*}
            \expect{f(x_N)} - f(x^*) \leq \Obound{\frac{S_\alpha R^2_{[1-\alpha]}}{N} + \frac{2 S_\alpha R^2_{[1-\alpha]} \left( \epsilon + \Phi \Delta \right)}{F_{N-1}}},
        \end{equation*}
        where $\epsilon$ is an inner problem accuracy, ${R_{[1-\alpha]} = \sup_{x \in \mathbb{R}^d: f(x) \leq f(x_0)} \norms{x - x^*}}$, and $\Phi = \frac{1 + \sqrt{5}}{2}$.
    \end{theorem}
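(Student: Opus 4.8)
The plan is to mimic the classical convergence analysis of random coordinate descent (RCD) in the convex case, but to replace the exact gradient-step decrease by the decrease obtained from the inexact line search with the Order Oracle. First I would establish a \textbf{per-step progress lemma}: if $i_k$ is the active coordinate and $\eta_k$ is the (approximate) minimizer of $\eta \mapsto f(x_k + \eta \ee_{i_k})$ returned by GRM with accuracy $\epsilon$ under noise $\Delta$, then
\begin{equation*}
    f(x_{k+1}) \leq f(x_k) - \frac{1}{2L_{i_k}} \left( \nabla_{i_k} f(x_k) \right)^2 + \epsilon + \Phi \Delta .
\end{equation*}
This follows because the $L_{i_k}$-coordinate-smoothness (Assumption~\ref{ass:smooth}) gives the usual quadratic upper bound, whose exact one-dimensional minimization yields the $\frac{1}{2L_{i_k}}(\nabla_{i_k} f(x_k))^2$ decrease; the GRM only finds an $\epsilon$-suboptimal point of the true 1-D problem, and the sign noise $\delta$ in the Order Oracle can corrupt the comparisons used inside GRM by at most $\Phi\Delta$ (the golden-ratio factor coming from how the bracket shrinks). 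This is essentially the same lemma underlying Theorem~\ref{th:non_convex}, so I would reuse it.

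Next I would take conditional expectation over $i_k \sim p_\alpha$ as in \eqref{eq:distibution}. Since $\E_{i_k}\left[\frac{1}{2L_{i_k}}(\nabla_{i_k} f(x_k))^2\right] = \frac{1}{2 S_\alpha}\sum_{i=1}^d \frac{L_i^\alpha}{L_i}(\nabla_i f(x_k))^2 = \frac{1}{2 S_\alpha}\left(\norms{\nabla f(x_k)}^*\right)^2$, I get the expected one-step recursion
\begin{equation*}
    \E[f(x_{k+1})] \leq \E[f(x_k)] - \frac{1}{2 S_\alpha}\, \E\!\left[\left(\norms{\nabla f(x_k)}^*\right)^2\right] + \epsilon + \Phi\Delta .
\end{equation*}
Then I would convert the gradient-norm term into a contraction on the function gap using convexity: from Assumption~\ref{ass:sc} with $\mualpha = 0$ and the dual-norm Cauchy–Schwarz inequality $\dotprod{\nabla f(x_k)}{x_k - x^*} \leq \norms{\nabla f(x_k)}^* \cdot \norms{x_k - x^*} \leq \norms{\nabla f(x_k)}^*\, R_{[1-\alpha]}$, valid because all iterates stay in the sublevel set $\{f \leq f(x_0)\}$ (the per-step lemma with small enough $\epsilon,\Delta$, or more carefully by absorbing the error terms, keeps $f$ non-increasing up to the additive perturbation), one obtains $f(x_k) - f^* \leq \norms{\nabla f(x_k)}^* R_{[1-\alpha]}$, i.e. $\left(\norms{\nabla f(x_k)}^*\right)^2 \geq (f(x_k)-f^*)^2 / R_{[1-\alpha]}^2$.

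Finally, writing $\Lambda_k := \E[f(x_k)] - f^*$, the recursion becomes $\Lambda_{k+1} \leq \Lambda_k - \frac{\Lambda_k^2}{2 S_\alpha R_{[1-\alpha]}^2} + (\epsilon + \Phi\Delta)$ (using Jensen to pass from $\E[\Lambda_k^2]$ to $\Lambda_k^2$ is in the wrong direction, so I would instead keep $\E[(f(x_k)-f^*)^2]\ge (\E[f(x_k)-f^*])^2$ and be slightly careful, or telescope the $\frac{\Lambda_k^2}{2S_\alpha R^2}$ terms and bound $\sum \Lambda_k^2 \geq N \Lambda_N^2$ using monotonicity of $\Lambda_k$). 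Dividing the telescoped inequality $\sum_{k=0}^{N-1}\frac{\Lambda_k \Lambda_{k+1}}{2 S_\alpha R_{[1-\alpha]}^2} \lesssim \Lambda_0 + N(\epsilon+\Phi\Delta)$ — or more cleanly dividing through by $\Lambda_k\Lambda_{k+1}$ as in the standard $1/k$ analysis — yields $\Lambda_N \leq \Obound{\frac{S_\alpha R_{[1-\alpha]}^2}{N}} + \text{(noise term)}$, and tracking the noise term through the division by $\Lambda_{N-1} = F_{N-1}$ produces the stated $\frac{2 S_\alpha R_{[1-\alpha]}^2(\epsilon + \Phi\Delta)}{F_{N-1}}$ correction. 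The main obstacle is the bookkeeping of the two error sources ($\epsilon$ from GRM suboptimality, $\Phi\Delta$ from sign-noise inside the line search) through the nonlinear $1/N$ recursion while ensuring the iterates genuinely remain in the sublevel set so that $R_{[1-\alpha]}$ is a legitimate bound on $\norms{x_k - x^*}$; everything else is a routine adaptation of Nesterov's RCD argument.
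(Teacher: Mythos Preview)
Your proposal is correct and follows essentially the same route as the paper's proof: the per-step decrease lemma via coordinate smoothness plus the GRM error $\epsilon + c\Phi\Delta$, the expectation over $i_k\sim p_\alpha$ yielding the $\frac{1}{2S_\alpha}(\norms{\nabla f(x_k)}^*)^2$ term, the convexity/Cauchy--Schwarz step $F_k \le R_{[1-\alpha]}\norms{\nabla f(x_k)}^*$, and then the standard trick of dividing the recursion $F_{k+1}\le F_k - \frac{F_k^2}{2S_\alpha R_{[1-\alpha]}^2} + (\epsilon+c\Phi\Delta)$ by $F_kF_{k+1}$ and telescoping, with the noise term picking up the $1/F_{N-1}$ factor. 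The two concerns you flag (Jensen direction and iterates staying in the sublevel set) are handled in the paper exactly as loosely as you suspect, so you are not missing anything.
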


    In comparison to Theorem~\ref{th:non_convex} (non-convex setting), the convergence results of Algorithm~\ref{algo: OrderRCD} demonstrate improvement under the assumption of function convexity (Assumption~\ref{ass:sc} with $\mualpha = 0$). Specifically, according to Theorem \ref{th:convex}, random coordinate descent with order oracle (OrderRCD) requires $N = \mathcal{O} (S_\alpha R^2_{[1-\alpha]} / \varepsilon)$ iterations and $T = \tilde{\mathcal{O}} (S_\alpha R^2_{[1-\alpha]} / \varepsilon)$ oracle calls to achieve the desired accuracy $\varepsilon$ (where $\expect{f(x_{N})} - f(x^*) \leq \varepsilon$). However, in cases where $\Delta > 0$, the condition for maximum noise remains unchanged compared to the non-convex setting. Furthermore, the iteration complexity $N$ aligns with that of the first-order algorithm,  \citep[\textit{random coordinate descent} (RCD)][]{Nesterov_2012}. Similarly, akin to Theorem \ref{th:convex}, the convergence rate of \textit{gradient descent} \citep[][under assuming $L$ smoothness and convexity]{Nesterov_2018} outperforms that of Algorithm~\ref{algo: OrderRCD}. Moreover, when $\alpha = 0$ (corresponding to a \textit{uniform distribution} with probability $p_{\alpha} (i) = 1/d$), OrderRCD (like all coordinate methods) necessitates $d$ times more iterations than gradient descent. Concerning oracle complexity $T$, a logarithmic coefficient is evident, correlating with the number of oracle calls per iteration of Algorithm~\ref{algo: OrderRCD}, line 2. For a comprehensive proof of Theorem~\ref{th:convex}, refer to Appendix~\ref{app:appendix_convex}.

\subsection{Strongly convex setting}
    \label{subsec:sc}
    In this section we consider case when function is strongly convex (see Assumption~\ref{ass:sc},~${\mualpha > 0}$).    

    \begin{theorem}[strongly convex]
    \label{th:sc}
        Let function $f(x)$ satisfies Assumption~\ref{ass:smooth} ($L$-Smoothness) and Assumption~\ref{ass:sc} (convexity, $\mualpha > 0$), then Algorithm~\ref{algo: OrderRCD} with oracle \eqref{eq:Order_Oracle} has a linear convergence~rate: 
        \begin{equation*}
            \expect{f(x_N)} - f(x^*) \leq \left( 1 - \frac{\mualpha}{S_{\alpha}} \right)^N F_0 + \frac{2 S_\alpha \epsilon}{\mualpha} + \frac{2 c S_\alpha \Phi \Delta}{\mualpha}, 
        \end{equation*}
        where $c$ is some constant, $\epsilon$ is the GRM accuracy (by function) and $\Phi = \frac{1 + \sqrt{5}}{2}$ is golden ratio.
    \end{theorem}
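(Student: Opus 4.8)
The plan is to mirror the analysis of Theorem~\ref{th:convex}, but now exploit strong convexity (Assumption~\ref{ass:sc} with $\mualpha > 0$) to convert the "$1/N$" progress into a geometric contraction. First I would record the one-step descent guarantee coming from the golden ratio line search: since $\eta_k$ approximately solves $\min_\eta f(x_k + \eta \ee_{i_k})$ up to accuracy $\epsilon$ in function value, and the Order Oracle comparisons are corrupted by noise $|\delta|\le\Delta$, one gets
\begin{equation*}
    f(x_{k+1}) \leq f(x_k) - \frac{1}{2L_{i_k}}\left(\nabla_{i_k} f(x_k)\right)^2 + \epsilon + c\,\Phi\,\Delta,
\end{equation*}
where the $\tfrac{1}{2L_{i_k}}(\nabla_{i_k}f(x_k))^2$ term is the exact-line-search progress for an $L_{i_k}$-smooth univariate slice (Assumption~\ref{ass:smooth}), the $\epsilon$ accounts for inexactness of GRM, and the $\Phi\Delta$ term (with some absolute constant $c$) is the bias GRM can incur because the oracle may flip the sign of a comparison when the two function values differ by less than $2\Delta$ — this is exactly the same bookkeeping already used in Theorems~\ref{th:non_convex} and \ref{th:convex}, so I would reuse it verbatim.

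Next I would take the expectation over $i_k \sim p_\alpha$ from \eqref{eq:distibution}. Because $p_\alpha(i) = L_i^\alpha/S_\alpha$, we have $\EEb{i_k}{\tfrac{1}{2L_{i_k}}(\nabla_{i_k}f(x_k))^2} = \tfrac{1}{2S_\alpha}\sum_i L_i^{\alpha-1}(\nabla_i f(x_k))^2 = \tfrac{1}{2S_\alpha}\left(\norms{\nabla f(x_k)}^*\right)^2$, using the dual norm $\|\cdot\|_{[1-\alpha]}^*$ defined via weights $1/L_i^{1-\alpha}$. So conditionally on $x_k$,
\begin{equation*}
    \Eb{f(x_{k+1})} \leq f(x_k) - \frac{1}{2S_\alpha}\left(\norms{\nabla f(x_k)}^*\right)^2 + \epsilon + c\,\Phi\,\Delta.
\end{equation*}
Then I invoke strong convexity in the form of the standard gradient-domination (Polyak–Łojasiewicz) inequality that follows from Assumption~\ref{ass:sc}: minimizing $f(y)\ge f(x)+\langle\nabla f(x),y-x\rangle+\tfrac{\mualpha}{2}\norms{y-x}^2$ over $y$ gives $f(x^*)\ge f(x) - \tfrac{1}{2\mualpha}\left(\norms{\nabla f(x)}^*\right)^2$, i.e. $\left(\norms{\nabla f(x_k)}^*\right)^2 \ge 2\mualpha\,(f(x_k)-f^*)$. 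Substituting and subtracting $f^*$ from both sides yields
\begin{equation*}
    \Eb{f(x_{k+1})} - f^* \leq \left(1 - \frac{\mualpha}{S_\alpha}\right)\left(f(x_k) - f^*\right) + \epsilon + c\,\Phi\,\Delta.
\end{equation*}

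Finally I would unroll this recursion over $k = 0,\dots,N-1$, taking total expectations: the homogeneous part contracts as $(1-\mualpha/S_\alpha)^N F_0$, and the constant perturbation $\epsilon + c\Phi\Delta$ accumulates as a geometric series bounded by $\tfrac{S_\alpha}{\mualpha}(\epsilon + c\Phi\Delta)$, which after absorbing constants gives the claimed bound $\tfrac{2S_\alpha\epsilon}{\mualpha} + \tfrac{2cS_\alpha\Phi\Delta}{\mualpha}$. The main obstacle — and the only place real care is needed — is the one-step line-search inequality with the noisy Order Oracle: one must argue that GRM, driven only by sign comparisons \eqref{eq:Order_Oracle}, still locates a point whose function value is within $O(\epsilon + \Phi\Delta)$ of the true univariate minimum, so that the "$-\tfrac{1}{2L_{i_k}}(\nabla_{i_k}f)^2$" progress is realized up to that additive slack; quantifying how the $\Delta$-noise propagates through the golden-ratio bracketing (hence the appearance of $\Phi$ and the constant $c$) is the delicate step, and I would isolate it as a lemma on GRM in the appendix and then plug it in mechanically here.
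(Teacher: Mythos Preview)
Your proposal is correct and follows essentially the same route as the paper's proof: the one-step line-search descent inequality \eqref{eq:coll_with_noise} combined with coordinate smoothness, expectation over $i_k\sim p_\alpha$ to obtain the $\frac{1}{2S_\alpha}(\norms{\nabla f(x_k)}^*)^2$ term, the PL-type inequality derived from strong convexity, and then unrolling the geometric recursion. The only cosmetic difference is that you derive PL by minimizing the strong-convexity lower bound over $y$ (yielding the sharp factor $2\mualpha$), whereas the paper reaches it via Cauchy--Schwarz and maximizing over $\norms{x_k-x^*}$; your version in fact matches the contraction factor $1-\mualpha/S_\alpha$ in the theorem statement exactly.
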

    As depicted in Theorem \ref{th:sc} Algorithm \ref{algo: OrderRCD} exhibits a linear convergence rate, achieving the desired accuracy $\varepsilon$ in $N = \Obound{S_{\alpha}/\mualpha \log \frac{1}{\varepsilon}}$ iterations and $T = \OboundTilde{S_{\alpha}/\mualpha \log \frac{1}{\varepsilon}}$ oracle calls. Furthermore, there's an enhancement in the maximum noise level $\Delta$, reaching~${\sim \mualpha} \varepsilon$. It's notable that a weaker strong convexity condition was employed in the proof of Theorem~\ref{th:sc} (see Appendix~\ref{app:apendix_sc}):
    \begin{equation}\label{eq:PL}
        \norms{\nabla f(x)}^2 \geq 2 \mualpha (f(x) - f(x^*)), \forall x \in \mathbb{R}^d.
    \end{equation}
    This condition (in the case $\alpha = 1$), also known as the Polyak–Lojasiewicz or Gradient-dominated functions condition \cite{Polyak_1963, Lojasiewicz_1963, Karimi_2016, Belkin_2021}, encompasses a broad class of functions: convex functions, strongly convex functions, sum of squares (e.g. where considering a system of non-linear equations), invex and \textit{non-convex functions}, as well as over-parameterized systems. Also it is shown in \cite{Yue_2023} that \textit{non-accelerated algorithms (like Algorithm~\ref{algo: OrderRCD}) are optimal for} $L$-\textit{smooth} problems under the Polyak–Lojasiewicz~condition~\eqref{eq:PL}. 

    \vspace{-1em}
    \paragraph{High probability deviations bounds.} Given that OrderRCD method in strongly convex setting demonstrates a linear convergence rate and employs a randomization in coordinate selection, we can derive exact estimates of \textit{high deviation probabilities} using Markov's inequality \cite{Anikin_2015}:
    \begin{boxG}
        \begin{equation*}
            \mathcal{P}\left( f(x_{N(\varepsilon \sigma)}) - f^* \geq \varepsilon \right) \leq \sigma \frac{\mathbb{E}\left[ f(x_{N(\varepsilon \sigma)} \right] - f^*}{\varepsilon \sigma} \leq \sigma.
        \end{equation*}
    \end{boxG}

\section{Accelerated Method}
\label{sec:acc_method}
Random coordinate descent with order oracle (OrderRCD) demonstrates efficiency in the class of coordinate methods. The number of iterations $N$ required to achieve the desired accuracy $\varepsilon$ is fully identical to random coordinate descent (RCD), which is the best among the non-accelerated methods in this class, and are also not inferior to existing competitors with Order Oracle. This fact confirms that our proposed approach to developing a novel optimization algorithm is successful. However, Algorithm \ref{algo: OrderRCD} is still not optimal because it belongs to non-accelerated algorithms. Nevertheless, Section \ref{sec:non_acc_methods} gives hope for the possibility of acceleration among algorithms using~the~oracle~concept~\eqref{eq:Order_Oracle}. 

In this section, we demonstrate on the example of the strongly convex (Assumption \ref{ass:sc} is satisfied with constant $ \mualpha > 0 $) problem \eqref{eq:init_problem} that \textit{acceleration} in the class of optimization algorithms using the Order Oracle \eqref{eq:Order_Oracle} \textit{exists}! For simplicity, we consider the case when $\Delta = 0$. 

Our approach to creating an accelerated algorithm closely mirrors the one proposed in Section \ref{sec:non_acc_methods}, which involves adapting an existing optimization method (from the class of coordinate algorithms) to the Order Oracle using linear search method (namely, golden ratio method, GRM). Among the accelerated algorithms in the class of coordinate descent, two stand out: \citep[\textit{accelerated coordinate descent method} (ACDM),][]{Nesterov_2017} and \citep[\textit{accelerated coordinate descent method with non-uniform sampling} (NU-ACDM),][]{Allen_2016}. These algorithms boast the same convergence rate and are considered among the best available today. Therefore, we select one of them (specifically, ACDM) as the base algorithm to adapt to our oracle concept \eqref{eq:Order_Oracle}. At each iteration, the ACDM, given parameters such as $\alpha_k$, $\beta_k$, $a_{k+1}$, $A_{k+1}$, $B_{k+1}$, Lipschitz coordinate constant $L_i$, strong convexity $\mualpha$, distribution $p_{\beta}(i)$, and a starting point $x_0=z_0$, proceeds~as~follows:
\begin{boxD}
\vspace{-1em}
\begin{align}
    &y_k = (1 - \alpha_k)x_k + \alpha_k w_k, \nonumber \\
    &x_{k+1} = y_k - (1/L_{i_k}) \nabla_{i_k} f(y_k) \ee_{i_k}, \label{eq:ACDM_xi} \\
    &z_{k+1} = w_k - \frac{a_{k+1}}{L_{i_k}^{1-\alpha} B_{k+1} p_\beta(i)} \nabla_{i_k} f(y_k) \ee_{i_k}, \label{eq:ACDM_zi}
\end{align}
\end{boxD}
\vspace{-0.5em}
where $w_k = (1 - \beta_k) z_k + \beta_k y_k$. Looking at the $x_{k+1}$ update \eqref{eq:ACDM_xi}, it seems that it should not be difficult to determine the step size $1/L_{i_k}$ and the value of the gradient coordinate $\nabla_{i_k} f(y_k)$ using a linear search, as demonstrated in Algorithm \ref{algo: OrderRCD}. However, substituting the same $\eta_k$ instead of $(1/L_{i_k}) \nabla_{i_k} f(y_k)$ into the $z_{k+1}$ update \eqref{eq:ACDM_zi} isn't straightforward. The step with linear search is larger than the original step of $(1/L_{i_k}) \nabla_{i_k} f(y_k)$, potentially leading to a paradoxical situation where the 
\begin{minipage}{0.4\textwidth}
          step worsens with linear search. This is because there is no guarantee that the function $f$ is monotonically decreasing along the sequences $\{z_k\}_{k=0}^{\infty}$ and $\{x_k\}_{k=0}^{\infty}$ \cite{Nesterov_1983}. Nevertheless, we have successfully addressed this challenge and we are ready to present the first accelerated algorithm utilizing only the Order Oracle: the \textit{accelerated coordinate descent method with order oracle}. 
          
          \vspace{0.5em}
          It is evident from Algorithm \ref{algo: OrderACDM} that the challenge was addressed by incorporating a secondary linear search, ensuring that the update step in $z_{k+1}$ with linear search is at least as effective as \eqref{eq:ACDM_zi}. However, unlike Algorithm~\ref{algo: OrderRCD}, OrderACDM cannot be deemed fully adaptive as it necessitates knowledge of the strong convexity constant $\mualpha$ and the Smoothness constant $L_i$ (which disappears in the case where $\alpha = 0$). Despite this, we are ready to present the main advantage 
    \end{minipage}
    \begin{minipage}{0.2\textwidth}
    \end{minipage}
    \begin{minipage}{0.58\textwidth}
    \vspace{-1em}
    \begin{algorithm}[H]
    \caption{Accelerated Coordinate Descent Method with Order Oracle (OrderACDM)}
    \label{algo: OrderACDM}
    \begin{algorithmic}
       \STATE {\bfseries Input:} $x_0=z_0 \in \mathbb{R}^d$, $\mathcal{R}_\alpha(L)$, $A_0 = 0$,~$B_0 = 1$,~$\beta = \frac{\alpha}{2}$
       
       \FOR{$k=0$ {\bfseries to} $N-1$}
       \STATE {\hspace{-0.8em}1.}~~~choose active coordinate $i_k = \mathcal{R}_\beta(L)$
       \STATE {\hspace{-0.8em}2.}~~~find parameter $a_{k+1}$ from $a_{k+1}^2 S_{\beta}^2 = A_{k+1} B_{k+1}$,\\
       \phantom{\hspace{-0.8em}2.}~~~where $A_{k+1} = A_k + a_{k+1}$ and\\
       \phantom{\hspace{-0.8em}2.~~~where} $B_{k+1} = B_k + \mualpha a_{k+1}$
       \STATE {\hspace{-0.8em}3.}~~~$\alpha_k \gets \frac{a_{k+1}}{A_{k+1}}$
       \STATE {\hspace{-0.8em}4.}~~~$\beta_k \gets \frac{\mualpha a_{k+1}}{B_{k+1}}$
       \STATE {\hspace{-0.8em}5.}~~~$y_k \gets \frac{(1-\alpha_k) x_k + \alpha_k (1-\beta_k) z_k}{1 - \alpha_k \beta_k}$
       \STATE {\hspace{-0.8em}6.}~~~compute $\eta_k = \text{argmin}_{\eta} \{ f(y_k + \eta \ee_{i_k})\}$ via (GRM)
       \STATE {\hspace{-0.8em}7.}~~~$x_{k+1} \gets y_{k} + \eta_{k} \ee_{i_k}$
       \STATE {\hspace{-0.8em}8.}~~~$w_k \gets (1 - \beta_k) z_k + \beta_k y_k + \frac{a_{k+1} L_{i_k}^{\alpha}}{ B_{k+1} p_\beta(i)} \eta_k \ee_{i_k}$
       \STATE {\hspace{-0.8em}9.}~~~compute $\zeta_k = \text{argmin}_{\zeta} \{ f(w_k + \zeta \ee_{i_k})\}$ via (GRM)
       \STATE {\hspace{-0.8em}10.}~~~$z_{k+1} \gets w_k  + \zeta_k \ee_{i_k}$
       \ENDFOR
       \STATE {\bfseries Return:} $x_{N}$
    \end{algorithmic}
    \end{algorithm} 
\end{minipage}
of Algorithm: Faster convergence rate of Accelerated Coordinate Descent Method~with~Order~Oracle.


\begin{theorem}\label{th:OrderACDM}
    Let function $f(x)$ is strongly convex (Assumption \ref{ass:sc}), $L$-Smoothness (Assumption \ref{ass:smooth}) and $L_{[1-\alpha]}$-Smoothness\footnote{We assume that for any $x,y \in \mathbb{R}^d$ it holds:  ${f(y) \leq f(x) + \dotprod{\nabla f(x)}{y - x} + \frac{L_{[1-\alpha]}}{2} \norms{y - x}^2}$.}, then Algorithm \ref{algo: OrderACDM} (OrderACDM) with oracle \eqref{eq:Order_Oracle} guarantees an error
    \begin{equation*}
        \expect{f(x_N)} - f(x^*) \leq \left(  1 - \frac{\sqrt{\mualpha}}{S_{\alpha/2}}\right)^N F_0,
    \end{equation*}
    where $F_0 = f(x_0) - f(x^*)$, $S_{\alpha/2} = \sum_{i = 1}^d L_{i}^{\alpha/2}$.
\end{theorem}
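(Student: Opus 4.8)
The plan is to follow the standard estimate-sequence / Lyapunov analysis for accelerated coordinate descent (as in \cite{Nesterov_2017, Allen_2016}), but to track carefully the two places where the golden-ratio line search replaces the exact prescribed step. The key observation is that the line search in line~6 produces a point $x_{k+1}$ with $f(x_{k+1}) \le f(y_k + \eta\,\ee_{i_k})$ for \emph{every} $\eta$; in particular, taking $\eta = -(1/L_{i_k})\nabla_{i_k} f(y_k)$ and using Assumption~\ref{ass:smooth} (coordinate smoothness) along $\ee_{i_k}$, we get the usual one-coordinate progress bound
\begin{equation*}
    f(x_{k+1}) \le f(y_k) - \frac{1}{2 L_{i_k}} \left( \nabla_{i_k} f(y_k) \right)^2,
\end{equation*}
so the $x$-sequence behaves at least as well as the ACDM $x$-step. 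The same ``line search dominates the prescribed step'' argument applies in line~9: whatever coefficient ACDM would have placed in front of $\nabla_{i_k} f(y_k)\ee_{i_k}$ when forming $z_{k+1}$ from $w_k$, the point $z_{k+1} = w_k + \zeta_k \ee_{i_k}$ has $f(z_{k+1}) \le f(w_k + \zeta\,\ee_{i_k})$ for all $\zeta$, which is exactly what is needed to push the secondary line search into the role of the $z$-update without losing the required inequality.

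First I would set up the potential $\Psi_k = A_k \big( f(x_k) - f^* \big) + \tfrac{B_k}{2}\norms{z_k - x^*}^2$ (the natural estimate-sequence potential for NU-ACDM with sampling probabilities $p_\beta(i) = L_i^{\beta}/S_\beta$, $\beta = \alpha/2$), and show $\E_{i_k}[\Psi_{k+1}] \le \Psi_k$. This decomposition splits into: (i) the descent of $f(x_{k+1})$ relative to $f(y_k)$, handled by the bound above; (ii) the expansion of $\norms{z_{k+1} - x^*}^2$ relative to $\norms{w_k - x^*}^2$, where one uses $z_{k+1} = w_k + \zeta_k\ee_{i_k}$ together with the fact that the exact $z$-step (equation~\eqref{eq:ACDM_zi} with $\eta_k$ plugged in as in line~8) is a \emph{feasible} choice of $\zeta$, so the line-search point only decreases $f$ and the quadratic-in-$\zeta$ term can be controlled by the prescribed step's value; and (iii) the convexity/strong-convexity coupling via $y_k = \frac{(1-\alpha_k)x_k + \alpha_k(1-\beta_k)z_k}{1 - \alpha_k\beta_k}$ and $w_k$ (line~8), using Assumption~\ref{ass:sc} at $y_k$ against both $x_k$ and $x^*$. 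The algebra matching $a_{k+1}^2 S_\beta^2 = A_{k+1} B_{k+1}$, $\alpha_k = a_{k+1}/A_{k+1}$, $\beta_k = \mualpha a_{k+1}/B_{k+1}$ is the same bookkeeping as in the first-order proof and forces the cross terms to cancel. The $L_{[1-\alpha]}$-smoothness hypothesis enters only to certify that the prescribed (un-searched) step in the $w_k\!\to\!z_{k+1}$ move is a valid comparison point, i.e. to bound the quadratic remainder in the full norm $\norms{\cdot}$ rather than coordinate-wise.

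Then I would iterate: $\E[\Psi_N] \le \Psi_0 = A_0(f(x_0)-f^*) + \tfrac{B_0}{2}\norms{z_0-x^*}^2 = \tfrac12\norms{x_0-x^*}^2$ since $A_0 = 0$, $B_0 = 1$, $z_0 = x_0$; hence $A_N(\E[f(x_N)] - f^*) \le \Psi_0$, and strong convexity (Assumption~\ref{ass:sc}) gives $\tfrac12\norms{x_0-x^*}^2 \le F_0/\mualpha$. The final step is the growth rate of $A_N$: from the recursions, $B_{k+1}/B_k = 1 + \mualpha a_{k+1}/B_{k+1}$ and $A_{k+1} = a_{k+1}^2 S_\beta^2 / B_{k+1}$, and a standard induction shows $A_{k+1}/A_k \ge \big(1 - \sqrt{\mualpha}/S_\beta\big)^{-1}$ (with $S_\beta = S_{\alpha/2}$), so $A_N \ge \big(1-\sqrt{\mualpha}/S_{\alpha/2}\big)^{-N}\cdot(\text{const})$ and combining yields the claimed $\big(1 - \sqrt{\mualpha}/S_{\alpha/2}\big)^N F_0$ rate (the constant absorbed because $A_1 \gtrsim 1/\mualpha$ after the first step, matching the $F_0/\mualpha$ from $\Psi_0$).

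\textbf{Main obstacle.} The delicate point is step~(ii): justifying rigorously that replacing the ACDM-prescribed $z$-update by the line-search point $z_{k+1} = w_k + \zeta_k\ee_{i_k}$ does not break the potential inequality. The prescribed update decreases a \emph{specific} quadratic model along $\ee_{i_k}$, whereas the line search minimizes the true $f$; one must argue that $f(z_{k+1}) \le f(z_{k+1}^{\text{ACDM}})$ and simultaneously control $\norms{z_{k+1}-x^*}^2$, and these two goals pull in opposite directions since a larger step reduces $f$ but may enlarge the distance to $x^*$. Resolving this is exactly why line~8 pre-incorporates the $\eta_k$-term into $w_k$ (so that $w_k$ already sits at the ACDM-prescribed location) and line~9 then only needs the \emph{descent} guarantee of the second search — the distance term is handled at $w_k$, not at $z_{k+1}$. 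Verifying that this split is consistent with the $B_{k+1}p_\beta(i)$ scaling in \eqref{eq:ACDM_zi}, and that the $L_{i_k}^{\alpha}$ factor in line~8 correctly converts the line-search scalar $\eta_k$ back into the coordinate-gradient scale, is the crux of the argument.
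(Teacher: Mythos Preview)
Your overall Lyapunov/estimate-sequence setup matches the paper's, and you correctly identify the crux: how to control $\norms{z_{k+1} - x^*}^2$ when $z_{k+1}$ comes from a second line search rather than from the prescribed ACDM step. However, your proposed resolution of this obstacle does not work as stated, and it differs substantially from what the paper actually does.

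You write that ``the distance term is handled at $w_k$, not at $z_{k+1}$'' and that line~9 only needs the descent guarantee of the second search. But your own potential $\Psi_{k+1}$ contains $\tfrac{B_{k+1}}{2}\norms{z_{k+1} - x^*}^2$, not $\tfrac{B_{k+1}}{2}\norms{w_k - x^*}^2$. Expanding $\norms{z_{k+1}-x^*}^2 = \norms{w_k - x^*}^2 + 2L_{i_k}^{1-\alpha}\zeta_k\,(w_k^{(i_k)} - x^{*\,(i_k)}) + L_{i_k}^{1-\alpha}\zeta_k^2$ leaves a quadratic in $\zeta_k$ that cannot be ``controlled by the prescribed step's value'': the line search picks $\zeta_k$ to minimize $f$, not that quadratic, and nothing prevents $\zeta_k$ from being large. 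Also, $w_k$ is \emph{not} the ACDM-prescribed location: line~8 uses the first line-search output $\eta_k$, not $-\nabla_{i_k}f(y_k)/L_{i_k}$, so $w_k$ and $z^{\mathrm{ACDM}}_{k+1}$ are two different points on the same line $\omega_k + t\,\ee_{i_k}$, where $\omega_k := (1-\beta_k)z_k + \beta_k y_k$.

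The paper closes the gap by a different device. Writing $z^{\mathrm{ACDM}}_{k+1} := \omega_k - \tfrac{a_{k+1}}{L_{i_k}^{1-\alpha}B_{k+1}p_\beta(i_k)}\nabla_{i_k}f(y_k)\,\ee_{i_k}$, the standard ACDM algebra produces $-\norms{z^{\mathrm{ACDM}}_{k+1} - x^*}^2$ on the right-hand side. To swap this for $-\norms{z_{k+1} - x^*}^2$, the paper (i) inserts the zero term $\tfrac{2}{\mualpha}\langle \nabla f(x^*), z_{k+1}-x^*\rangle$, (ii) applies strong convexity at $x^*$ to turn it into $-\norms{z_{k+1}-x^*}^2 + \tfrac{2}{\mualpha}\bigl(f(z_{k+1})-f^*\bigr)$, and (iii) applies the $L_{[1-\alpha]}$-smoothness inequality \emph{centered at $x^*$} (so that $\nabla f(x^*)=0$) to convert $-\norms{z^{\mathrm{ACDM}}_{k+1}-x^*}^2$ into $\tfrac{2}{L_{[1-\alpha]}}\bigl(f^* - f(z^{\mathrm{ACDM}}_{k+1})\bigr)$. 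Since both $w_k$ and $z^{\mathrm{ACDM}}_{k+1}$ lie on $\omega_k + t\,\ee_{i_k}$, the second line search gives $f(z_{k+1}) \le f(z^{\mathrm{ACDM}}_{k+1})$, and the combined function-value residual is dropped. So $L_{[1-\alpha]}$-smoothness is not used to ``certify the comparison point'' as you suggest; it is applied at $x^*$ to transmute a squared distance into a function-value gap that the second line search can then absorb. This distance\,$\to$\,function-value\,$\to$\,line-search sandwich is the mechanism missing from your sketch.
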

Compared to the results of Section \ref{sec:non_acc_methods}, the convergence rate demonstrated in Theorem \ref{th:OrderACDM} is superior, \textit{confirming the potential for acceleration} in algorithms utilizing the Order Oracle. To achieve the desired accuracy $\varepsilon$, Algorithm \ref{algo: OrderACDM} (OrderACDM) requires $N = \Obound{S_{\alpha/2}/\sqrt{\mualpha} \log \frac{1}{\varepsilon}}$ iterations and ${T = \OboundTilde{S_{\alpha/2}/\sqrt{\mualpha} \log \frac{1}{\varepsilon}}}$ oracle calls, significantly outperforming existing competitors that utilize the Order Oracle (see Table~\ref{tab:table_compare}). One notable difference from Algorithm \ref{algo: OrderRCD} is selection of the active coordinate $i = \mathcal{R}_{\beta}(L)$, where $\beta = \alpha / 2$. This choice aims to eliminate dimensionality $d$ from the iteration and oracle complexities derived in \cite{Lee_2013},~since~${S_{\alpha/2} \leq \sqrt{d S_{\alpha}}}$. A detailed proof of Theorem \ref{th:OrderACDM} is given in Appendix~\ref{app:appendix_accelerated}.

\section{Stochastic Order Oracle Concept}\label{sec:Stochastic Order Oracle concept}

This section is devoted to an equally important concept of the Order Oracle~\eqref{eq:Order_Oracle}, namely its modification to the stochastic case, where the values of two functions on one realization $\xi$ are compared:
\begin{equation}
\label{eq:Stochastic_Order_Oracle}
    \phi(x,y,\xi) = \text{sign}\left[f(x,\xi) - f(y,\xi) \right].
\end{equation}
For simplicity, we assume that the oracle~\eqref{eq:Stochastic_Order_Oracle} is not subject to adversarial noise. This oracle concept can also be motivated by the creation of an ideal chocolate only for a group of people on average. That is, in this concept the $\xi$-th realization of the function can be understood as the $\xi$-th~individual~of~group.

To address the stochastic black-box optimization problem~\eqref{eq:init_problem} when only the Order Oracle~\eqref{eq:Stochastic_Order_Oracle} is available, we provide the first optimization algorithm that uses exactly the stochastic~oracle~concept~\eqref{eq:Stochastic_Order_Oracle}:
\begin{boxF}
\begin{equation}
    \label{alg:Stochastic}
    x_{k+1} = x_k - \eta_k \phi (x_k + \gamma_k \ee_k, x_k - \gamma_k \ee_k, \xi_k) \ee_k,
\end{equation}
\end{boxF}

\vspace{-1em}
where $\gamma_k>0$ is a smoothing parameter, $\ee_k \in S^d(1)$ is~a~vector uniformly distributed on the Euclidean sphere. In order to proceed to convergence guarantees of this method~\eqref{alg:Stochastic}, we use the~auxiliary~results.
\begin{lemma}
\label{lem:sec5_lem1}
    Let function $f$ be $L$-smooth\footnote{We assume that for any $x,y \in \mathbb{R}^d$ it holds:  ${f(y,\xi) \leq f(x,\xi) + \dotprod{\nabla f(x,\xi)}{y - x} + \frac{L}{2} \norm{y - x}^2}$.}, $\gamma_k = \frac{\norm{\nabla f(x_k,\xi_k)}}{\sqrt{d} L}$, $\ee_k \in S^d(1)$,~then~the~following~holds:
    \begin{equation*}
        \phi (x_k + \gamma_k \ee_k, x_k - \gamma_k \ee_k, \xi_k) \ee_k = \textnormal{sign} \left[ \dotprod{\nabla f(x_k, \xi_k)}{\ee_k} \right] \ee_k.
    \end{equation*}
\end{lemma}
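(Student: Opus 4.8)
The statement is essentially a Taylor-expansion / sign-stability argument. Writing $g = \nabla f(x_k,\xi_k)$ for brevity, the Order Oracle returns
\[
\phi(x_k+\gamma_k\ee_k, x_k-\gamma_k\ee_k,\xi_k) = \textnormal{sign}\left[f(x_k+\gamma_k\ee_k,\xi_k) - f(x_k-\gamma_k\ee_k,\xi_k)\right],
\]
so the claim reduces to showing that this sign agrees with $\textnormal{sign}\left[\dotprod{g}{\ee_k}\right]$ for the prescribed $\gamma_k$. The plan is to use $L$-smoothness on the same realization $\xi_k$ to sandwich the finite difference $f(x_k+\gamma_k\ee_k,\xi_k) - f(x_k-\gamma_k\ee_k,\xi_k)$ between $2\gamma_k\dotprod{g}{\ee_k} \pm L\gamma_k^2\norm{\ee_k}^2$. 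Since $\ee_k\in S^d(1)$ we have $\norm{\ee_k}=1$, so the difference lies in the interval $2\gamma_k\dotprod{g}{\ee_k} \pm L\gamma_k^2$. If I can show the magnitude of the linear term dominates the quadratic remainder, the sign is determined by $\dotprod{g}{\ee_k}$, and multiplying both sides by $\ee_k$ gives the lemma.

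First I would invoke the standard consequence of $L$-smoothness (the quadratic upper bound stated in the footnote, together with its mirror lower bound): for any $x,v$,
\[
\left| f(x+v,\xi_k) - f(x,\xi_k) - \dotprod{\nabla f(x,\xi_k)}{v} \right| \le \tfrac{L}{2}\norm{v}^2.
\]
Applying this with $v = \pm\gamma_k\ee_k$ and subtracting yields
\[
\left| f(x_k+\gamma_k\ee_k,\xi_k) - f(x_k-\gamma_k\ee_k,\xi_k) - 2\gamma_k\dotprod{g}{\ee_k} \right| \le L\gamma_k^2.
\]
Now substitute $\gamma_k = \norm{g}/(\sqrt{d}\,L)$, so that $L\gamma_k^2 = \norm{g}^2/(d L)$ and $2\gamma_k\dotprod{g}{\ee_k} = \tfrac{2}{\sqrt d\,L}\norm{g}\dotprod{g}{\ee_k}$. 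The remaining task is to compare $\big|\dotprod{g}{\ee_k}\big|$ with $\tfrac{\norm{g}}{2\sqrt d}$ — i.e., to argue that whenever $\dotprod{g}{\ee_k}\neq 0$ the linear term beats the remainder.

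Here is where I expect the main subtlety: \emph{a priori} there is no pointwise lower bound $|\dotprod{g}{\ee_k}| \ge \norm{g}/(2\sqrt d)$ for an arbitrary direction $\ee_k$ on the sphere — such a bound only holds in expectation / typically. So the literal identity in the lemma cannot hold for every $\ee_k$; I would therefore read the lemma as holding for the directions that matter (or almost surely, or up to the event $\dotprod{g}{\ee_k}=0$ which has measure zero), and I would check the surrounding text/appendix to see whether the authors intend $\gamma_k$ to be small enough that only the sign survives, or whether they implicitly restrict to coordinate-like $\ee_k$. Modulo that reading, the argument closes: on the event that $\textnormal{sign}[\dotprod{g}{\ee_k}]$ is the dominant term, the finite difference has the same sign as $\dotprod{g}{\ee_k}$, hence $\phi(\cdots)\ee_k = \textnormal{sign}[\dotprod{g}{\ee_k}]\ee_k$, and when $\dotprod{g}{\ee_k}=0$ both sides are zero (or interpreted via the $\textnormal{sign}(0)$ convention). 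The only real work is the two applications of smoothness and the substitution of $\gamma_k$; the conceptual care needed is in justifying that the chosen $\gamma_k$ makes the quadratic error harmless, which is precisely why $\gamma_k$ is tied to $\norm{g}$ and scaled by $1/(\sqrt d\,L)$.
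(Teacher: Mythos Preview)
Your first half---the two applications of $L$-smoothness at $\pm\gamma_k\ee_k$ and the resulting sandwich
\[
\left| f(x_k+\gamma_k\ee_k,\xi_k) - f(x_k-\gamma_k\ee_k,\xi_k) - 2\gamma_k\dotprod{g}{\ee_k} \right| \le L\gamma_k^2
\]
---is exactly what the paper does, and your identification of the real difficulty (there is no deterministic lower bound $|\dotprod{g}{\ee_k}|\ge L\gamma_k$ for arbitrary $\ee_k$) is spot on. However, you stop at ``I would check the surrounding text,'' and that is where the paper actually does the work you are missing.

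The paper does \emph{not} resolve the issue by restricting to coordinate directions or by declaring the bad set to be measure zero in a soft sense. Instead it gives an explicit probabilistic bound: writing $\ee = \uu/\norm{\uu}$ with $\uu\sim\mathcal{N}(0,I)$, it controls the failure event $\{L\gamma > |\dotprod{g}{\ee}|\}$ by a union bound over $\{\norm{\uu} > 2\sqrt{d\log(1/\tilde\beta)}\}$ and $\{|\dotprod{g}{\uu}| < 2L\gamma\sqrt{d\log(1/\tilde\beta)}\}$. The first event has probability at most $\tilde\beta$ by Gaussian norm concentration; the second is handled by the anti-concentration bound $\mathcal{P}(|\dotprod{g}{\uu}|\le t)\le t/\norm{g}$ since $\dotprod{g}{\uu}\sim\mathcal{N}(0,\norm{g}^2)$. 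Optimising over $\tilde\beta$ yields a failure probability bounded by
\[
\frac{3L\gamma}{\norm{g}}\sqrt{d\log\frac{\norm{g}}{\sqrt{d}\,L\gamma}},
\]
and substituting the prescribed $\gamma = \norm{g}/(\sqrt{d}\,L)$ makes the logarithm vanish, so the paper concludes the identity holds with probability $1$. This Gaussian-representation plus anti-concentration step is the missing ingredient in your plan; without it the argument is incomplete rather than merely informal.
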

Now using Lemma~\ref{lem:sec5_lem1} we show how our algorithm~\eqref{alg:Stochastic} with oracle~\eqref{eq:Stochastic_Order_Oracle} is related to~normalized~SGD.
\begin{lemma}
\label{lem_sec5_lem2}
    Let vector $\nabla f(x,\xi) \in \mathbb{R}^d$ and vector $\ee \in S^d(1)$, then with some constant~$c$~we~have
    \begin{equation*}
        \mathbb{E}_{\ee} \left[ \textnormal{sign} \left[ \dotprod{\nabla f(x, \xi)}{\ee} \right] \ee \right] = \frac{c}{\sqrt{d}} \cdot \frac{\nabla f(x,\xi)}{\norm{\nabla f(x,\xi)}}.
    \end{equation*}
\end{lemma}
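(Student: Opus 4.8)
The plan is to exploit the rotational symmetry of the uniform law on the sphere $S^d(1)$. Fix the realization and write $g := \nabla f(x,\xi)$; we may assume $g \neq 0$, since otherwise $\textnormal{sign}[\dotprod{g}{\ee}] = 0$ and both sides vanish, and throughout we ignore the probability-zero event $\dotprod{g}{\ee} = 0$. Put $\hat g := g/\norm{g}$. First I would split each $\ee$ into its component along $\hat g$ and its orthogonal part, $\ee = \dotprod{\ee}{\hat g}\,\hat g + \ee_\perp$ with $\dotprod{\ee_\perp}{\hat g} = 0$, which gives
\begin{equation*}
\expectSphere{\textnormal{sign}\left[\dotprod{g}{\ee}\right]\ee} = \expectSphere{\textnormal{sign}\left[\dotprod{g}{\ee}\right]\dotprod{\ee}{\hat g}}\,\hat g + \expectSphere{\textnormal{sign}\left[\dotprod{g}{\ee}\right]\ee_\perp}.
\end{equation*}

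Second, I would show the orthogonal term vanishes. For an arbitrary fixed unit vector $v \perp \hat g$, let $S_v\ee := \ee - 2\dotprod{\ee}{v}\,v$ be the reflection across the hyperplane $v^{\perp}$. Since $S_v$ is orthogonal, $S_v\ee$ is again uniform on $S^d(1)$; moreover $\dotprod{S_v\ee}{g} = \dotprod{\ee}{g}$ because $\dotprod{v}{g} = 0$, while $\dotprod{S_v\ee}{v} = -\dotprod{\ee}{v}$. Replacing $\ee$ by $S_v\ee$ inside the expectation therefore shows that $\expectSphere{\textnormal{sign}[\dotprod{g}{\ee}]\dotprod{\ee}{v}}$ equals its own negative, hence is $0$. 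Letting $v$ range over an orthonormal basis of $\hat g^{\perp}$ forces $\expectSphere{\textnormal{sign}[\dotprod{g}{\ee}]\ee_\perp} = 0$, so the expectation is a scalar multiple of $\hat g$.

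Third, I would compute that scalar. Since $\norm{g} > 0$ we have $\textnormal{sign}[\dotprod{g}{\ee}] = \textnormal{sign}[\dotprod{\hat g}{\ee}]$, hence $\textnormal{sign}[\dotprod{g}{\ee}]\dotprod{\ee}{\hat g} = \abs{\dotprod{\ee}{\hat g}}$, and the coefficient is $\expectSphere{\abs{\dotprod{\ee}{\hat g}}}$. By rotational invariance of the uniform law on $S^d(1)$, the number $\kappa_d := \expectSphere{\abs{\dotprod{\ee}{u}}}$ does not depend on the choice of unit vector $u$, and a standard spherical-integral computation gives $\kappa_d = \Theta(d^{-1/2})$ (indeed $\sqrt d\,\kappa_d \to \sqrt{2/\pi}$ as $d \to \infty$, and elementarily $1/d \le \kappa_d \le 1/\sqrt d$ from $\expectSphere{\dotprod{\ee}{u}^2} = 1/d$). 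Setting $c := \sqrt d\,\kappa_d$ we obtain $\expectSphere{\abs{\dotprod{\ee}{\hat g}}} = c/\sqrt d$ and conclude
\begin{equation*}
\expectSphere{\textnormal{sign}\left[\dotprod{g}{\ee}\right]\ee} = \frac{c}{\sqrt d}\,\hat g = \frac{c}{\sqrt d}\cdot\frac{\nabla f(x,\xi)}{\norm{\nabla f(x,\xi)}}.
\end{equation*}

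The only genuinely delicate point is the reflection argument that pins the expectation to the direction $\hat g$; the rest is routine computation. For the downstream analysis the exact value of $c$ is irrelevant — what matters, and what the moment/integral estimates supply, is that $c$ is bounded between two absolute positive constants uniformly in $d$, so that the update \eqref{alg:Stochastic} together with Lemma~\ref{lem:sec5_lem1} is, up to the scaling factor $c/\sqrt d$, exactly normalized SGD on the realization $\xi$.
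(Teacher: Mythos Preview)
Your proof is correct and follows essentially the same route as the paper: a reflection/symmetry argument to pin the expectation along $\hat g$, followed by identifying the scalar as $\expectSphere{|\dotprod{\ee}{\hat g}|}$ and bounding it by $\Theta(d^{-1/2})$. The only cosmetic difference is the choice of reflection: the paper uses a single Householder map $P = 2\hat g\hat g^{\mathrm T} - I$ (flipping the whole orthogonal complement at once and averaging $\ee$ with $P\ee$), whereas you use one reflection $S_v$ per orthogonal basis vector to kill each component separately; both arguments are equally clean. One small remark: your ``elementary'' bound $\kappa_d \ge 1/d$ only yields $c \ge 1/\sqrt d$, not an absolute constant, so for the uniform lower bound on $c$ you are implicitly leaning on the ``standard spherical-integral computation'' you cite; the paper instead works out an explicit $c \ge 1/20$ via the Gaussian representation $\ee = \uu/\norm{\uu}$ and a short tail estimate.
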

As Lemma~\ref{lem_sec5_lem2} shows, the direction in which a step is taken in Algorithm~\eqref{alg:Stochastic} is the direction of normalized stochastic gradient descent. Given this fact and based on the work of \cite{Polyak_1980} we can provide asymptotic convergence for the first algorithm using the oracle concept \eqref{eq:Stochastic_Order_Oracle}.
\begin{theorem}[Asymptotic convergence]
\label{th:Asymptotic}
    Let the function $f$ be a $L$-smooth and $\ee_k \in S^d(1)$, then for the algorithm \eqref{alg:Stochastic} with step size $\eta_k = \eta/k$ the value $\sqrt{N} \left( x_k - x^* \right)$  is asymptotically normal: $\sqrt{N} \left( x_k - x^* \right) \sim \mathcal{N} \left( 0, V \right)$, where the matrix $V$ is as follows:
    \begin{equation*}
        V = \frac{\eta^2}{d} \left( 2 \eta (1 - 1/d) \frac{c}{\sqrt{d}} \alpha \nabla^2 f(x^*) - I \right)^{-1},
    \end{equation*}
    $\alpha = \int \norm{z}^{-1} d P(z) < \infty$, $2 \eta (1 - 1/d) \frac{c}{\sqrt{d}} \alpha \nabla^2 f(x^*) > I$ (where $I$ is unit matrix).
\end{theorem}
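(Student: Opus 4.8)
\emph{Proof plan.} The strategy is to recognize \eqref{alg:Stochastic} as a classical Robbins--Monro recursion with harmonic step size $\eta_k=\eta/k$ and then apply the standard asymptotic--normality theory for such schemes (as in \cite{Polyak_1980}). First, Lemma~\ref{lem:sec5_lem1} (with the prescribed $\gamma_k=\norm{\nabla f(x_k,\xi_k)}/(\sqrt{d}\,L)$) turns \eqref{alg:Stochastic} into the normalized--SGD iteration $x_{k+1}=x_k-\eta_k\,\textnormal{sign}\!\left[\dotprod{\nabla f(x_k,\xi_k)}{\ee_k}\right]\ee_k$. Writing $g_k:=\textnormal{sign}[\dotprod{\nabla f(x_k,\xi_k)}{\ee_k}]\ee_k$ and letting $\cF_k$ be the $\sigma$-algebra generated by $x_0,\dots,x_k$, split $g_k=b(x_k)+\psi_k$ into its conditional mean $b(x):=\mathbb{E}[g_k\mid x_k=x]$ and a martingale--difference term $\psi_k$. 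By Lemma~\ref{lem_sec5_lem2} (averaging over $\ee_k$ and then over the data $\xi_k$),
\begin{equation*}
  b(x)=\frac{c}{\sqrt{d}}\,\mathbb{E}_{\xi}\!\left[\frac{\nabla f(x,\xi)}{\norm{\nabla f(x,\xi)}}\right],
\end{equation*}
so the recursion reads $x_{k+1}-x^*=(x_k-x^*)-\tfrac{\eta}{k}\big(b(x_k)+\psi_k\big)$.

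Next I would linearize the drift at $x^*$ and compute the limiting noise covariance. Since $\nabla f(x^*)=0$ and, by the symmetry/isotropy of the stochastic--gradient law $P$ at the optimum, $\mathbb{E}_{\xi}[\nabla f(x^*,\xi)/\norm{\nabla f(x^*,\xi)}]=0$, the point $x^*$ is the equilibrium of $b$. A first--order expansion of the map $z\mapsto z/\norm{z}$ around $z=\nabla f(x^*,\xi)$, combined with $\nabla f(x,\xi)=\nabla f(x^*,\xi)+\nabla^2 f(x^*)(x-x^*)+o(\norm{x-x^*})$ and the identities $\mathbb{E}_{\xi}[\norm{z}^{-1}]=\alpha<\infty$ and $\mathbb{E}_{\xi}[zz^\top/\norm{z}^2]=\tfrac1d I$, yields
\begin{equation*}
  b(x)=G\,(x-x^*)+o(\norm{x-x^*}),\qquad G:=\frac{c}{\sqrt{d}}\,\alpha\Big(1-\frac1d\Big)\nabla^2 f(x^*).
\end{equation*}
For the noise, $\norm{g_k}=1$ and $b(x^*)=0$ give $\mathbb{E}[\psi_k\psi_k^\top\mid\cF_k]\to\expectSphere{\ee\ee^\top}=\tfrac1d I$ as $x_k\to x^*$; hence the additive noise $-\eta\psi_k$ in the recursion has limiting conditional covariance $\tfrac{\eta^2}{d}I$.

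I would then invoke the stochastic--approximation CLT. One first checks $x_k\to x^*$ almost surely, by a Robbins--Siegmund/Lyapunov argument: near $x^*$ one has $\dotprod{b(x)}{x-x^*}>0$ because $\nabla^2 f(x^*)\succ0$, while $\mathbb{E}[\norm{g_k}^2]=1$ is bounded, and away from $x^*$ one uses $L$--smoothness and strong convexity of $f$. Given a.s.\ convergence, the equilibrium linearization $G$, and the noise covariance $\tfrac{\eta^2}{d}I$, the classical limit theorem for $1/k$--step recursions applies \emph{provided} $\eta G-\tfrac12 I\succ0$ --- which is exactly the stated hypothesis $2\eta(1-1/d)\tfrac{c}{\sqrt{d}}\alpha\nabla^2 f(x^*)\succ I$ --- and gives $\sqrt{k}\,(x_k-x^*)\xrightarrow{d}\mathcal N(0,V)$ (this being the meaning of the ``$\sqrt{N}$'' scaling in the statement), with $V$ the unique solution of the Lyapunov equation $(\eta G-\tfrac12 I)V+V(\eta G-\tfrac12 I)^\top=\tfrac{\eta^2}{d}I$. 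Since $\nabla^2 f(x^*)$ is symmetric, $\eta G$ is symmetric, $V$ commutes with it, and the equation collapses to $2(\eta G-\tfrac12 I)V=\tfrac{\eta^2}{d}I$, i.e.
\begin{equation*}
  V=\frac{\eta^2}{d}\big(2\eta G-I\big)^{-1}=\frac{\eta^2}{d}\left(2\eta\Big(1-\frac1d\Big)\frac{c}{\sqrt{d}}\,\alpha\,\nabla^2 f(x^*)-I\right)^{-1},
\end{equation*}
which is the claimed matrix.

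The main obstacle is the combination of the a.s.\ convergence step with the delicacy of linearizing a \emph{normalized} stochastic gradient: the drift $b$ is non--smooth at $x^*$, and its per--sample building block $\norm{\nabla f(\cdot,\xi)}^{-1}$ can blow up near stationary points of individual $f_\xi$. The hypothesis $\alpha=\int\norm{z}^{-1}\,dP(z)<\infty$ is precisely what makes $b$, its equilibrium, and its Jacobian at $x^*$ well defined, and what controls the Taylor remainder in expectation; likewise the spectral ``stability margin'' $\eta G\succ\tfrac12 I$ is exactly the threshold below which the convergence rate would be slower than $k^{-1/2}$ (or the limiting variance infinite). Once a.s.\ convergence and this spectral condition are secured, the asymptotic normality and the closed form of $V$ follow from the standard stochastic--approximation limit theorem.
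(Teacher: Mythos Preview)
Your proposal is correct and follows essentially the same route as the paper: reduce \eqref{alg:Stochastic} to normalized SGD via Lemmas~\ref{lem:sec5_lem1}--\ref{lem_sec5_lem2}, identify the mean field $b(x)=R(x)=\frac{c}{\sqrt d}\int\varphi(\nabla f(x)+z)\,dP(z)$ with $\varphi(z)=z/\norm{z}$, compute its Jacobian at $x^*$ as $G=(1-1/d)\tfrac{c}{\sqrt d}\alpha\,\nabla^2 f(x^*)$, and then invoke the stochastic--approximation CLT of \cite{Polyak_1980}. The only difference is expository: the paper defers the noise covariance $\tfrac{1}{d}I$, the a.s.\ convergence prerequisite, and the resulting Lyapunov equation to Polyak's Theorem~2, whereas you unpack them explicitly (and make the needed isotropy of $P$ overt).
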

For a detailed proof of Theorem~\ref{th:Asymptotic}, including a consideration of auxiliary Lemmas, see Appendix~\ref{app:Asymptotic}.

\section{Discussion}
\label{sec:discussion}

\vspace{-0.5em}
In Section~\ref{sec:non_acc_methods}, we showed that such a multidimensional optimization problem~\eqref{eq:init_problem} can be solved using Algorithm~\ref{algo: OrderRCD}, which utilizes only comparative information. The fact that such an algorithm exists is not surprising, but the fact that it is as good as other methods with this oracle or first-order algorithms (in the class of coordinate algorithms) is positive news that allows us to think about the optimality of the algorithm. In the Subsection~\ref{subsec:sc}, we showed that when the Polyak-Lojasiewicz condition is satisfied, we can perhaps consider that OrderRCD has optimal iteration complexity \citep[see, ][]{Yue_2023}. However, when the other conditions are satisfied, this is not the case, since Algorithm~\ref{algo: OrderRCD} belongs to non-accelerated algorithms. Therefore, we believe that Section~\ref{sec:acc_method} provides a vector for the development of the question of optimality by showing an OrderACDM, which can perhaps be considered optimal in terms of iteration complexity. However, in the case of a \textit{low-dimensional problem}, we can take a "Private communication" approach proposed Yurii Nesterov (for a more detailed description, see Appendix~\ref{app:privet_comm}) to create a more efficient  algorithm using only oracle~\eqref{eq:Order_Oracle}.

Before move to the numerical experiments, it's important to note that accelerated Algorithm~\ref{algo: OrderACDM} employs the golden ratio method (GRM) twice per iteration. This might raise concerns about its computational efficiency. However, in practice, we found that OrderACDM converges efficiently, comparable to the first-order algorithm, even when utilizing the golden ratio method only once. For a detailed analysis of the numerical experiments, refer to next Section~\ref{sec:Experiments} and  Appendix~\ref{app:add_experiments}.

\vspace{-0.5em}
\section{Numerical Experiments}
\label{sec:Experiments}

\vspace{-0.5em}
In this section, we investigate the performance of the proposed algorithms in the corresponding Sections \ref{sec:non_acc_methods} and \ref{sec:acc_method} on a numerical experiment. We compare OrderRCD (see Algorithm~\ref{algo: OrderRCD}) and OrderACDM (see Algorithm~\ref{algo: OrderACDM}), which utilize deterministic oracle concept~\eqref{eq:Order_Oracle} with existing first-order state-of-the-art algorithms. The goal is to highlight our theoretical results.

\vspace{-0.5em}
The optimization problem~\eqref{eq:init_problem} has a standard quadratic form: $
    \min_{x \in \mathbb{R}^d} f(x) := \frac{1}{2}\dotprod{x}{Ax} - \dotprod{b}{x} + c,$ 
where $A \in \mathbb{R}^{d \times d}$, $b \in \mathbb{R}^d$, and $c \in \mathbb{R}$. 
We use a uniform distribution ($\alpha = 0$) to choose the active coordinate, then the distribution \eqref{eq:distibution} has the following form: $p_0 (i) = 1/d$. For such a problem, the assumptions of $L$-Coordinate-Lipschitz smoothness (Assumption~\ref{ass:smooth}) and $\mualpha$-strong convexity (Assumption~\ref{ass:sc}) are satisfied, where $L_i = A_{ii}$. In all experiments, we employ the golden ratio method (GRM) to solve the linear search problem with a precision of ${\epsilon = 10^{-8}}$.

\begin{wrapfigure}{l}{0.3\textwidth}
  \includegraphics[width=1.0\linewidth]{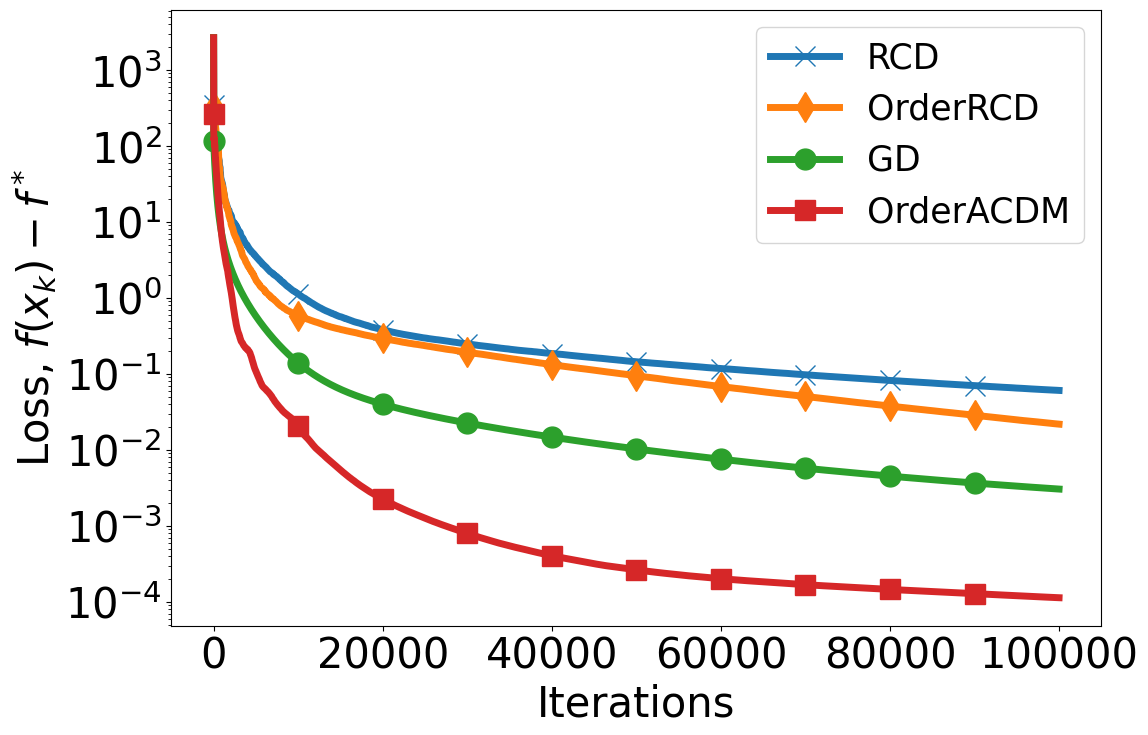}
    \caption{Comparison of algorithms proposed with non-accelerated first-order algorithms.}
    \label{fig:all_methods}
  \vspace{-1em}
\end{wrapfigure}

\vspace{-0.5em}
In Figure \ref{fig:all_methods}, we compare the convergence \textit{random coordinate descent with order oracle} (OrderRCD) and \textit{accelerated coordinate descent method with order oracle} (OrderACDM) with the SOTA non-accelerated algorithms: \textit{random coordinate descent} (RCD) from \cite{Nesterov_2012}, as well as \textit{gradient descent} (GD). Non-accelerated coordinate algorithms, both for first-order oracle (RCD) and for our oracle concept (OrderRCD), are observed to lag behind gradient descent, confirming our theoretical derivations in Section \ref{sec:non_acc_methods}. Interestingly, the random coordinate descent with order oracle even \textit{outperforms its first-order counterpart}, despite the limitations associated with oracle usage (only Order Oracle \eqref{eq:Order_Oracle} available). This observation can be attributed to the adaptiveness of Algorithm~\ref{algo: OrderRCD}, as OrderRCD employs an exact step in the steepest descent direction obtained using the golden ratio method (GRM) at each iteration. Additionally, we can observe perhaps the most significant result demonstrated in Figure \ref{fig:all_methods}: \textit{acceleration in our oracle concept \eqref{eq:Order_Oracle} exists}! We see that accelerated coordinate descent method with order oracle outpaces the convergence speed of all non-accelerated algorithms, including first-order coordinate (RCD) and full-gradient (GD) methods. In this experiment, OrderACDM was implemented using the method described in Algorithm~\ref{algo: OrderACDM} with $\zeta_k = 0$ (i.e., with one golden ratio method); RCD and GD used a constant step size, specifically $1/L_i$ and $1/L$.

\vspace{-0.5em}
\section{Conclusion}
\label{sec:Conclusion}

\vspace{-0.5em}
We proposed a new approach to design optimization algorithms using only the deterministic concept of Order Oracle~\eqref{eq:Order_Oracle} by providing theoretical guarantees (showing SOTA results up to logarithm factor) for non-accelerated algorithms in non-convex, convex and strongly convex settings. We also discussed under which condition the Algorithm~\ref{algo: OrderRCD} is optimal (under the Polyak-Lojasiewicz condition). Using the proposed approach, we have shown that acceleration in the deterministic concept of the Order Oracle exists, thereby opening up a whole range of potential research. Furthermore, we have shown how the evaluation of the accelerated algorithm (so still convex tuning) can be improved by considering low-dimensional problems. Moreover, we provided first-of-its-kind theoretical guarantees for an algorithm utilizing the stochastic concept of Order Oracle~\eqref{eq:Stochastic_Order_Oracle}. Finally, we demonstrated the effectiveness of the proposed algorithms (OrderRCD and OrderACDM) on numerical experiments, thereby validating the theoretical results. We provided practical recommendations for~implementation~of~these~algorithms.

\section{Authors' Affiliation Clarification}
MIPT $=$ \textit{Moscow Institute of Physics and Technology, Dolgoprudny, Russia}; \\Skoltech $=$ \textit{Skolkovo Institute of Science and Technology, Moscow, Russia}; \\ISP RAS $=$ \textit{Ivannikov Institute for System Programming of the Russian Academy of Sciences, Moscow, Russia}; \\Innopolis $=$ \textit{Research Center for Artificial Intelligence, Innopolis University, Innopolis, Russia}; \\MI RAS $=$ \textit{Steklov Mathematical Institute of Russian Academy of Sciences, Moscow, Russia}.

\section{Acknowledgments and Disclosure of Funding}
This research has been financially supported by The Analytical Center for the Government of the Russian Federation (Agreement No. 70-2021-00143 01.11.2021, IGK 000000D730324P540002). The authors are grateful to Eduard Gorbunov, Andrey Neznamov, Alexander Vedyakhin.

\medskip

\bibliographystyle{plainnat-fixed}
{\small
\bibliography{3_references.bib}
}

\newpage
\appendix

\vbox{%
    \hsize\textwidth
    \linewidth\hsize
    \vskip 0.1in
    \vskip 0.29in
    \vskip -\parskip
    \hrule height 1pt
    \vskip 0.19in
    \vskip 0.09in
    \begin{center}
        \LARGE \bf APPENDIX \\ Acceleration Exists! Optimization Problems When Oracle Can Only Compare Objective Function Values
    \end{center}
    \vskip 0.29in
    \vskip -\parskip
    \hrule height 1pt
    \vskip 0.09in}


\section{A Few More Words on the Motivation Behind the Order Oracle Concept}\label{app:Motivation}
In this Section, we would like to emphasize the motivation behind the problem statement discussed in this paper. In particular, we would like to demonstrate the applicability of this work.

\subsection{Perfect coffee for everyone}
As already demonstrated in Section~\ref{sec:Introduction} (Introduction) with the example of chocolate, the deterministic concept of the Order Oracle has many potential applications. However, this research was initiated due to a challenge one of the co-authors faced during the realization of \textit{a startup: the creation of an ideal coffee machine that can make the perfect drink for each customer}. This startup has just started its life cycle. At the moment we have designed a coffee machine that is functioning at the testing stage (see the photo of the machine in Figure~\ref{fig:Cofee_Machine} and the 3D model in Figure~\ref{fig:3d_Cofee_Machine}).    

\begin{figure}[H]
\centering
\begin{minipage}{.48\textwidth}
  \centering
  \includegraphics[width=0.65\linewidth]{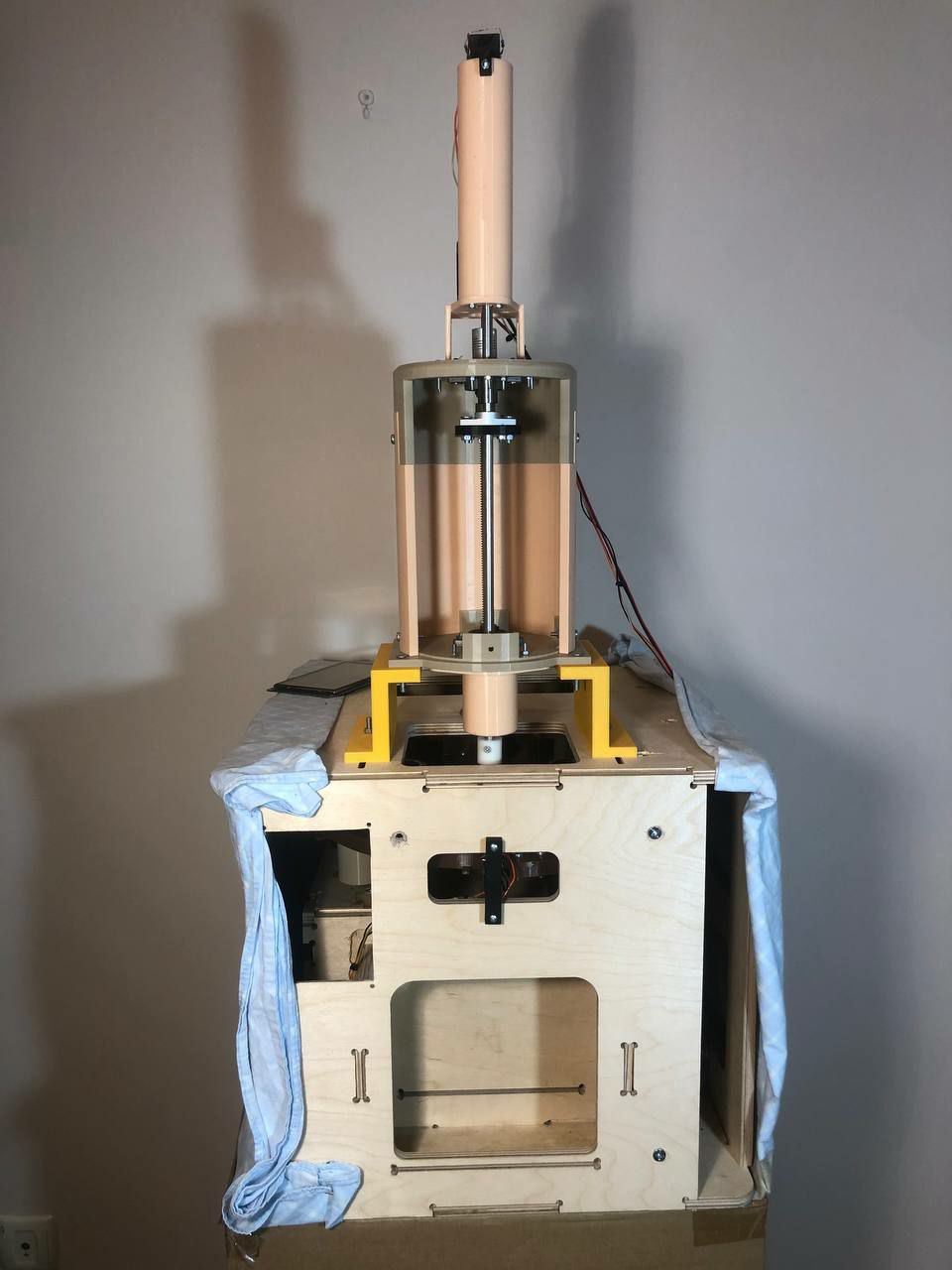}
    \caption{Smart coffee machine.}
    \label{fig:Cofee_Machine}
\end{minipage}%
\begin{minipage}{.04\textwidth}
\phantom{02}
\end{minipage}%
\begin{minipage}{.48\textwidth}
  \centering
  \includegraphics[width=0.75\linewidth]{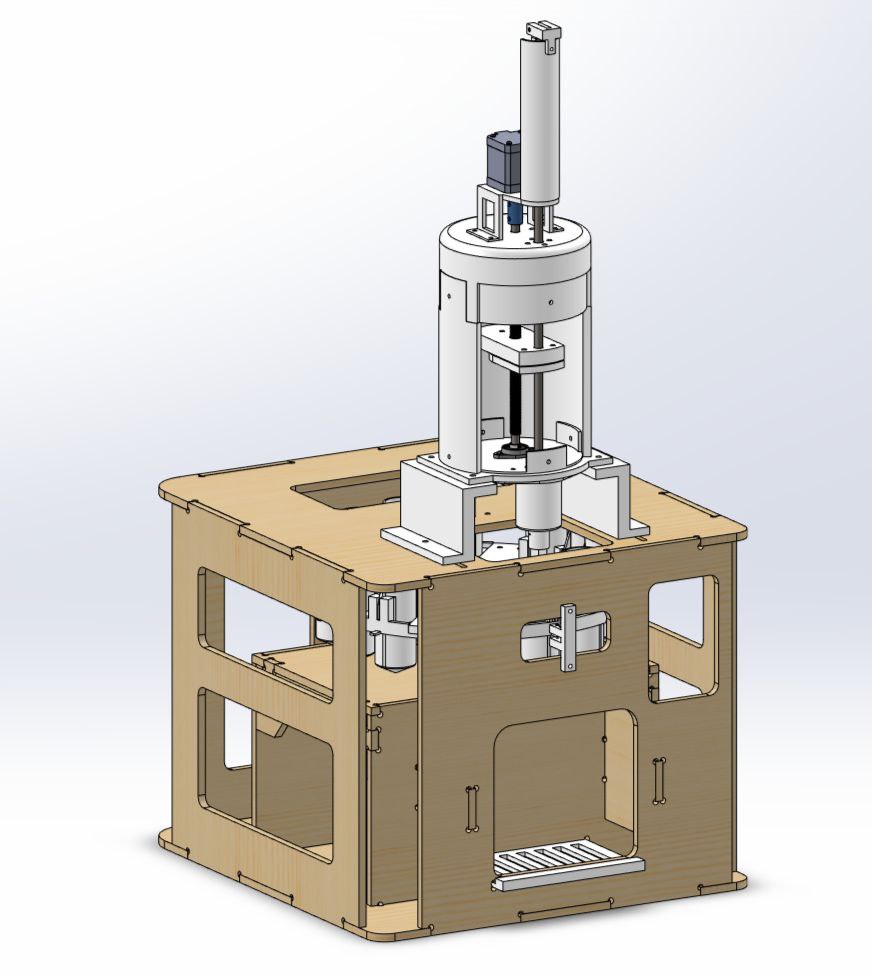}
    \caption{3D model of a smart coffee machine.}
    \label{fig:3d_Cofee_Machine}
\end{minipage}
\end{figure}

\paragraph{Brief description of the coffee machine.} A coffee machine that can make the perfect coffee. By varying the proportions of strong Robusta beans, which give a bitter "Starbucks flavor", and mild Arabica beans, we can find the perfect level of bitterness and coffee strength. By varying the amount of milk and cream we can find the right level of milkiness and fat content. We can also adjust the amount of other ingredients such as sugar, ice, lemon juice, chocolate, different syrups to make sure that the customer will definitely like this coffee.

\subsection{Order Oracle: a zero-order oracle close to reality}
In this Subsection, we would like to show that the oracle concept considered in this paper is perhaps the closest to reality. 

One of the key criteria for evaluating optimization algorithms is oracle complexity. Oracles are commonly used in theoretical estimates, where they offer insights into the function's behavior. For instance, the first-order oracle is prevalent in machine learning literature: the authors of \cite{Gorbunov_2020, Gurbuzbalaban_2021, Huang_2022} use the oracle to obtain the gradient value $\nabla f(x_k)$ of a function at a given point $x_k$. And in \cite{Stich_2020, Ajalloeian_2020, Glasgow_2022}, the authors assume some adversarial environment where the oracle is inaccurate, i.e., the oracle produces the gradient of the function at a given point with some adversarial noise (adversarial refers to the noise that accumulates over iterations). The authors of \cite{Nesterov_2012, Lee_2013, Mangold_2023} also refer their oracles that produce only the gradient coordinate of the function $\nabla_i f(x_k)$ at a given point $x_k$ to a first-order oracle. However, this oracle formally can also be referred to the so-called gradient-free oracle \cite{Gasnikov_2022}, since the algorithm does not use the whole gradient, but only the directional derivative. The following works \cite{Jiang_2019, Nesterov_2021, Agafonov_2023} use a higher-order oracle to obtain information, for example, about the Hesse matrix of a function at a given point. But there are oracle concepts closer to reality, for example in \cite{Bach_2016, Shamir_2017, Akhavan_2021, Gasnikov_ICML, Kornilov_2023} the authors develop algorithms that use only information about the function value $f(x)$, possibly with some adversarial noise. Such a concept is very common in the field of black-box optimization. In this paper we consider a concept of zero-order oracle: the Order Oracle \eqref{eq:Order_Oracle}, which is even closer to reality, where even the function value is not available to us, but only the order between two functions (the possibility to compare). Moreover, we take into account the presence of noise (which seems to be natural in applied realities) in such an oracle.

\subsection{Why is a convex/concave function being considered?}

In this Subsection, we would like to emphasize the organicity of considering the convexity/concave assumption of the objective function of the original problem~\eqref{eq:init_problem}.

"\textit{In terms of the law of diminishing utility, the utility function is a concave function in ordinary coordinates:}"
In economic science there is a concept of "utility". Usually this term is used when describing a consumer who makes a set of several goods (like a basket in a supermarket). Each such set has some utility for the consumer, and he tries to maximize it. We do not consider a consumer who makes a set of goods, but a consumer who makes the goods themselves from a set of their characteristics. We believe that here the consumer is already maximizing a "preference function" to emphasize the difference with the "utility function". For convenience, we consider all characteristics to be useful (the larger the diagonal of the TV set, the more the consumer likes it). "Harmful" characteristics, such as price, we simply replace with the inverse, because the inverse price 1/p will already be useful (among goods with the same characteristics, it is logical to assume that the consumer will choose the cheaper one). Gossen's first law sounds as follows: "The magnitude [intensity] of pleasure decreases continuously if we continue to satisfy one and the same enjoyment without interruption until satiety is ultimately reached" \cite{Kurz_2016}. It is more commonly referred to as the law of diminishing marginal utility of goods. The decreasing marginal utility of a good actually means that the derivative or gradient of the utility function decreases as the quantity of the "useful attribute" increases. This leads us to the concavity of the utility function. It is therefore natural to assume that \textit{the "preference function" will also be concave}. 

\section{Additional Numerical Experiments}\label{app:add_experiments}
In this Section, we provide additional experiments that solve the problem discussed in Section~\ref{sec:Experiments}. We also give practical recommendations for implementing the accelerated Algorithm~\ref{algo: OrderACDM} in practice.

In Figure~\ref{fig:noise_level}, we investigate effect of adversarial noise $\delta(x,y)$ from deterministic oracle concept~\eqref{eq:Order_Oracle} on a random coordinate descent with order oracle (OrderRCD). We used ($\delta(x,y) = \Delta \cdot \cos{x} \cdot \sin{y}$) as the adversarial deterministic noise, where $\Delta$ (s.t. $|\delta(x,y)|\leq \Delta$) is the maximum noise level. In Figure~\ref{fig:noise_level}, we see that the adversarial noise justifies its name as it accumulates over iterations. In addition, we observe that the convergence of the non-accelerated coordinate method depends directly on the maximum noise level~$\Delta$, namely, the lower the noise level, the more accurately the algorithm converges. And finally we see that our theoretical results from Section~\ref{sec:non_acc_methods} are confirmed, which indicate that the asymptote to which the algorithm converges can be controlled by the maximum noise level~$\Delta$, for example, in this case, when $\alpha = 0$, the condition for achieving the desired accuracy~$\varepsilon$ looks as follows: ${\Delta \leq \mualpha \varepsilon / d}$. Or we can rephrase this condition: the OrderRCD has a linear convergence rate  to the asymptote  depending on level~noise~$\Delta$.

\begin{figure}[H]
\centering
\begin{minipage}{.48\textwidth}
  \centering
  \includegraphics[width=0.75\linewidth]{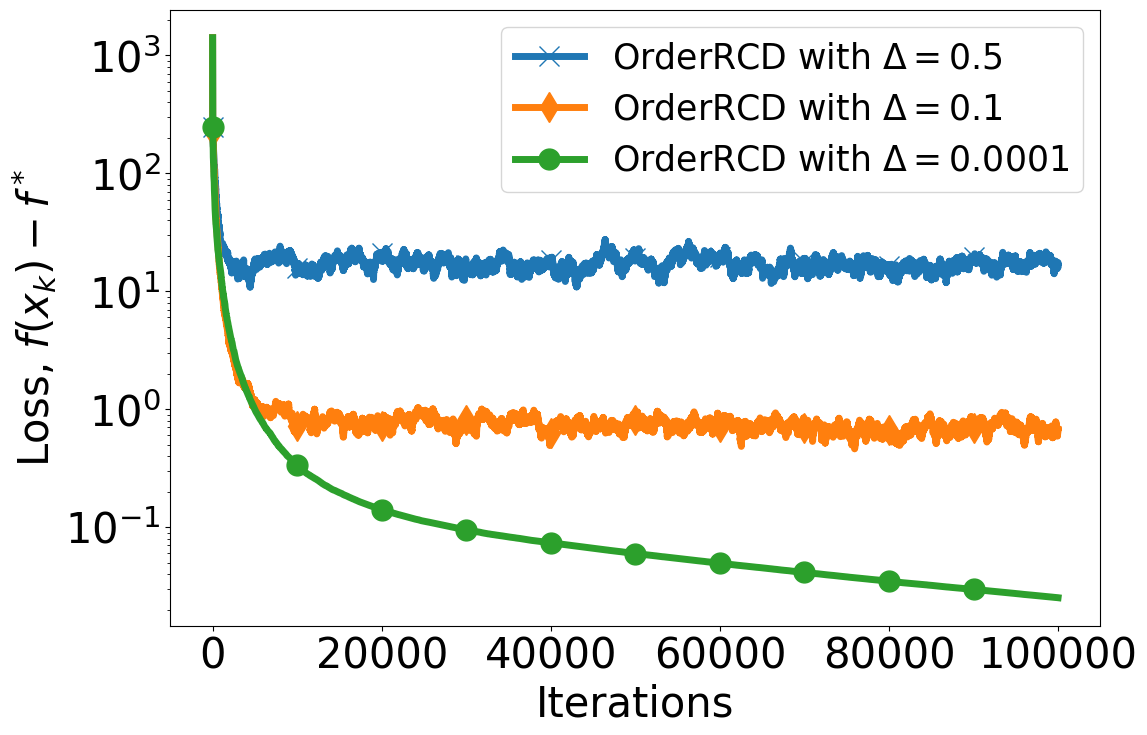}
    \caption{Effects of adversarial noise from the Order Oracle \eqref{eq:Order_Oracle} on the convergence of random coordinate descent (OrderRCD). Here we optimize $f(x)$ with the parameters: $d=100$ (dimensional of problem), $\Delta = \{0.5, 0.1, 0.0001 \}$ (maximum noise level).}
    \label{fig:noise_level}
\end{minipage}%
\begin{minipage}{.04\textwidth}
\phantom{02}
\end{minipage}%
\begin{minipage}{.48\textwidth}
  \centering
  \includegraphics[width=0.75\linewidth]{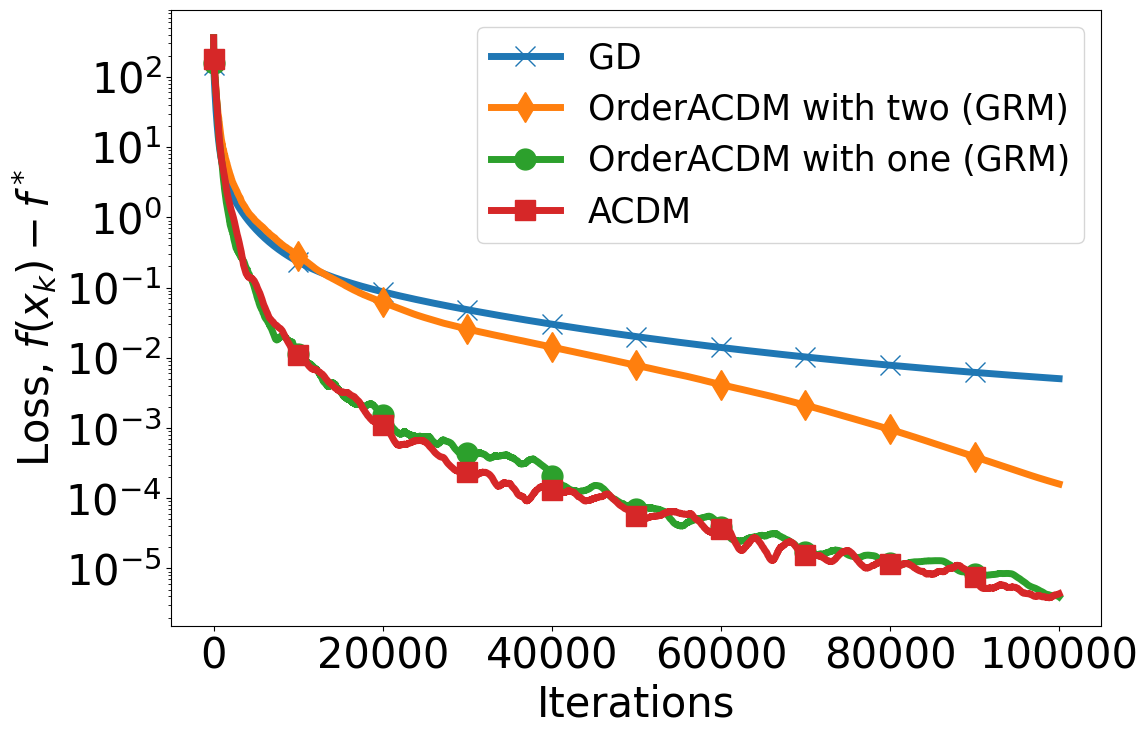}
    \caption{Effect of the second of golden ratio method on the convergence of accelerated coordinate descent with order oracle. Here we optimize $f(x)$ with the parameters: $d=100$ (dimensional of problem), ${\Delta = 0}$ (maximum noise level in the Order Oracle).}
    \label{fig:accelerated_methods}
\end{minipage}
\end{figure}

In Figure \ref{fig:accelerated_methods}, we illustrate the advantage of employing Algorithm~\ref{algo: OrderACDM} with one method of line search ($\zeta_k = 0$). We can observe that the method proposed in Section \ref{sec:acc_method}, the accelerated coordinate descent with order oracle (OrderACDM with two GRM), indeed qualifies as accelerated (thus affirming the theoretical findings of Section \ref{sec:acc_method}) as it outperforms gradient descent (GD), which can be considered a boundary between non-accelerated and accelerated coordinate methods. However, this algorithm significantly lags behind the accelerated coordinate descent method \citep[ACDM,][]{Nesterov_2017}. The reason may be the delayed momentum effect caused by using the golden ratio method (GRM) a second time (see line 10 of Algorithm~\ref{algo: OrderACDM}). Addressing this may involve utilizing the golden section method only once per iteration in Algorithm~\ref{algo: OrderACDM} (that is, substitute $\zeta_k = 0$ into line 10). Indeed, we observe that when using the golden section method once per iteration in Algorithm~\ref{algo: OrderACDM}, the OrderACDM enhances convergence rate and does not fall behind its first-order counterpart (ACDM) which confirms our theoretical results. That is why we recommend to utilize in practice 
accelerated coordinate descent method with order oracle (OrderACDM, see Algorithm~\ref{algo: OrderACDM}) with only one the golden ratio method (GRM) per iteration.

\paragraph{Technical Information.} All experiments were performed on an INTEL CORE i5 2.10 GHz processor. The performance of each Figure depended on the particular algorithm, for example, \textit{gradient descent} (GD) performed 40000 iterations in 0.1 second, while \textit{random coordinate descent} (RCD) and \textit{accelerated coordinate descent method} (ACDM) performed 40000 iterations in 0.2 seconds and 0.4 seconds respectively. But Algorithms~\ref{algo: OrderRCD} and \ref{algo: OrderACDM} proposed in this paper, by virtue of using linear search at each iteration performed 40000 iterations in 02:02 minutes and 02:13~minutes~respectively.

\section{Auxiliary Results}
In this section we provide auxiliary materials that are used in the proof of Theorems.
\subsection{Basic inequalities and assumptions}
\paragraph{Basic inequalities.} For all $a,b \in \mathbb{R}^d$ ($d \geq 1$) the following equality holds:
\begin{equation}
    \label{eq:qudrat_raznosti}
    \| a \|^2 + \| b \|^2 = 2 \dotprod{a}{b} + \|a - b \|^2,
\end{equation}
\begin{equation}
    \label{eq:scalar_product_bound}
    \dotprod{a}{b} \leq \| a \| \cdot \| b \|.
\end{equation}

\paragraph{Coordinate-Lipschitz-smoothness.}Throughout this paper, we assume that the smoothness condition (Assumption \ref{ass:smooth}) is satisfied. This inequality can be represented in the equivalent form:
\begin{equation}
    \label{eq:ass_smooth}
    f(x+ h \ee_i) \leq f(x) + h \nabla_{i} f(x) + \frac{L_i h^2}{2},
\end{equation}
where $L_1, L_2, ..., L_d >0$ for any $i \in [d], x \in \mathbb{R}^d$ and $h \in \mathbb{R}$.

\paragraph{Lipschitz-smoothness.}To prove Theorem~\ref{th:OrderACDM}, we additionally assume $L_{[1-\alpha]}$-smoothness w.r.t. the norm $\norms{\cdot}$:
\begin{equation}
    \label{eq:ass_norm_smooth}
    f(y) \leq f(x) + \dotprod{\nabla f(x)}{y - x} + \frac{L_{[1-\alpha]}}{2} \norms{y - x}^2, \quad \forall x,y \in \mathbb{R}^d.
\end{equation}

\subsection{The Golden Ratio Method (GRM)}\label{sec:Appendix_GRM}

Algorithms \ref{algo: OrderRCD} and \ref{algo: OrderACDM}, presented in Section~\ref{sec:non_acc_methods}~and~\ref{sec:acc_method}, respectively, use the Golden Ratio Method (GRM) at least once per iteration. This method utilizes the oracle concept \eqref{eq:Order_Oracle} considered in this paper and has the following form (See Algorithm \ref{algo: GRM}).

\begin{algorithm}[H]
    \caption{Golden Ratio Method (GRM)}
    \label{algo: GRM}
    \begin{algorithmic}[1]
    \STATE {\bfseries Input:} Interval $[a, b]$
    \STATE {\bfseries Initialization:} Choose constants $\epsilon > 0$ (desired accuracy),  put the constant $\rho = \frac{1}{\Phi} = \frac{\sqrt{5} - 1}{2}$
    \STATE $y \gets a + (1 - \rho)(b - a)$
    \STATE $z \gets a + \rho(b - a)$
    \WHILE{$b - a > \epsilon$}
        \IF{$\phi(y,z) = -1$}
            \STATE $b \gets z$
            \STATE $z \gets y$
            \STATE $y \gets a + (1 - \rho)(b - a)$
        \ELSE
            \STATE $a \gets y$
            \STATE  $y \gets z$
            \STATE $z \gets a + \rho(b - a)$
        \ENDIF
    \ENDWHILE
   \STATE {\bfseries Return:} $\frac{a + b}{2}$
    \end{algorithmic}
\end{algorithm}

We utilize the Golden Ratio Method to find a solution to the following one-dimensional problem:

\begin{equation*}
    \eta_k = \argmin_{\eta \in \mathbb{R}} f(x_k + \eta \ee_{i_k}).
\end{equation*}

Using the well-known fact about the golden ratio method that GRM is required to do $N = \Obound{\log \frac{1}{\epsilon}}$ (where $\epsilon$ is the accuracy of the solution to the linear search problem (by function)), we derive the following corollaries from the solution of this problem:
\begin{itemize}
    \item In Section~\ref{sec:acc_method} (Accelerated Algorithms), for simplicity, we consider the scenario when the Order Oracle \eqref{eq:Order_Oracle} is not subject to an adversarial noise ($\Delta = 0$) and the golden ratio method solves the inner problem exactly ($\epsilon \simeq 0$). Then we can observe the following:
    \begin{equation}
    \label{eq:col_without_noise}
        f(x_k + \eta_k \ee_{i_k}) \leq f(x_k + \eta \ee_{i_k}), \quad \quad \quad \forall \eta \in \mathbb{R}.
    \end{equation}
    \item In Section~\ref{sec:non_acc_methods} ("Non-Accelerated Algorithms"), we consider the scenario when the Order Oracle~\eqref{eq:Order_Oracle} is subjected to an adversarial noise ($\Delta > 0$). Then from the convergence results ( by function) of the golden ratio method (GRM) we can observe the following:
    \begin{equation}
    \label{eq:coll_with_noise}
        f(x_k + \eta_k \ee_{i_k}) \leq f(x_k + \eta \ee_{i_k}) + \epsilon + c \Phi \Delta, \quad \quad \quad \forall \eta \in \mathbb{R},
    \end{equation}
    where $c$ is some constant, $\epsilon$ is the GRM accuracy (by function) and $\Phi = \frac{1 + \sqrt{5}}{2}$~is~golden~ratio.
\end{itemize}
It is worth noting that we consider convergence of the golden ratio in terms of function, because we assume that our Order Oracle may be subject to adversarial noise. If we talk about convergence by argument, there may be no convergence at all with a noisy concept of the Order Oracle. Thus, in the final corollary~\eqref{eq:coll_with_noise}, we consider the scenario where adversarial noise accumulates over iterations, resulting in the following observation: the golden ratio method converges towards the~$\Obound{\Delta}$~asymptote.

\section{Proof of Convergence for Non-Accelerated Algorithm~\ref{algo: OrderRCD}}
In this section, we furnish the omitted proofs for the theorems presented in Section \ref{sec:non_acc_methods}.

\subsection{Proof of Theorem \ref{th:non_convex}}\label{app:appendix_nonconvex}
From Assumption \ref{ass:smooth} we obtain:
\begin{align}
    f\left( x_k + \eta_k \ee_{i_k} \right) - f(x_k) &\overset{\eqref{eq:coll_with_noise}}{\leq} f\left( x_k - \frac{1}{L_{i_k}} \nabla_{i_k} f(x_k) \ee_{i_k} \right) - f(x_k) + \epsilon + c \Phi \Delta \nonumber \\
    &\overset{\eqref{eq:ass_smooth}}{\leq} - \frac{1}{L_{i_k}} (\nabla_{i_k} f(x_k))^2 + \frac{1}{2L_{i_k}} (\nabla_{i_k} f(x_k))^2 + \epsilon + c \Phi \Delta \nonumber \\
    & = - \frac{1}{2L_{i_k}} (\nabla_{i_k} f(x_k))^2 + \epsilon + c \Phi \Delta, \label{eq:nonconv_1}
\end{align}
where $\eta_k = \argmin_{\eta \in \mathbb{R}} f(x_k + \eta \ee_{i_k})$. We use this as follows:
\begin{align*}
    \expect{f(x_{k+1})} - f (x_k) &=  \expect{f\left( x_k + \eta_k \ee_{i_k} \right)} - f (x_k) \\
    &\overset{\eqref{eq:distibution}}{=}  \sum_{i=1}^d p_{\alpha}(i) \left(f\left( x_k + \eta_k \ee_{i_k} \right) - f(x_k) \right)\\
    &\overset{\eqref{eq:nonconv_1}}{\leq}  - \sum_{i=1}^d p_{\alpha}(i) \frac{1}{2L_{i}} (\nabla_{i} f(x_k))^2 + \sum_{i=1}^d p_{\alpha}(i) \left( \epsilon + c \Phi \Delta\right)\\
    &= - \frac{1}{2 S_\alpha} \left( \norms{\nabla f(x_k)}^* \right)^2 + \epsilon + c \Phi \Delta.
\end{align*}
Rearranging the terms and summing over all $k$, we have
\begin{align*}
    \sum_{k=0}^{N-1} \frac{1}{2 S_\alpha} \left( \norms{\nabla f(x_k)}^* \right)^2 &\leq \sum_{k=0}^{N-1} \left( F_k - F_{k+1} \right) + \sum_{k=0}^{N-1} \epsilon + \sum_{k=0}^{N-1} c \Phi \Delta \\
    &\leq F_0 + F_{N} + N \epsilon + N c \Phi \Delta \\
    &\leq F_0 + N \epsilon + N c \Phi \Delta,
\end{align*}
where $F_{k} = \expect{f(x_{k})} - f(x^*)$.

Dividing both sides by number of iterations $N$, we obtain the convergence rate for the non-convex case:
\begin{align*}
    \frac{1}{N} \sum_{k=0}^{N-1}  \left( \norms{\nabla f(x_k)}^* \right)^2 & \leq \frac{2 S_\alpha}{N }F_0 + 2 S_\alpha \epsilon + 2 S_\alpha c \Phi \Delta.
\end{align*}
This convergence results imply the existence of a point $k \in [N]$ where holds true:
\begin{equation*}
    \left( \norms{\nabla f(x_k)}^* \right)^2 \leq \Obound{\frac{S_\alpha F_0}{N} + S_\alpha \epsilon + S_\alpha \Phi \Delta}.
\end{equation*}

Then, achieving the desired accuracy $\varepsilon$, where $\norms{\nabla f(x_k)}^* \leq \varepsilon$, requires 
\begin{equation*}
    N = \Obound{\frac{S_\alpha F_0}{\varepsilon^2}}, \quad \quad \quad T = \OboundTilde{\frac{S_\alpha F_0}{\varepsilon^2}}
\end{equation*}
iterations and oracle calls respectively, provided the maximum noise does not exceed $\Delta \lesssim \varepsilon^2/S_\alpha.$

\subsection{Proof of Theorem~\ref{th:convex}}\label{app:appendix_convex}
From Assumption \ref{ass:smooth} we obtain:
\begin{align}
    f\left( x_k + \eta_k \ee_{i_k} \right) - f(x_k) &\overset{\eqref{eq:coll_with_noise}}{\leq} f\left( x_k - \frac{1}{L_{i_k}} \nabla_{i_k} f(x_k) \ee_{i_k} \right) - f(x_k) + \epsilon + c \Phi \Delta \nonumber \\
    &\overset{\eqref{eq:ass_smooth}}{\leq} - \frac{1}{L_{i_k}} (\nabla_{i_k} f(x_k))^2 + \frac{1}{2L_{i_k}} (\nabla_{i_k} f(x_k))^2 + \epsilon + c \Phi \Delta \nonumber \\
    & = - \frac{1}{2L_{i_k}} (\nabla_{i_k} f(x_k))^2 + \epsilon + c \Phi \Delta, \label{eq:conv_1}
\end{align}
where $\eta_k = \argmin_{\eta \in \mathbb{R}} f(x_k + \eta \ee_{i_k})$. We use this as follows:
\begin{align}
    \expect{f(x_{k+1})} - f (x_k) &=  \expect{f\left( x_k + \eta_k \ee_{i_k} \right)} - f (x_k) \nonumber\\
    &\overset{\eqref{eq:distibution}}{=}  \sum_{i=1}^d p_{\alpha}(i) \left(f\left( x_k + \eta_k \ee_{i_k} \right) - f(x_k) \right) \nonumber\\
    &\overset{\eqref{eq:conv_1}}{\leq}  - \sum_{i=1}^d p_{\alpha}(i) \frac{1}{2L_{i}} (\nabla_{i} f(x_k))^2 + \sum_{i=1}^d p_{\alpha}(i) \left( \epsilon + c \Phi \Delta \right) \nonumber\\
    &= - \frac{1}{2 S_\alpha} \left( \norms{\nabla f(x_k)}^* \right)^2 + \epsilon + c \Phi \Delta. \label{eq:conv_2}
\end{align}
Denote $F_k = \expect{f(x_k)} - f(x^*)$. Note that the above calculation can be used to show ${f(x_{k+1}) \leq f(x_k)}$, then we have
\begin{align}
    F_k &\overset{\circledOne}{\leq} \dotprod{\nabla f(x_k)}{x_k - x^*} \nonumber \\
    &\overset{\eqref{eq:scalar_product_bound}}{\leq} \norms{x_{k} - x^*} \norms{\nabla f(x_{k})}^* \nonumber\\
    &\leq R_{[1-\alpha]} \norms{\nabla f(x_{k})}^* \label{eq:conv_3},
\end{align}
where in $\circledOne$ we used Assumption~\ref{ass:sc} with $\mualpha = 0$, and a new notation for convenience, which looks as follows  ${R_{[1-\alpha]} = \sup_{x \in \mathbb{R}^d: f(x) \leq f(x_0)} \norms{x - x^*}}$.

Then substituting \eqref{eq:conv_3} into \eqref{eq:conv_2} we obtain:
\begin{equation*}
    F_{k+1} \leq F_k - \frac{1}{2  S_{\alpha} R^2_{[1-\alpha]}} F_k^2 + \epsilon + c \Phi \Delta.
\end{equation*}

Rewriting this inequality, we obtain:
\begin{equation*}
    \frac{1}{2  S_{\alpha} R^2_{[1-\alpha]}} F_k^2 \leq F_k - F_{k+1} + \epsilon + c \Phi \Delta.
\end{equation*}
Next, we divide both sides by $F_{k+1} F_k$:
\begin{equation*}
    \frac{1}{2  S_{\alpha} R^2_{[1-\alpha]}} \cdot \frac{F_k}{F_{k+1}}  \leq  \frac{1}{F_{k+1}} - \frac{1}{F_{k}} + \frac{ \epsilon + c \Phi \Delta }{F_{k+1} F_k}.
\end{equation*}
Using the fact that $\frac{1}{2  S_{\alpha} R^2_{[1-\alpha]}} \leq \frac{1}{2  S_{\alpha} R^2_{[1-\alpha]}} \frac{F_k}{F_{k+1}} $ we obtain the following:
\begin{equation*}
    \frac{1}{2  S_{\alpha} R^2_{[1-\alpha]}}  \leq  \frac{1}{F_{k+1}} - \frac{1}{F_{k}} + \frac{\epsilon + c \Phi \Delta}{F_{k+1} F_k}.
\end{equation*}
When summing over all $k$
\begin{equation*}
    \sum_{k = 0}^{N-1} \frac{1}{2  S_{\alpha} R^2_{[1-\alpha]}}  \leq  \sum_{k = 0}^{N-1} \left(\frac{1}{F_{k+1}} - \frac{1}{F_{k}}\right) + \sum_{k = 0}^{N-1} \frac{\epsilon + c \Phi \Delta}{F_{k+1} F_k},
\end{equation*}
we get:
\begin{align*}
    N \frac{1}{2  S_{\alpha} R^2_{[1-\alpha]}}  &\leq   \frac{1}{F_{N}} - \frac{1}{F_{0}} +  \frac{N \left( \epsilon + c \Phi \Delta \right)}{F_{N} F_{N-1}} \\
    &\leq \frac{1}{F_{N}} +  \frac{N \left( \epsilon + c \Phi \Delta \right)}{F_{N} F_{N-1}}.
\end{align*}
Taking into account the fact that $F_{N-1} = \expect{f(x_{N-1})} - f(x^*) \geq \varepsilon$ and rewriting the expression  we obtain the convergence rate for the convex case:
\begin{equation*}
    \expect{f(x_{N})} - f(x^*) \leq \frac{2  S_{\alpha} R^2_{[1-\alpha]}}{N} + \frac{2  S_{\alpha} R^2_{[1-\alpha]}}{\varepsilon} \left( \epsilon + c \Phi \Delta \right).
\end{equation*}
Then, achieving the desired accuracy $\varepsilon$, where $\expect{f(x_{N})} - f(x^*) \leq \varepsilon$, requires 
\begin{equation*}
    N = \Obound{\frac{S_\alpha R^2_{[1-\alpha]}}{\varepsilon}}, \quad \quad \quad T = \OboundTilde{\frac{S_\alpha R^2_{[1-\alpha]}}{\varepsilon}}
\end{equation*}
iterations and oracle calls respectively, provided the maximum noise does not exceed the following value $\Delta \lesssim \varepsilon^2/ (S_\alpha R^2_{[1-\alpha]}).$

\subsection{Proof of Theorem~\ref{th:sc}}\label{app:apendix_sc}
From Assumption \ref{ass:smooth} we obtain:
\begin{align}
    f\left( x_k + \eta_k \ee_{i_k} \right) - f(x_k) &\overset{\eqref{eq:coll_with_noise}}{\leq} f\left( x_k - \frac{1}{L_{i_k}} \nabla_{i_k} f(x_k) \ee_{i_k} \right) - f(x_k) + \epsilon + c \Phi \Delta \nonumber \\
    &\overset{\eqref{eq:ass_smooth}}{\leq} - \frac{1}{L_{i_k}} (\nabla_{i_k} f(x_k))^2 + \frac{1}{2L_{i_k}} (\nabla_{i_k} f(x_k))^2 + \epsilon + c \Phi \Delta \nonumber \\
    & = - \frac{1}{2L_{i_k}} (\nabla_{i_k} f(x_k))^2 + \epsilon + c \Phi \Delta, \label{eq:sc_1}
\end{align}
where $\eta_k = \argmin_{\eta \in \mathbb{R}} f(x_k + \eta \ee_{i_k})$. We use this as follows:
\begin{align}
    \expect{f(x_{k+1})} - f (x_k) &=  \expect{f\left( x_k + \eta_k \ee_{i_k} \right)} - f (x_k) \nonumber\\
    &\overset{\eqref{eq:distibution}}{=}  \sum_{i=1}^d p_{\alpha}(i) \left(f\left( x_k + \eta_k \ee_{i_k} \right) - f(x_k) \right) \nonumber\\
    &\overset{\eqref{eq:sc_1}}{\leq}  - \sum_{i=1}^d p_{\alpha}(i) \frac{1}{2L_{i}} (\nabla_{i} f(x_k))^2 + \sum_{i=1}^d p_{\alpha}(i) \left( 
 \epsilon + c \Phi \Delta \right)\nonumber\\
    &= - \frac{1}{2 S_\alpha} \left( \norms{\nabla f(x_k)}^* \right)^2 + \epsilon + c \Phi \Delta. \label{eq:sc_2}
\end{align}

By strong convexity, we have
\begin{align*}
    f(x_k) - f(x^*) &\overset{\circledOne}{\leq} \dotprod{\nabla f(x_k)}{x_k - x^*} - \frac{\mualpha}{2} \norms{x_k - x^*}\\
    &\overset{\eqref{eq:scalar_product_bound}}{\leq} \norms{\nabla f(x_k)}^* \cdot \norms{x_k - x^*} - \frac{\mualpha}{2} \norms{x_k - x^*}\\
    &\leq \frac{1}{\mualpha} \left(\norms{\nabla f(x_k)}^*\right)^2,
\end{align*}
where in $\circledOne$ we used Assumption~\ref{ass:sc} with $\mualpha > 0$. Then, using this inequality in \eqref{eq:sc_2} we have
\begin{equation*}
    \expect{f(x_{k+1})} - f (x^*) \leq \left(1 - \frac{\mualpha}{2 S_\alpha} \right) \left( f(x_{k}) - f (x^*) \right) + \epsilon + c \Phi \Delta.
\end{equation*}
Applying recursion we obtain a linear convergence rate:
\begin{equation*}
    \expect{f(x_{N})} - f (x^*) \leq \left(1 - \frac{\mualpha}{2 S_\alpha} \right)^N \left( f(x_{0}) - f (x^*) \right) + \frac{2 S_\alpha \epsilon}{\mualpha} + \frac{2 c S_\alpha \Phi \Delta}{\mualpha}.
\end{equation*}
Then, achieving the desired accuracy $\varepsilon$, where $\expect{f(x_{N})} - f(x^*) \leq \varepsilon$, requires 
\begin{equation*}
    N = \Obound{\frac{S_\alpha}{\mualpha} \log \frac{1}{\varepsilon}}, \quad \quad \quad T = \OboundTilde{\frac{S_\alpha}{\mualpha} \log \frac{1}{\varepsilon}}
\end{equation*}
iterations and oracle calls respectively, provided the maximum noise does not exceed the following value $\Delta \lesssim \mualpha \varepsilon/ S_\alpha.$

\section{Proof of Convergence for Accelerated Algorithm~\ref{algo: OrderACDM}} \label{app:appendix_accelerated}
Denote $\omega_k = (1-\beta_k) z_k + \beta_k y_k$. Then
\begin{equation*}
    y_k = \frac{(1-\alpha_k)x_k}{1 - \alpha_k\beta_k} + \frac{\alpha_k(1-\beta_k)}{1 - \alpha_k\beta_k} \cdot \frac{\omega_k - \beta_k y_k}{1 - \beta_k} = \frac{(1-\alpha_k)x_k + \alpha_k \omega_k}{1 - \alpha_k\beta_k} - \frac{\alpha_k \beta_k y_k}{1 - \alpha_k\beta_k}.
\end{equation*}

Thus, in method we have the following representation:
\begin{equation}
    \label{eq:y_k}
        y_k = (1-\alpha_k)x_k + \alpha_k \omega_k.
\end{equation}
Let the solution of initial problem denote $x^* = x_* = \argmin_{x \in\mathbb{R}^d} f(x)$, then from Assumption \ref{ass:smooth} we obtain:
\begin{align}
    &\frac{2 L_{i_k}^{\alpha} a_{k+1}^2 }{ B^2_{k+1} p^2_{\beta}(i_k)} \left(  f(\underbrace{y_k + \eta_k \ee_{i_k}}_{x_{k+1}}) - f(y_k)\right)  \overset{\eqref{eq:col_without_noise}}{\leq} \frac{2 L_{i_k}^{\alpha} a_{k+1}^2 }{ B^2_{k+1} p^2_{\beta}(i_k)} \left(  f\left(y_k - \frac{1}{L_{i_k}} \nabla_{i_k} f(y_k) \ee_{i_k}\right) - f(y_k)\right)  \nonumber\\
    &\overset{\eqref{eq:ass_smooth}}{\leq}
    - \frac{ a_{k+1}^2 }{L_{i_k}^{1-\alpha} B^2_{k+1} p^2_{\beta}(i_k)} \left( \nabla_{i_k} f(y_k)\right)^2 \pm  \norms{\omega_k - x_*}^2  \nonumber\\
    & =  - L_{i_k}^{1-\alpha} \left[  (\omega_k^{(i_k)} - x_*^{(i_k)})^2 +  \left( \frac{ a_{k+1} }{L_{i_k}^{1-\alpha} B_{k+1} p_{\beta}(i_k)} \nabla_{i_k} f(y_k)\right)^2 \right]  - \sum_{i \neq i_k} L_{i}^{1-\alpha} (\omega_k^{(i)} - x_*^{(i)})^2
    \nonumber\\
    & \quad\quad + \norms{\omega_k - x_*}^2
    \nonumber\\
    &\overset{\eqref{eq:qudrat_raznosti}}{=} - L_{i_k}^{1-\alpha} \left[  \left(\omega_k^{(i_k)} - x_*^{(i_k)} - \frac{ a_{k+1} }{L_{i_k}^{1-\alpha} B_{k+1} p_{\beta}(i_k)} \nabla_{i_k} f(y_k)\right)^2 + \frac{ 2 a_{k+1} }{L_{i_k}^{1-\alpha} B_{k+1} p_{\beta}(i_k)} \dotprod{\nabla_{i_k} f(y_k)}{\omega_k^{(i_k)} - x_*^{(i_k)}}  \right] 
    \nonumber\\
    & \quad\quad - \sum_{i \neq i_k} L_{i}^{1-\alpha} (\omega_k^{(i)} - x_*^{(i)})^2 + \norms{\omega_k - x_*}^2 
    \nonumber\\
    &= -\norms{\omega_k - \frac{ a_{k+1} }{L_{i_k}^{1-\alpha} B_{k+1} p_{\beta}(i_k)} \nabla_{i_k} f(y_k) \ee_{i_k}  - x_*}^2 + \norms{\omega_k - x_*}^2 
    \nonumber\\
    & \quad\quad - \frac{ 2 a_{k+1} }{B_{k+1} p_{\beta}(i_k)} \dotprod{\nabla_{i_k} f(y_k)}{\omega_k^{(i_k)} - x_*^{(i_k)}} + \frac{2}{\mualpha}\dotprod{\nabla f(x_*)}{z_{k+1} - x_*} 
    \nonumber\\
    &\overset{\circledOne}{\leq} -\norms{\omega_k - \frac{ a_{k+1} }{L_{i_k}^{1-\alpha} B_{k+1} p_{\beta}(i_k)} \nabla_{i_k} f(y_k) \ee_{i_k}  - x_*}^2  + \norms{\omega_k - x_*}^2 
    \nonumber\\
    & \quad\quad - \frac{ 2 a_{k+1} }{B_{k+1} p_{\beta}(i_k)} \dotprod{\nabla_{i_k} f(y_k)}{\omega_k^{(i_k)} - x_*^{(i_k)}} - \norms{z_{k+1} - x_*}^2 + \frac{2}{\mualpha} \left( f(z_{k+1}) - f(x_{*}) \right)  
    \nonumber\\
    &\overset{\eqref{eq:ass_norm_smooth}}{\leq} - \norms{z_{k+1} - x_*}^2 + \norms{\omega_k - x_*}^2 - \frac{ 2 a_{k+1} }{B_{k+1} p_{\beta}(i_k)} \dotprod{\nabla_{i_k} f(y_k)}{\omega_k^{(i_k)} - x_*^{(i_k)}}
    \nonumber\\
    & \quad\quad + \frac{2}{\mualpha} \left[ f(z_{k+1}) - f(x_{*}) \right] + \frac{2}{L_{1 - \alpha}} \left[ f(x_*) - f\left(\omega_k - \frac{ a_{k+1} }{L_{i_k}^{1-\alpha} B_{k+1} p_{\beta}(i_k)} \nabla_{i_k} f(y_k) \ee_{i_k}\right) \right]
    \nonumber\\
    & \quad\quad - \frac{2}{L_{1-\alpha}} \dotprod{\nabla f(x_*)}{\omega_k - \frac{ a_{k+1} }{L_{i_k}^{1-\alpha} B_{k+1} p_{\beta}(i_k)} \nabla_{i_k} f(y_k) \ee_{i_k} - x_*} 
    \nonumber\\
    &\leq - \norms{z_{k+1} - x_*}^2 + \norms{\omega_k - x_*}^2 - \frac{ 2 a_{k+1} }{B_{k+1} p_{\beta}(i_k)} \dotprod{\nabla_{i_k} f(y_k)}{\omega_k^{(i_k)} - x_*^{(i_k)}}
    \nonumber\\
    & \quad\quad + \frac{2}{\sigma_{1-\alpha}} \left[ f(z_{k+1}) - f(x_{*}) + f(x_*) - f\left(\omega_k - \frac{ a_{k+1} }{L_{i_k}^{1-\alpha} B_{k+1} p_{\beta}(i_k)} \nabla_{i_k} f(y_k) \ee_{i_k}\right) \right] 
    \nonumber\\
    & \overset{\eqref{eq:col_without_noise}}{\leq} - \norms{z_{k+1} - x_*}^2 + \norms{\omega_k - x_*}^2 - \frac{ 2 a_{k+1} }{B_{k+1} p_{\beta}(i_k)} \dotprod{\nabla_{i_k} f(y_k)}{\omega_k^{(i_k)} - x_*^{(i_k)}}, 
    \label{eq:main}
\end{align}
where in $\circledOne$ we used Assumption~\ref{ass:sc} with $\mualpha > 0$.

Denote $r_k^2 = \norms{z_k - x_*}^2$ and $\omega_k = (1-\beta_k) z_k + \beta_k y_k$, then due to convexity of the norm function it follows:
\begin{equation}
\label{eq:eq2}
    \norms{\omega_k - x_{*}}^2 \leq (1-\beta_k) \norms{z_k - x_*}^2 + \beta_k \norms{y_k - x_*}^2.
\end{equation}
Substituting \eqref{eq:eq2} into \eqref{eq:main} we obtain
\begin{align*}
    B_{k+1} &r^2_{k+1} \overset{\eqref{eq:eq2}}{\leq} (1-\beta_k) B_{k+1} r^2_k + \beta_k B_{k+1} \norms{y_k - x_*}^2 \\
    & \quad \quad - \frac{2 a_{k+1}}{p_{\beta}(i_k)} \dotprod{ \nabla_{i_k} f(y_k) \ee_{i_k}}{\omega_k^{i_k} - x_*^{i_k}}  + \frac{2 L_{i_k}^{\alpha} a_{k+1}^2 }{ B_{k+1} p^2_{\beta}(i_k)} \left( f(y_k) - f(x_{k+1})\right)  \\
    &\overset{\circledOne}{\leq} B_{k} r^2_k + \beta_k B_{k+1} \norms{y_k - x_*}^2 - \frac{2 a_{k+1}}{ p_{\beta}(i_k)} \dotprod{ \nabla_{i_k} f(y_k) \ee_{i_k}}{\omega_k^{i_k} - x_*^{i_k}} + \frac{2 L_{i_k}^{\alpha} a_{k+1}^2 }{ B_{k+1} p^2_{\beta}(i_k)} \left( f(y_k) - f(x_{k+1})\right)\\
    &\overset{\eqref{eq:distibution}}{\leq} B_{k} r^2_k + \beta_k B_{k+1} \norms{y_k - x_*}^2 - \frac{2 a_{k+1}}{p_{\beta}(i_k)} \dotprod{ \nabla_{i_k} f(y_k) \ee_{i_k}}{\omega_k^{i_k} - x_*^{i_k}} + \frac{2 L_{i_k}^{\alpha} a_{k+1}^2 S_{\beta}^2}{ B_{k+1} L_{i_k}^{2\beta}} \left( f(y_k) - f(x_{k+1})\right)\\
    &\overset{\circledTwo}{=} B_{k} r^2_k + \beta_k B_{k+1} \norms{y_k - x_*}^2 - \frac{2 a_{k+1}}{p_{\beta}(i_k)} \dotprod{ \nabla_{i_k} f(y_k) \ee_{i_k}}{\omega_k^{i_k} - x_*^{i_k}} + \frac{2 a_{k+1}^2 S_{\beta}^2}{ B_{k+1} } \left( f(y_k) - f(x_{k+1})\right),
\end{align*}
where in $\circledOne$ we use that $(1-\beta_k) B_{k+1} = B_{k+1} - \sigma_{1- \alpha} a_{k+1} = B_{k}$, and in $\circledTwo$ we use that $\alpha = 2 \beta$.

Note that $\expect{f(x_{k+1})} = \sum_{i = 1}^{d} p_{\beta}(i) f(x_{k+1})$. Therefore, taking expectation we obtain:
\begin{align*}
    \expect{B_{k+1} r^{2}_{k+1}} &\leq B_{k} r^2_k + \beta_k B_{k+1} \norms{y_k - x_*}^2 - \expect{\frac{2 a_{k+1}}{p_{\beta}(i_k)} \dotprod{ \nabla_{i_k} f(y_k) \ee_{i_k}}{\omega_k^{i_k} - x_*^{i_k}}} \\
    & \quad \quad + \frac{2 a_{k+1}^2 S_{\beta}^2}{ B_{k+1} } \left( f(y_k) - \expect{f(x_{k+1})}\right) \\
    & \leq B_{k} r^2_k + \beta_k B_{k+1} \norms{y_k - x_*}^2 - \sum_{i = 1}^{d} p_{\beta}(i) \frac{2 a_{k+1}}{p_{\beta}(i)} \dotprod{ \nabla_{i} f(y_k) \ee_{i}}{\omega_k^{i} - x_*^{i}} \\
    & \quad \quad + \frac{2 a_{k+1}^2 S_{\beta}^2}{ B_{k+1} } \left( f(y_k) - \expect{f(x_{k+1})}\right) \\
    & = B_{k} r^2_k + \beta_k B_{k+1} \norms{y_k - x_*}^2 + 2 a_{k+1} \dotprod{\nabla f(y_k)}{ x_* - \omega_k} + \frac{2 a_{k+1}^2 S_{\beta}^2}{ B_{k+1} } \left( f(y_k) - \expect{f(x_{k+1})}\right).
\end{align*}
Since $\omega_k \overset{\eqref{eq:y_k}}{=} y_k + \frac{1-\alpha_k}{\alpha_k} (y_k - x_k)$, we obtain
\begin{align}
    2 a_{k+1} &\dotprod{\nabla f(y_k)}{ x_* - \omega_k} = 2 a_{k+1} \dotprod{\nabla f(y_k)}{ x_* - y_k + \frac{1-\alpha_k}{\alpha_k} ( x_k - y_k)}\nonumber \\
    & \overset{\circledOne}{\leq} 2 a_{k+1} \left( f(x_*) - f(y_k) \right) - a_{k+1} \sigma_{1-\alpha} \norms{x_{*} - y_{k}}^2  \nonumber \\
    & \quad \quad + 2 a_{k+1} \frac{1-\alpha_k}{\alpha_k} \left( f(x_k) - f(y_k) \right)
    \nonumber \\
    &\overset{\circledTwo}{=} 2 a_{k+1} f(x_*) - 2 A_{k+1} f(y_k) + 2 A_k f(x_k) - a_{k+1} \sigma_{1-\alpha} \norms{x_{*} - y_{k}}^2, \label{eq:eq3}
\end{align}
where in $\circledOne$ we use Assumption~\ref{ass:sc} with $\mualpha > 0$ and in $\circledTwo$ we use that $a_{k+1} \frac{1-\alpha_k}{\alpha_k} = a_{k+1} \frac{1-\frac{a_{k+1}}{A_{k+1}}}{\frac{a_{k+1}}{A_{k+1}}} = A_{k+1} - a_{k+1} = A_k$.

\begin{align*}
    \expect{B_{k+1} r^{2}_{k+1}}& \leq B_{k} r^2_k + \beta_k B_{k+1} \norms{y_k - x_*}^2 + 2 a_{k+1} \dotprod{\nabla f(y_k)}{ x_* - \omega_k} + \frac{2 a_{k+1}^2 S_{\beta}^2}{ B_{k+1} } \left( f(y_k) - \expect{f(x_{k+1})}\right)\\
    &\overset{\circledOne}{=}
    B_{k} r^2_k + a_{k+1} \sigma_{1-\alpha} \norms{y_k - x_*}^2 + 2 a_{k+1} \dotprod{\nabla f(y_k)}{ x_* - \omega_k} + \frac{2 a_{k+1}^2 S_{\beta}^2}{ B_{k+1} } \left( f(y_k) - \expect{f(x_{k+1})}\right)\\ 
    &\overset{\eqref{eq:eq3}}{\leq}
    B_k r_k^2 + 2 a_{k+1} f(x_*) - 2 A_{k+1} f(y_k) + 2 A_k f(x_k) \pm 2 A_k f(x_*) + \frac{2 a_{k+1}^2 S_{\beta}^2}{ B_{k+1} } \left( f(y_k) - \expect{f(x_{k+1})}\right)\\ 
    &\overset{\circledTwo}{=}
    B_k r_k^2 + 2 a_{k+1} f(x_*) - 2 A_{k+1} f(y_k) + 2 A_k f(x_k) \pm 2 A_k f(x_*) + 2 A_{k+1} \left( f(y_k) - \expect{f(x_{k+1})}\right)\\ 
    &=
    B_k r_k^2 - 2 A_{k+1}\left( \expect{f(x_{k+1})} - f(x_*)\right) + 2 A_{k}\left( f(x_{k}) - f(x_*)\right),
\end{align*}
where in $\circledOne$ we use that $\beta_k = \frac{\sigma_{1-\alpha} a_{k+1}}{B_{l+1}}$, and in $\circledTwo$ we use that $a^2_{k+1} S^2_{\beta} = A_{k+1} B_{k+1}$.

By summing over $k$ we obtain:
\begin{align*}
    2 \sum_{k=0}^{N-1} A_{k+1}\left( \expect{f(x_{k+1})} - f(x_*)\right) \leq  2 \sum_{k=0}^{N-1} A_{k}\left( f(x_{k}) - f(x_*)\right) + \sum_{k=0}^{N-1} B_k r_k^2 - \sum_{k=0}^{N-1} \expect{B_{k+1} r^{2}_{k+1}}.
\end{align*}
\begin{align*}
    2 A_{N}\left( \expect{f(x_{N})} - f(x_*)\right) \leq  2 A_{0}\left( f(x_{0}) - f(x_*)\right) + B_0 r_0^2 - \expect{B_{N} r^{2}_{N}} .
\end{align*}
Using the known facts from \cite{Nesterov_2017} we can estimate the parameters $A_N$ and $B_N$:
\begin{align*}
    A_N & \geq \frac{1}{4 \mualpha} \left[ (1+\gamma)^N - (1-\gamma)^N \right]^2\\
    &\geq \frac{1}{4 \mualpha} \left[ (1+\gamma)^N - 1 \right]^2 \geq \frac{1}{8 \mualpha}  (1+\gamma)^{2N} \geq \frac{1}{8 \mualpha}  (1+\gamma)^{N},\\
    B_N &\geq \frac{1}{4} \left[ (1+\gamma)^N + (1-\gamma)^N \right]^2,
\end{align*}
where $\gamma = \frac{\sqrt{\mualpha}}{2S_{\alpha / 2}}$. Then using $A_0 = 0$ and $B_0 = 1$ we have
\begin{align*}
    A_N \left(\expect{f(x_{N})} - f(x_*)\right) &\leq   2 A_{0}\left( f(x_{0}) - f(x_*)\right) + B_0 r_0^2 - B_N\expect{ r^{2}_{N}}.\\
    & \overset{\circledOne}{\leq} \frac{1}{\mualpha} \left( f(x_0) - f(x_*) - \dotprod{\nabla f(x_*)}{x_0 - x_*} \right)\\
    & = \frac{1}{\mualpha} \left( f(x_0) - f(x_*)\right).
\end{align*}
Let's divide both parts by $A_N$, then we have the convergence rate for accelerated method
\begin{align*}
    \expect{f(x_{N})} - f(x_*) &\leq \frac{1}{\mualpha A_N} \left( f(x_0) - f(x_*)\right)\\
    & \leq 8 (1+\gamma)^{-N} \left( f(x_0) - f(x_*)\right)\\
    & \leq 8 (1-\gamma)^{N} \left( f(x_0) - f(x_*)\right)\\
    & = 8 \left( 1 - \frac{\sqrt{\mualpha}}{2S_{\alpha / 2}} \right)^N \left( f(x_0) - f(x_*)\right).
\end{align*}
Then, achieving the desired accuracy $\varepsilon$, where $\expect{f(x_{N})} - f(x^*) \leq \varepsilon$, requires 
\begin{equation*}
    N = \Obound{\frac{S_{\alpha/2}}{\sqrt{\mualpha}} \log \frac{1}{\varepsilon}}, \quad \quad \quad T = \OboundTilde{\frac{S_{\alpha/2}}{\sqrt{\mualpha}} \log \frac{1}{\varepsilon}}
\end{equation*}
iterations and oracle calls respectively.

\section{Scheme of the Proof of Asymptotic Convergence of the Algorithm with the Stochastic Order Oracle Concept}\label{app:Asymptotic}

In this Section, we give a proof of Theorem~\ref{th:Asymptotic}, which is based on the work of \cite{Polyak_1980}. In order to take advantage of the above work, we need to show that our method \eqref{alg:Stochastic} is a normalized stochastic gradient descent. Therefore, we will first prove two auxiliary Lemmas~\ref{lem:sec5_lem1} and \ref{lem_sec5_lem2} showing that our algorithm is still a normalized stochastic gradient descent, and then proceed to the statement of the Theorem~\ref{th:Asymptotic}. Our reasoning in the Proofs of auxiliary Lemmas is similar to the work of \cite{Saha_2021}.

\begin{lemma}
   Let the function $f$ be $L$-smooth (for all $ x,y \in \mathbb{R}^d$ it holds:  ${f(y,\xi) \leq f(x,\xi) + \dotprod{\nabla f(x,\xi)}{y - x} + \frac{L}{2} \norm{y - x}^2}$), $\gamma = \frac{\norm{\nabla f(x,\xi)}}{\sqrt{d} L}$ and $\ee \in S^d(1)$, then the following holds:
    \begin{equation*}
        \phi (x + \gamma \ee, x - \gamma \ee, \xi) \ee = \textnormal{sign} \left[ \dotprod{\nabla f(x, \xi)}{ \ee} \right] \ee.
    \end{equation*}
\end{lemma}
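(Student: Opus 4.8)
The idea is to restrict $f(\cdot,\xi)$ to the line through $x$ in direction $\ee$ and compare the symmetric finite difference with the directional derivative, using $L$-smoothness to control the error. Put $g(t) := f(x + t\ee, \xi)$ for $t \in \mathbb{R}$; since $\norm{\ee} = 1$, $g$ is differentiable with $g'(t) = \dotprod{\nabla f(x + t\ee, \xi)}{\ee}$, so $g'(0) = \dotprod{\nabla f(x,\xi)}{\ee}$. Note that $\phi(x + \gamma\ee, x - \gamma\ee, \xi) = \textnormal{sign}\left[ f(x+\gamma\ee,\xi) - f(x-\gamma\ee,\xi) \right] = \textnormal{sign}\left[ g(\gamma) - g(-\gamma) \right]$, so the lemma reduces to the one-dimensional claim
\begin{equation*}
    \textnormal{sign}\left[ g(\gamma) - g(-\gamma) \right] = \textnormal{sign}\left[ g'(0) \right],
\end{equation*}
after which multiplying both sides by $\ee$ yields the stated identity (the case $g'(0) = 0$, which has probability zero under the uniform law on $S^d(1)$, is handled separately).

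First I would quantify the gap between the finite difference and the derivative. Applying the quadratic upper bound from $L$-smoothness at $x$ to the point $x+\gamma\ee$ and the matching lower bound $f(y,\xi) \ge f(x,\xi) + \dotprod{\nabla f(x,\xi)}{y-x} - \frac{L}{2}\norm{y-x}^2$ to $x-\gamma\ee$ (equivalently, integrating $g'$ over $[-\gamma,\gamma]$ and using $|g'(t) - g'(0)| \le L|t|$, which is the form of $L$-smoothness actually needed), one obtains
\begin{equation*}
    \left| \bigl(g(\gamma) - g(-\gamma)\bigr) - 2\gamma\, g'(0) \right| \le L\gamma^2 .
\end{equation*}
Hence $g(\gamma) - g(-\gamma)$ has the same sign as $g'(0)$ as soon as $2\gamma |g'(0)| > L\gamma^2$, i.e. $|g'(0)| > \tfrac12 L\gamma$, and the reduction above then finishes the argument.

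The remaining --- and principal --- step is to check that the prescribed radius $\gamma = \norm{\nabla f(x,\xi)} / (\sqrt{d}\,L)$ is small enough, that is, $\tfrac12 L\gamma = \norm{\nabla f(x,\xi)}/(2\sqrt{d}) < |\dotprod{\nabla f(x,\xi)}{\ee}|$. This is the delicate point and, I expect, the main obstacle: for a uniformly random $\ee \in S^d(1)$ the projection $|\dotprod{\nabla f(x,\xi)}{\ee}|$ is of order $\norm{\nabla f(x,\xi)}/\sqrt{d}$, so the inequality holds for ``typical'' directions but can fail for atypically small ones. I would resolve this either by restricting the statement to the high-probability event $\{\, |\dotprod{\nabla f(x,\xi)}{\ee}| > \norm{\nabla f(x,\xi)}/(2\sqrt{d}) \,\}$, or --- since only $\mathbb{E}_\ee[\cdot]$ of the resulting search direction enters the subsequent analysis (Lemma~\ref{lem_sec5_lem2} and Theorem~\ref{th:Asymptotic}) --- by absorbing the contribution of the bad directions into the constant $c$ there; shrinking $\gamma_k$ by a constant factor is a further option. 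Away from this subtlety the argument is elementary, relying only on the chain rule along a line and the two-sided quadratic consequence of $L$-smoothness.
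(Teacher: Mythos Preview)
Your approach is essentially the paper's: use two-sided $L$-smoothness at $x$ to get
\[
\bigl|\,f(x+\gamma\ee,\xi)-f(x-\gamma\ee,\xi)-2\gamma\langle\nabla f(x,\xi),\ee\rangle\,\bigr|\le L\gamma^2,
\]
conclude the signs agree once $L\gamma \lesssim |\langle\nabla f(x,\xi),\ee\rangle|$, and then argue probabilistically over $\ee$. The only substantive difference is in the last step. The paper executes it explicitly by writing $\ee=\uu/\|\uu\|$ with $\uu\sim\mathcal N(0,I)$, bounding the failure probability by an expression of the form $\inf_{\tilde\beta>0}\{\tilde\beta + c\,L\gamma\sqrt{d\log(1/\tilde\beta)}/\|\nabla f(x,\xi)\|\}$, and then asserting that with $\gamma=\|\nabla f(x,\xi)\|/(\sqrt d\,L)$ this infimum equals~$0$, hence the identity holds with probability~$1$.

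Your hesitation about exactly this point is well placed: at the stated $\gamma$ the coefficient $L\gamma\sqrt d/\|\nabla f(x,\xi)\|$ is of order~$1$, so the infimum is \emph{not} zero, and indeed $\mathbb P_\ee(|\langle\nabla f(x,\xi),\ee\rangle|<\|\nabla f(x,\xi)\|/(2\sqrt d))$ is a fixed positive constant. So the paper's ``probability $1$'' conclusion is not supported by its own bound; the lemma as literally stated (a pointwise identity for every $\ee$) cannot hold. Your proposed fixes---treat it as a high-probability statement, or shrink $\gamma$ by a factor and absorb the exceptional set into the constant $c$ of Lemma~\ref{lem_sec5_lem2}---are the right way to repair this, and are consistent with how the result is actually used downstream.
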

\begin{proof}
    From the Assumption of $L$-smoothness of the function $f$ we have:
    \begin{align*}
        \dotprod{\nabla f(x, \xi)}{\gamma \ee} - \frac{L \gamma^2}{2} &\leq f(x + \gamma \ee,\xi) - f(x, \xi) \leq \dotprod{\nabla f(x, \xi)}{\gamma \ee} + \frac{L \gamma^2}{2};\\
        - \dotprod{\nabla f(x, \xi)}{\gamma \ee} - \frac{L \gamma^2}{2} &\leq f(x - \gamma \ee,\xi) - f(x, \xi) \leq - \dotprod{\nabla f(x, \xi)}{\gamma \ee} + \frac{L \gamma^2}{2}.
    \end{align*}

    Subtracting the inequalities, we obtain
    \begin{equation*}
        \left|f(x + \gamma \ee,\xi) - f(x - \gamma \ee,\xi) - 2\gamma \dotprod{\nabla f(x, \xi)}{\ee} \right| \leq L\gamma^2.
    \end{equation*}
    Consequently, if $L \gamma^2 \leq \gamma |\dotprod{\nabla f(x, \xi)}{\ee}|$, we have that 
    \begin{equation*}
        \phi (x + \gamma \ee, x - \gamma \ee, \xi) \ee = \textnormal{sign} \left[ \dotprod{\nabla f(x, \xi)}{ \ee} \right] \ee.
    \end{equation*}
    Let`s analyse $\mathcal{P}_\ee \left( L \gamma \leq |\dotprod{\nabla f(x, \xi)}{\ee}| \right)$. It is known that for $\uu \sim \mathcal{N}\left( 0, I \right)$, $\ee : = \frac{\uu}{\| \uu \|}$ is a vector uniformly distributed on the unit Euclidean sphere. Then the following is true:
    \begin{align*}
        \mathcal{P}_\ee \left(  |\dotprod{\nabla f(x, \xi)}{\ee}| \geq L \gamma \right) &= \mathcal{P}_\uu \left(  |\dotprod{\nabla f(x, \xi)}{\uu}| \geq L \gamma \| \uu \| \right)\\
        & \leq \mathcal{P}_\uu \left(  |\dotprod{\nabla f(x, \xi)}{\uu}| \geq 2 L \gamma \sqrt{d \log 1/\tilde{\beta}} \right) + \mathcal{P}_\uu \left(  \| \uu \| \geq 2\sqrt{d \log 1/\tilde{\beta}} \right)\\
        & \leq \mathcal{P}_\uu \left(  |\dotprod{\nabla f(x, \xi)}{\uu}| \geq 2 L \gamma \sqrt{d \log 1/\tilde{\beta}} \right) + \tilde{\beta},
    \end{align*}
    where we used the well-known fact that $\forall \tilde{\beta}$: $\mathcal{P}_\uu \left(  \| \uu \| \leq 2\sqrt{d \log 1/\tilde{\beta}} \right) \geq 1 - \tilde{\beta}$.  On the other hand, since $\dotprod{\nabla f(x, \xi)}{\uu} \sim \mathcal{N} \left( 0, \norm{\nabla f(x,\xi)}^2 \right)$, then for all $\gamma > 0$ we obtain
    \begin{equation*}
        \mathcal{P} \left(  |\dotprod{\nabla f(x, \xi)}{\uu}| \leq \gamma \right) \leq \frac{2 \gamma}{\norm{\nabla f(x,\xi)} \sqrt{2 \pi}} \leq \frac{\gamma}{\norm{\nabla f(x,\xi)} }.
    \end{equation*}
    Combining the inequalities, we have that $\phi (x + \gamma \ee, x - \gamma \ee, \xi) \ee = \textnormal{sign} \left[ \dotprod{\nabla f(x, \xi)}{ \ee} \right] \ee$ except with probability at most
    \begin{equation*}
        \inf_{\tilde{\beta}>0} \left\{  \tilde{\beta} + \frac{2 L \gamma \sqrt{d \log 1/\tilde{\beta}}}{\norm{\nabla f(x,\xi)}}\right\} \leq \frac{3 L \gamma}{\norm{\nabla f(x,\xi)}} \sqrt{d \log \frac{\norm{\nabla f(x,\xi)}}{\sqrt{d} L \gamma}} \overset{\circledOne}{=} 0 = \beta,
    \end{equation*}
    where in $\circledOne$ we take $\gamma = \frac{\norm{\nabla f(x,\xi)}}{\sqrt{d} L}$. Thus, given the condition on the smoothing parameter, the statement of the Lemma is satisfied with probability 1.
    
\end{proof}
\begin{lemma}
    Let vector $\nabla f(x,\xi) \in \mathbb{R}^d$ and vector $\ee \in S^d(1)$, then we~have
    \begin{equation*}
        \mathbb{E} \left[ \textnormal{sign} \left[ \dotprod{\nabla f(x, \xi)}{\ee} \right] \ee \right] = \frac{c}{\sqrt{d}} \cdot \frac{\nabla f(x, \xi)}{\norm{\nabla f(x, \xi)}},
    \end{equation*}
    where ${c \in [\frac{1}{20}, 1]}$ is some universal constant.
\end{lemma}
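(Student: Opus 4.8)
The plan is to reduce the $d$-dimensional spherical average to a one-dimensional constant by pure symmetry, and then to bound that constant. Write $g := \nabla f(x,\xi)$ and assume $g \neq \mathbf{0}$ (otherwise both sides vanish); set $\hat{g} := g/\norm{g}$. First I would show that $\mathbb{E}_{\ee}\left[\textnormal{sign}\left[\dotprod{g}{\ee}\right]\ee\right]$ is a nonnegative multiple of $\hat{g}$. Indeed, for any orthogonal matrix $Q$ with $Qg = g$, the change of variables $\ee \mapsto Q\ee$ preserves the uniform law on $S^d(1)$ and leaves the integrand unchanged, since $\dotprod{g}{Q\ee} = \dotprod{Q^{\top}g}{\ee} = \dotprod{g}{\ee}$; hence the expectation is fixed by every rotation of the hyperplane $g^{\perp}$ and must be parallel to $g$. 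Dotting with $\hat{g}$ identifies the coefficient: $\mathbb{E}_{\ee}\left[\textnormal{sign}\left[\dotprod{g}{\ee}\right]\ee\right] = \lambda\,\hat{g}$ with $\lambda = \mathbb{E}_{\ee}\left[\abs{\dotprod{\hat{g}}{\ee}}\right] \geq 0$, and rotational invariance shows this expectation does not depend on the choice of unit vector $\hat{g}$, so $\lambda = \mathbb{E}_{\ee}\left[\abs{\ee^{(1)}}\right]$, the expected absolute value of the first coordinate $\ee^{(1)}$ of a uniformly random $\ee \in S^d(1)$. Taking $c := \sqrt{d}\,\lambda$ then yields exactly the stated identity, and it remains to verify $c \in [\frac{1}{20}, 1]$.

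For the upper bound I would use Jensen's inequality together with the elementary fact $\mathbb{E}_{\ee}\left[(\ee^{(1)})^2\right] = 1/d$, which follows from $1 = \mathbb{E}_{\ee}\norm{\ee}^2 = \sum_{i=1}^{d} \mathbb{E}_{\ee}\left[(\ee^{(i)})^2\right] = d\,\mathbb{E}_{\ee}\left[(\ee^{(1)})^2\right]$ by coordinate symmetry: then $\lambda = \mathbb{E}_{\ee}\abs{\ee^{(1)}} \leq \sqrt{\mathbb{E}_{\ee}\left[(\ee^{(1)})^2\right]} = 1/\sqrt{d}$, so $c \leq 1$. For the lower bound I would pass to the Gaussian representation $\ee = \uu/\norm{\uu}$ with $\uu \sim \mathcal{N}(0, I_d)$; since the direction $\uu/\norm{\uu}$ and the norm $\norm{\uu}$ are independent, $\mathbb{E}\abs{\uu^{(1)}} = \mathbb{E}\left[\abs{\ee^{(1)}}\,\norm{\uu}\right] = \mathbb{E}\abs{\ee^{(1)}}\cdot\mathbb{E}\norm{\uu}$, hence $\lambda = \sqrt{2/\pi}\,/\,\mathbb{E}\norm{\uu}$. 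Combining this with $\mathbb{E}\norm{\uu} \leq \sqrt{\mathbb{E}\norm{\uu}^2} = \sqrt{d}$ gives $\lambda \geq \sqrt{2/\pi}/\sqrt{d}$, i.e.\ $c \geq \sqrt{2/\pi} > \frac{1}{20}$. (If one prefers to stay on the sphere, Hölder's inequality $\mathbb{E}_{\ee}\left[(\ee^{(1)})^2\right] \leq \left(\mathbb{E}_{\ee}\abs{\ee^{(1)}}\right)^{2/3}\left(\mathbb{E}_{\ee}\left[(\ee^{(1)})^4\right]\right)^{1/3}$ together with the standard moment $\mathbb{E}_{\ee}\left[(\ee^{(1)})^4\right] = 3/(d(d+2))$ yields $c \geq 1/\sqrt{3}$, also comfortably above $\frac{1}{20}$.)

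The only genuinely delicate step is the symmetry argument, which simultaneously fixes the direction of the expectation and collapses its magnitude to the scalar $\mathbb{E}_{\ee}\abs{\ee^{(1)}}$; once that reduction is in place, the two-sided bound on $c$ is routine. A minor technicality is that $\textnormal{sign}\left[\dotprod{g}{\ee}\right]$ is undefined on $\{\dotprod{g}{\ee} = 0\}$, but for $g \neq \mathbf{0}$ this event has $\ee$-measure zero and does not affect the expectation. Note also that $c = c(d)$ depends on the dimension but, by the above, lies in $[\sqrt{2/\pi}, 1] \subseteq [\frac{1}{20}, 1]$ for every $d \geq 1$.
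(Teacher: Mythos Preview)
Your proof is correct and follows the same overall architecture as the paper's: a symmetry argument to reduce the vector expectation to the scalar $\lambda=\mathbb{E}\abs{\ee^{(1)}}$, then an upper bound via Jensen and a lower bound via the Gaussian representation $\ee=\uu/\norm{\uu}$. The direction step differs only cosmetically---the paper uses the single reflection $P=2\hat{g}\hat{g}^{\top}-I$ and averages $\ee$ with $P\ee$, while you invoke the full stabilizer of $g$ in $O(d)$; both arrive at $\lambda=\mathbb{E}\abs{\dotprod{\hat{g}}{\ee}}$.

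The lower bound, however, is genuinely different and sharper. The paper argues probabilistically: with $\uu$ having i.i.d.\ $\mathcal{N}(0,1/d)$ entries it shows $\mathcal{P}\bigl(\abs{\ee^{(1)}}\geq 0.25/\sqrt{d}\bigr)\geq 0.2$ via a union bound over $\{\abs{u_1}<1/\sqrt{d}\}$ and $\{\norm{\uu}>1/v\}$, yielding only $c\geq 1/20$. You instead exploit the independence of $\uu/\norm{\uu}$ and $\norm{\uu}$ to factor $\mathbb{E}\abs{u^{(1)}}=\lambda\cdot\mathbb{E}\norm{\uu}$, which together with $\mathbb{E}\norm{\uu}\leq\sqrt{d}$ gives $c\geq\sqrt{2/\pi}\approx 0.798$. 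This is cleaner, avoids the tail-probability bookkeeping, and shows the constant is in fact bounded away from zero by a dimension-free quantity much larger than $1/20$; your H\"older alternative $c\geq 1/\sqrt{3}$ is another nice route the paper does not take. Nothing is lost by your approach---the paper's cruder bound simply reflects that $1/20$ suffices for its downstream use.
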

\begin{proof}
    Without loss of generality, we can assume $\norm{\nabla f(x,\xi)} = 1$, as normalizing $\norm{\nabla f(x, \xi)}$ does not impact the left-hand side. First, let's demonstrate that $\mathbb{E} \left[ \textnormal{sign} \left[ \dotprod{\nabla f(x, \xi)}{\ee} \right] \ee \right] = \zeta \nabla f(x, \xi)$ for some $\zeta \in \mathbb{R}$. Consider the reflection matrix along $\nabla f(x, \xi)$ given by: 
    \begin{equation*}
        P = 2 \nabla f(x, \xi) \nabla f(x, \xi)^{\textnormal{T}} - I,
    \end{equation*}
    and examine the random vector $\tilde{\ee} = P \ee$. As you can see that 
    \begin{equation*}
        \textnormal{sign} \left[ \dotprod{\nabla f(x, \xi)}{\tilde{\ee}} \right] = \textnormal{sign} \left[ 2 \norm{\nabla f(x,\xi)}^2 \dotprod{\nabla f(x,\xi)}{\ee} - \dotprod{\nabla f(x,\xi)}{\ee} \right] = \textnormal{sign} \left[ \dotprod{\nabla f(x, \xi)}{\ee} \right].
    \end{equation*}
    Since $\tilde{\ee}$ is also a random vector on the unit sphere, we then have
    \begin{align*}
        \expectSphere{\textnormal{sign} \left[ \dotprod{\nabla f(x, \xi)}{\ee} \right] \ee} &= \frac{1}{2} \expectSphere{\textnormal{sign} \left[ \dotprod{\nabla f(x, \xi)}{\ee} \right] \ee} + \frac{1}{2} \expectSphere{\textnormal{sign} \left[ \dotprod{\nabla f(x, \xi)}{\tilde{\ee}} \right] \tilde{\ee}}\\
        &= \frac{1}{2} \expectSphere{\textnormal{sign} \left[ \dotprod{\nabla f(x, \xi)}{\ee} \right] \ee} \\
        & \quad \quad + \frac{1}{2} \expectSphere{\textnormal{sign} \left[ \dotprod{\nabla f(x, \xi)}{\ee} \right] \left( 2 \nabla f(x, \xi) \nabla f(x, \xi)^{\textnormal{T}} - I \right) \ee}\\
        &=\expectSphere{\dotprod{\nabla f(x, \xi)}{\ee} \textnormal{sign} \left[ \dotprod{\nabla f(x, \xi)}{\ee} \right]} \nabla f(x, \xi).
    \end{align*}
    Thus, $\expectSphere{\textnormal{sign} \left[ \dotprod{\nabla f(x, \xi)}{\ee} \right] \ee} = \zeta \nabla f(x)$, where $\zeta = \expectSphere{|\dotprod{\nabla f(x, \xi)}{\ee}|}$. It remains to restrict $\zeta$, which by virtue of rotation invariance is equal to $\zeta = \expect{e_1}$. For an upper bound, observe that by symmetry $\expect{e_1^2} = \frac{1}{d} \expect{\sum_{i=1}^{d} e_i^2} = \frac{1}{d}$ and consequently:
    \begin{equation*}
        \expect{|e_1|}\leq \sqrt{\expect{e_1^2}} = \frac{1}{\sqrt{d}}.
    \end{equation*}
    Let us prove a lower bound on $\zeta$. If $\ee$ were a Gaussian random vector with i.i.d. entries $e_i \sim \mathcal{N}\left( 0, \frac{1}{d} \right)$, then from the standard properties of the (truncated) Gaussian distribution we would obtain that $\expect{|e_1|} = \sqrt{\frac{2}{\pi d}}$. For $\ee$ uniformly distributed on the unit sphere, $e_i$ is distributed like $\frac{u_1}{\norm{\uu}}$, where $\uu$ is Gaussian with i.i.d. entries $\mathcal{N}\left( 0, \frac{1}{d} \right)$. We then can write:
    \begin{align*}
        \mathcal{P}\left( | e_1| \geq \frac{v}{\sqrt{d}} \right) &= \mathcal{P}\left( \frac{|u_1|}{\norm{\uu}} \geq \frac{v}{\sqrt{d}} \right) \geq  \mathcal{P}\left( |u_1| \geq \frac{1}{\sqrt{d}} \textnormal{and} \norm{\uu} \leq \frac{1}{v} \right)\\
        &\geq 1 - \mathcal{P}\left( |u_1| \geq \frac{1}{\sqrt{d}} \right) - \mathcal{P}\left(  \norm{\uu} > \frac{1}{v} \right).
    \end{align*}
    Since $u_1\sqrt{d}$ is a standard Normal, we obtain
    \begin{equation*}
        \mathcal{P}\left( |u_1| \geq \frac{1}{\sqrt{d}} \right) = \mathcal{P}\left( -1 < u_1 \sqrt{d} < 1 \right) \leq 0.7,
    \end{equation*}
    and since $\expect{\norm{\uu}^2} = 1$, the application of Markov inequality gives:
    \begin{equation*}
        \mathcal{P}\left(  \norm{\uu} > \frac{1}{v} \right) = \mathcal{P}\left(  \norm{\uu}^2 > \frac{1}{v^2} \right) \leq v^2 \expect{\norm{\uu}^2} = v^2.
    \end{equation*}
    For $v = 0.25$, this means that $\mathcal{P} \left( |e_1| \geq 0.25 \sqrt{d} \right) \geq 0.2$, whence $\zeta = \expect{|e_1|} \geq \frac{1}{20} \sqrt{d}$.

\end{proof}
Now that we have shown that our method~\eqref{alg:Stochastic} is a normalized stochastic gradient descent, then by applying Theorem 2 of \cite{Polyak_1980}, and refining Theorem 2 to the case of normalized SGD, where in our case
\begin{align*}
    R(x) &= \expect{\phi (x + \gamma \ee, x - \gamma \ee, \xi) \ee} =  \expect{\textnormal{sign} \left[ \dotprod{\nabla f(x, \xi)}{\ee}\right]\ee} \overset{\circledOne}{=} \frac{c}{\sqrt{d}} \mathbb{E}_{\xi} \left[ \frac{\nabla f(x, \xi)}{\norm{\nabla f(x, \xi)}} \right] \\
    &\overset{\circledTwo}{=} \frac{c}{\sqrt{d}} \int \varphi (\nabla f(x) + z) d P(z) = \frac{c}{\sqrt{d}} \psi(\nabla f(x)),
\end{align*}
where in $\circledOne$ we used the Lemma \ref{lem_sec5_lem2}, and in $\circledTwo$ we defined a function $\varphi(z) = \frac{z}{\norm{z}}$ and used that $\nabla f(x,\xi) = \nabla f(x) + \xi$.  Thus, using the following
\begin{equation*}
    R'(x^*) = \frac{c}{\sqrt{d}} \psi'(0) \nabla^2 f(x^*) = \frac{c}{\sqrt{d}} \int \nabla \varphi (z) d P(z) \nabla^2 f(x^*) = \left( 1 - \frac{1}{d} \right) \frac{c}{\sqrt{d}} \underbrace{\int \norm{z}^{-1} d P(z)}_{\alpha} \nabla^2 f(x^*)
\end{equation*}

and $\eta_k = \frac{\eta}{k}$, we obtain the sought statement of Theorem~\ref{th:Asymptotic}, which guarantees asymptotic convergence.

\section{Description of the "Private Communication" Approach} \label{app:privet_comm}
In this section, we describe an approach privately communicated to us by Yurii Nesterov to create a more efficient algorithm for \textit{low-dimensional} problems, compared to the gradient-based methods proposed in this paper, using only the Order Oracle~\eqref{eq:Order_Oracle}. 

\begin{wrapfigure}{r}{0.3\textwidth}
  \includegraphics[width=0.33\textwidth]{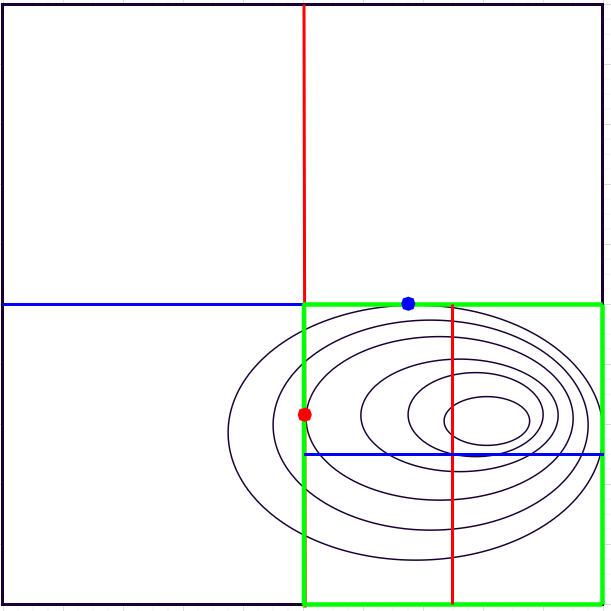}
  \caption{Schematic of algorithm}
  \label{fig:Nesterov_algorithm}
\end{wrapfigure}
Consider the following method for solving the problem of minimizing a convex Lipschitz function (with Lipschitz constant~$L$) on a square in $\mathbb{R}^2$ with side $R$. A \textit{horizontal line} (blue line) is drawn through the center of the square. On the interval carved from the square of this line, with accuracy $\sim \varepsilon/ \log (LR/\varepsilon)$ (by function) we solve the one-dimensional optimization (line search) problem. At the found point (blue point), the direction of the function gradient (which is determined via the Order Oracle) is calculated and it is determined in which of the two rectangles it ``looks"; this rectangle is ``discarded". A \textit{vertical line} (red line) is drawn through the center of the remaining rectangle, and on the segment carved by this line in the rectangle, also with accuracy~$\sim \varepsilon/ \log (LR/\varepsilon)$ (by function) solves the problem of one-dimensional optimization (line search) problem.  At the found point (red point), the direction of the function gradient (which is determined via the Order Oracle) is calculated and it is determined in which of the two rectangles it ``looks"; this rectangle is ``discarded". As a result of this procedure, the linear size of the original square is halved (green square). The schematic of the performance of the algorithm is shown in Figure~\ref{fig:Nesterov_algorithm}.
\begin{corollary}
    It is not difficult to show that after $\log (LR/\varepsilon)$ repetitions of such a procedure we can find with $\varepsilon$ accuracy (by function) the solution of the original problem.
\end{corollary}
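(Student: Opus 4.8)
The plan is to combine two essentially independent facts: a \emph{geometric} one — the nested rectangles shrink by a factor $2$ in linear size per repetition, so after $T:=\lceil\log_2(LR/\varepsilon)\rceil$ repetitions the surviving square has side at most $R/2^{T}\le\varepsilon/L$ — and an \emph{optimization} one — each of the two ``cut'' steps inside a repetition discards a half-rectangle while losing at most $\tilde\varepsilon:=\Theta(\varepsilon/\log(LR/\varepsilon))$ in the value of the constrained minimum. Since the whole run performs exactly $2T$ cuts, the accumulated loss is $2T\tilde\varepsilon=\Obound{\varepsilon}$. First I would fix the accuracy of every one-dimensional line search to this $\tilde\varepsilon$ (the GRM guarantee recalled in the excerpt is precisely ``accuracy by function'', which is all that is available under a possibly noisy Order Oracle), and let $Q_0$ be the initial square, $f^*=\min_{Q_0}f$, attained at $x^*\in Q_0$.

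The technical core is the following \textbf{cut lemma}: let $\mathcal R$ be the current rectangle, $\ell$ the line through its center along which the line search is run, $\mathcal R^{+},\mathcal R^{-}$ the two closed halves, $p\in\ell\cap\mathcal R$ the GRM output (so $f(p)\le\min_{\ell\cap\mathcal R}f+\tilde\varepsilon$), and suppose the Order Oracle certifies $\langle\nabla f(p),\nu\rangle\ge 0$, where $\nu$ is the unit normal to $\ell$ pointing into $\mathcal R^{-}$ (the ``rectangle the gradient looks into''); then, discarding $\mathcal R^{-}$,
\[
    \min_{\mathcal R^{+}}f \;\le\; \min_{\mathcal R}f+\tilde\varepsilon .
\]
I would prove it by convexity and first-order optimality. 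In the idealized case of an exact line search, $p$ is interior to $\ell\cap\mathcal R$ and $\nabla f(p)$ is orthogonal to $\ell$; together with $\langle\nabla f(p),\nu\rangle\ge 0$ this makes $p$ a minimizer of the convex $f$ over the whole half-space $\{x:\langle x-p,\nu\rangle\ge 0\}\supseteq\mathcal R^{-}$, so $\min_{\mathcal R^{-}}f=f(p)=\min_{\ell\cap\mathcal R}f\le\min_{\mathcal R^{+}}f$ and the cut loses nothing. For the $\tilde\varepsilon$-approximate line search one takes the minimizer $x^*_{\mathcal R}$ of $f$ over $\mathcal R$; if it already lies in $\mathcal R^{+}$ there is nothing to do, otherwise one projects it orthogonally onto $\ell$ (landing in $\ell\cap\mathcal R\subseteq\mathcal R^{+}$) and bounds the increase of $f$ along this normal segment by convexity and the sign information, the $\tilde\varepsilon$ slack entering additively through $f(p)\le\min_{\ell\cap\mathcal R}f+\tilde\varepsilon$.

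With the cut lemma in hand the rest is bookkeeping. I would run an induction over the $2T$ cuts maintaining $\min_{\mathcal R_k}f\le f^*+k\tilde\varepsilon$, with base case $\mathcal R_0=Q_0$ (equality) and the cut lemma as the inductive step. Hence the final square $\mathcal R_{2T}$, of side $\le R/2^{T}\le\varepsilon/L$ and diameter $\le\sqrt2\,\varepsilon/L$, satisfies $\min_{\mathcal R_{2T}}f\le f^*+2T\tilde\varepsilon$, and by $L$-Lipschitzness every point of $\mathcal R_{2T}$ — in particular its center, which one may take as the method's output — has value at most $\min_{\mathcal R_{2T}}f+\sqrt2\,\varepsilon\le f^*+\Obound{\varepsilon}$. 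Choosing the constant in $\tilde\varepsilon$ so that $2T\tilde\varepsilon\le\varepsilon$ and rescaling $\varepsilon$ completes the argument, the number of repetitions being $T=\Obound{\log(LR/\varepsilon)}$ as claimed.

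The step I expect to be the genuine obstacle is the \emph{robust} version of the cut lemma: the Order Oracle reports the sign of the directional derivative at the \emph{approximate} minimizer $p$ rather than at the exact one, and when $f$ is flat along $\ell$ these two points need not be close, so one must argue that even a ``wrong'' cut still costs only $\Obound{\tilde\varepsilon}$; this goes hand in hand with the boundary cases where $x^*_{\mathcal R}$ or $p$ lies on $\partial\mathcal R$, which replace the equality ``$\nabla f(p)\perp\ell$'' by a one-sided (sub)gradient condition. Everything else — the geometric decay of the squares, the final Lipschitz estimate, and the accumulation of the per-cut errors — is routine.
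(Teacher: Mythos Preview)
The paper does not actually prove this corollary: it states the result as ``not difficult to show'' and gives no further argument. So there is no proof in the paper to compare against; your proposal is a genuine attempt to fill a gap the paper leaves open.

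Your overall architecture (geometric halving of the square, a per-cut loss lemma, and summing $2T$ such losses) is the natural one. The place where your argument does not go through is exactly the one you flag --- the robust cut lemma --- but the issue is more serious than ``boundary cases'': your proposed mechanism for the inexact case (``project $x^*_{\mathcal R}$ onto $\ell$ and bound the increase of $f$ along the normal segment by convexity and the sign information'') cannot work as written, because the sign information $\langle\nabla f(p),\nu\rangle\ge 0$ lives at $p$, not at the projection point, and convexity does not let you transport it there. Concretely, take the convex quadratic $f(x,y)=\tfrac{M}{2}(x+y)^2-cy$ on $[-1,1]^2$ with $M$ of order $L$. On $\ell=\{y=0\}$ the minimum is at $p^*=(0,0)$ with $\partial_\nu f(p^*)=-c<0$, but any line-search output $p=(\delta,0)$ with $\tfrac{M}{2}\delta^2\le\tilde\varepsilon$ is admissible under the ``by function'' guarantee, and for $\delta>c/M$ one has $\partial_\nu f(p)=M\delta-c>0$, so the algorithm discards the half containing the global minimizer $(-1,1)$. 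The loss of that single cut is $\min_{\mathcal R^+}f-\min_{\mathcal R}f=c$, and $c$ can be taken as large as $\sqrt{2M\tilde\varepsilon}\asymp\sqrt{L\tilde\varepsilon}$, not $O(\tilde\varepsilon)$. Summed over $2T=\Theta(\log(LR/\varepsilon))$ cuts with your choice $\tilde\varepsilon\asymp\varepsilon/\log(LR/\varepsilon)$, this gives a total error of order $\sqrt{L\varepsilon\log(LR/\varepsilon)}$, which does not yield $\varepsilon$-accuracy.

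To rescue the argument you need more than the ``accuracy by function'' guarantee you deliberately restricted yourself to. In the noiseless setting the GRM actually delivers accuracy \emph{in argument} (it returns an interval containing $p^*$), and using $|p-p^*|$ small --- together with a different invariant that tracks the best point found so far rather than $\min_{\mathcal R_k}f$ --- is the more promising route. Alternatively, one can mimic the classical cutting-plane analysis: show that if the cut ever removes $x^*$, then the convexity inequality at $p$ (combined with the line-search guarantee on $\ell$ applied to the projection of $x^*$) already forces $f(p)\le f^*+O(\tilde\varepsilon)$; but this needs a careful two-step bound on the tangential term $\langle\nabla f(p),\tau\rangle\cdot s$, not the single projection you sketch.
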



\newpage
\section*{NeurIPS Paper Checklist}

\begin{enumerate}

\item {\bf Claims}
    \item[] Question: Do the main claims made in the abstract and introduction accurately reflect the paper's contributions and scope?
    \item[] Answer: \answerYes{} 
    \item[] Justification: For convenience, we have also added a Subsection “\textit{Our contributions}” to Section~\ref{sec:Introduction} (Introduction).
    \item[] Guidelines:
    \begin{itemize}
        \item The answer NA means that the abstract and introduction do not include the claims made in the paper.
        \item The abstract and/or introduction should clearly state the claims made, including the contributions made in the paper and important assumptions and limitations. A No or NA answer to this question will not be perceived well by the reviewers. 
        \item The claims made should match theoretical and experimental results, and reflect how much the results can be expected to generalize to other settings. 
        \item It is fine to include aspirational goals as motivation as long as it is clear that these goals are not attained by the paper. 
    \end{itemize}

\item {\bf Limitations}
    \item[] Question: Does the paper discuss the limitations of the work performed by the authors?
    \item[] Answer: \answerYes{} 
    \item[] Justification: See Section~\ref{sec:Introduction} (Introduction) where we formulate the problem statement. See also the statement of Theorems, where we provide conditions under which the theoretical results are guaranteed, and the discussion of them. In addition, we discuss in Section~\ref{sec:discussion} (Discussion) how the algorithm would behave if we move away from the assumptions under consideration, e.g., the gradient dominance assumption (Polyak--Lojasiewicz condition).
    \item[] Guidelines:
    \begin{itemize}
        \item The answer NA means that the paper has no limitation while the answer No means that the paper has limitations, but those are not discussed in the paper. 
        \item The authors are encouraged to create a separate "Limitations" section in their paper.
        \item The paper should point out any strong assumptions and how robust the results are to violations of these assumptions (e.g., independence assumptions, noiseless settings, model well-specification, asymptotic approximations only holding locally). The authors should reflect on how these assumptions might be violated in practice and what the implications would be.
        \item The authors should reflect on the scope of the claims made, e.g., if the approach was only tested on a few datasets or with a few runs. In general, empirical results often depend on implicit assumptions, which should be articulated.
        \item The authors should reflect on the factors that influence the performance of the approach. For example, a facial recognition algorithm may perform poorly when image resolution is low or images are taken in low lighting. Or a speech-to-text system might not be used reliably to provide closed captions for online lectures because it fails to handle technical jargon.
        \item The authors should discuss the computational efficiency of the proposed algorithms and how they scale with dataset size.
        \item If applicable, the authors should discuss possible limitations of their approach to address problems of privacy and fairness.
        \item While the authors might fear that complete honesty about limitations might be used by reviewers as grounds for rejection, a worse outcome might be that reviewers discover limitations that aren't acknowledged in the paper. The authors should use their best judgment and recognize that individual actions in favor of transparency play an important role in developing norms that preserve the integrity of the community. Reviewers will be specifically instructed to not penalize honesty concerning limitations.
    \end{itemize}

\item {\bf Theory Assumptions and Proofs}
    \item[] Question: For each theoretical result, does the paper provide the full set of assumptions and a complete (and correct) proof?
    \item[] Answer: \answerYes{} 
    \item[] Justification: The main assumptions and notations can be found in Section~\ref{sec:Introduction} (Introduction). Each theoretical statement contains conditions (assumptions) under which a particular result is guaranteed. And also after each theoretical result (discussion) there is a link to the Appendix section with a detailed proof. 
    \item[] Guidelines:
    \begin{itemize}
        \item The answer NA means that the paper does not include theoretical results. 
        \item All the theorems, formulas, and proofs in the paper should be numbered and cross-referenced.
        \item All assumptions should be clearly stated or referenced in the statement of any theorems.
        \item The proofs can either appear in the main paper or the supplemental material, but if they appear in the supplemental material, the authors are encouraged to provide a short proof sketch to provide intuition. 
        \item Inversely, any informal proof provided in the core of the paper should be complemented by formal proofs provided in appendix or supplemental material.
        \item Theorems and Lemmas that the proof relies upon should be properly referenced. 
    \end{itemize}

    \item {\bf Experimental Result Reproducibility}
    \item[] Question: Does the paper fully disclose all the information needed to reproduce the main experimental results of the paper to the extent that it affects the main claims and/or conclusions of the paper (regardless of whether the code and data are provided or not)?
    \item[] Answer: \answerYes{} 
    \item[] Justification: Section~\ref{sec:Experiments} (Experiments) presents a particular problem formulation~\eqref{eq:init_problem}, which demonstrates the performance of the proposed algorithms with its counterparts in a numerical experiment. Also, both in the main Section~\ref{sec:Experiments} and in the additional Numerical Experiments Section (Appendix~\ref{app:add_experiments}), detailed information on the hyperparameters used and other information necessary to reproduce the experiments is provided. 
    \item[] Guidelines:
    \begin{itemize}
        \item The answer NA means that the paper does not include experiments.
        \item If the paper includes experiments, a No answer to this question will not be perceived well by the reviewers: Making the paper reproducible is important, regardless of whether the code and data are provided or not.
        \item If the contribution is a dataset and/or model, the authors should describe the steps taken to make their results reproducible or verifiable. 
        \item Depending on the contribution, reproducibility can be accomplished in various ways. For example, if the contribution is a novel architecture, describing the architecture fully might suffice, or if the contribution is a specific model and empirical evaluation, it may be necessary to either make it possible for others to replicate the model with the same dataset, or provide access to the model. In general. releasing code and data is often one good way to accomplish this, but reproducibility can also be provided via detailed instructions for how to replicate the results, access to a hosted model (e.g., in the case of a large language model), releasing of a model checkpoint, or other means that are appropriate to the research performed.
        \item While NeurIPS does not require releasing code, the conference does require all submissions to provide some reasonable avenue for reproducibility, which may depend on the nature of the contribution. For example
        \begin{enumerate}
            \item If the contribution is primarily a new algorithm, the paper should make it clear how to reproduce that algorithm.
            \item If the contribution is primarily a new model architecture, the paper should describe the architecture clearly and fully.
            \item If the contribution is a new model (e.g., a large language model), then there should either be a way to access this model for reproducing the results or a way to reproduce the model (e.g., with an open-source dataset or instructions for how to construct the dataset).
            \item We recognize that reproducibility may be tricky in some cases, in which case authors are welcome to describe the particular way they provide for reproducibility. In the case of closed-source models, it may be that access to the model is limited in some way (e.g., to registered users), but it should be possible for other researchers to have some path to reproducing or verifying the results.
        \end{enumerate}
    \end{itemize}

\item {\bf Open access to data and code}
    \item[] Question: Does the paper provide open access to the data and code, with sufficient instructions to faithfully reproduce the main experimental results, as described in supplemental material?
    \item[] Answer: \answerNo{} 
    \item[] Justification: We believe that the presented experiments contain well-described information that allows us to implement the code on ourselves without difficulty. However, if the Reviewers or Area Chair feel that the information provided is not sufficient for reproducibility of the experiments, we will certainly add a link to an anonymous repository with the code.
    \item[] Guidelines:
    \begin{itemize}
        \item The answer NA means that paper does not include experiments requiring code.
        \item Please see the NeurIPS code and data submission guidelines (\url{https://nips.cc/public/guides/CodeSubmissionPolicy}) for more details.
        \item While we encourage the release of code and data, we understand that this might not be possible, so “No” is an acceptable answer. Papers cannot be rejected simply for not including code, unless this is central to the contribution (e.g., for a new open-source benchmark).
        \item The instructions should contain the exact command and environment needed to run to reproduce the results. See the NeurIPS code and data submission guidelines (\url{https://nips.cc/public/guides/CodeSubmissionPolicy}) for more details.
        \item The authors should provide instructions on data access and preparation, including how to access the raw data, preprocessed data, intermediate data, and generated data, etc.
        \item The authors should provide scripts to reproduce all experimental results for the new proposed method and baselines. If only a subset of experiments are reproducible, they should state which ones are omitted from the script and why.
        \item At submission time, to preserve anonymity, the authors should release anonymized versions (if applicable).
        \item Providing as much information as possible in supplemental material (appended to the paper) is recommended, but including URLs to data and code is permitted.
    \end{itemize}

\item {\bf Experimental Setting/Details}
    \item[] Question: Does the paper specify all the training and test details (e.g., data splits, hyperparameters, how they were chosen, type of optimizer, etc.) necessary to understand the results?
    \item[] Answer: \answerYes{} 
    \item[] Justification: In both the main Section~\ref{sec:Experiments} and the supplementary Section (Appendix~\ref{app:add_experiments}), we have provided experiments with detailed information including, for example, parameter values, adversarial noise function, etc. 
    \item[] Guidelines:
    \begin{itemize}
        \item The answer NA means that the paper does not include experiments.
        \item The experimental setting should be presented in the core of the paper to a level of detail that is necessary to appreciate the results and make sense of them.
        \item The full details can be provided either with the code, in appendix, or as supplemental material.
    \end{itemize}

\item {\bf Experiment Statistical Significance}
    \item[] Question: Does the paper report error bars suitably and correctly defined or other appropriate information about the statistical significance of the experiments?
    \item[] Answer: \answerNo{} 
    \item[] Justification: Unfortunately running experiments several times to calculate statistics and error bars would be too computationally expensive
    \item[] Guidelines:
    \begin{itemize}
        \item The answer NA means that the paper does not include experiments.
        \item The authors should answer "Yes" if the results are accompanied by error bars, confidence intervals, or statistical significance tests, at least for the experiments that support the main claims of the paper.
        \item The factors of variability that the error bars are capturing should be clearly stated (for example, train/test split, initialization, random drawing of some parameter, or overall run with given experimental conditions).
        \item The method for calculating the error bars should be explained (closed form formula, call to a library function, bootstrap, etc.)
        \item The assumptions made should be given (e.g., Normally distributed errors).
        \item It should be clear whether the error bar is the standard deviation or the standard error of the mean.
        \item It is OK to report 1-sigma error bars, but one should state it. The authors should preferably report a 2-sigma error bar than state that they have a 96\% CI, if the hypothesis of Normality of errors is not verified.
        \item For asymmetric distributions, the authors should be careful not to show in tables or figures symmetric error bars that would yield results that are out of range (e.g. negative error rates).
        \item If error bars are reported in tables or plots, The authors should explain in the text how they were calculated and reference the corresponding figures or tables in the text.
    \end{itemize}

\item {\bf Experiments Compute Resources}
    \item[] Question: For each experiment, does the paper provide sufficient information on the computer resources (type of compute workers, memory, time of execution) needed to reproduce the experiments?
    \item[] Answer: \answerYes{} 
    \item[] Justification: Detailed information is provided in the section with additional experiments (see Appendix~\ref{app:add_experiments}, Technical Information) 
    \item[] Guidelines:
    \begin{itemize}
        \item The answer NA means that the paper does not include experiments.
        \item The paper should indicate the type of compute workers CPU or GPU, internal cluster, or cloud provider, including relevant memory and storage.
        \item The paper should provide the amount of compute required for each of the individual experimental runs as well as estimate the total compute. 
        \item The paper should disclose whether the full research project required more compute than the experiments reported in the paper (e.g., preliminary or failed experiments that didn't make it into the paper). 
    \end{itemize}
    
\item {\bf Code Of Ethics}
    \item[] Question: Does the research conducted in the paper conform, in every respect, with the NeurIPS Code of Ethics \url{https://neurips.cc/public/EthicsGuidelines}?
    \item[] Answer: \answerYes{} 
    \item[] Justification:  Research conforms with NeurIPS Code of Ethics
    \item[] Guidelines:
    \begin{itemize}
        \item The answer NA means that the authors have not reviewed the NeurIPS Code of Ethics.
        \item If the authors answer No, they should explain the special circumstances that require a deviation from the Code of Ethics.
        \item The authors should make sure to preserve anonymity (e.g., if there is a special consideration due to laws or regulations in their jurisdiction).
    \end{itemize}

\item {\bf Broader Impacts}
    \item[] Question: Does the paper discuss both potential positive societal impacts and negative societal impacts of the work performed?
    \item[] Answer: \answerNA{} 
    \item[] Justification:  This work does not have a negative social impact
    \item[] Guidelines:
    \begin{itemize}
        \item The answer NA means that there is no societal impact of the work performed.
        \item If the authors answer NA or No, they should explain why their work has no societal impact or why the paper does not address societal impact.
        \item Examples of negative societal impacts include potential malicious or unintended uses (e.g., disinformation, generating fake profiles, surveillance), fairness considerations (e.g., deployment of technologies that could make decisions that unfairly impact specific groups), privacy considerations, and security considerations.
        \item The conference expects that many papers will be foundational research and not tied to particular applications, let alone deployments. However, if there is a direct path to any negative applications, the authors should point it out. For example, it is legitimate to point out that an improvement in the quality of generative models could be used to generate deepfakes for disinformation. On the other hand, it is not needed to point out that a generic algorithm for optimizing neural networks could enable people to train models that generate Deepfakes faster.
        \item The authors should consider possible harms that could arise when the technology is being used as intended and functioning correctly, harms that could arise when the technology is being used as intended but gives incorrect results, and harms following from (intentional or unintentional) misuse of the technology.
        \item If there are negative societal impacts, the authors could also discuss possible mitigation strategies (e.g., gated release of models, providing defenses in addition to attacks, mechanisms for monitoring misuse, mechanisms to monitor how a system learns from feedback over time, improving the efficiency and accessibility of ML).
    \end{itemize}
    
\item {\bf Safeguards}
    \item[] Question: Does the paper describe safeguards that have been put in place for responsible release of data or models that have a high risk for misuse (e.g., pretrained language models, image generators, or scraped datasets)?
    \item[] Answer: \answerNA{} 
    \item[] Justification: Presented research doesn’t need safeguards
    \item[] Guidelines:
    \begin{itemize}
        \item The answer NA means that the paper poses no such risks.
        \item Released models that have a high risk for misuse or dual-use should be released with necessary safeguards to allow for controlled use of the model, for example by requiring that users adhere to usage guidelines or restrictions to access the model or implementing safety filters. 
        \item Datasets that have been scraped from the Internet could pose safety risks. The authors should describe how they avoided releasing unsafe images.
        \item We recognize that providing effective safeguards is challenging, and many papers do not require this, but we encourage authors to take this into account and make a best faith effort.
    \end{itemize}

\item {\bf Licenses for existing assets}
    \item[] Question: Are the creators or original owners of assets (e.g., code, data, models), used in the paper, properly credited and are the license and terms of use explicitly mentioned and properly respected?
    \item[] Answer: \answerYes{} 
    \item[] Justification: All the used assets are cited
    \item[] Guidelines:
    \begin{itemize}
        \item The answer NA means that the paper does not use existing assets.
        \item The authors should cite the original paper that produced the code package or dataset.
        \item The authors should state which version of the asset is used and, if possible, include a URL.
        \item The name of the license (e.g., CC-BY 4.0) should be included for each asset.
        \item For scraped data from a particular source (e.g., website), the copyright and terms of service of that source should be provided.
        \item If assets are released, the license, copyright information, and terms of use in the package should be provided. For popular datasets, \url{paperswithcode.com/datasets} has curated licenses for some datasets. Their licensing guide can help determine the license of a dataset.
        \item For existing datasets that are re-packaged, both the original license and the license of the derived asset (if it has changed) should be provided.
        \item If this information is not available online, the authors are encouraged to reach out to the asset's creators.
    \end{itemize}

\item {\bf New Assets}
    \item[] Question: Are new assets introduced in the paper well documented and is the documentation provided alongside the assets?
    \item[] Answer: \answerNA{} 
    \item[] Justification: This work provides as new assets unique theoretical results that are verified by numerical experiment. If necessary, we will provide a code license after acceptance of the paper
    \item[] Guidelines:
    \begin{itemize}
        \item The answer NA means that the paper does not release new assets.
        \item Researchers should communicate the details of the dataset/code/model as part of their submissions via structured templates. This includes details about training, license, limitations, etc. 
        \item The paper should discuss whether and how consent was obtained from people whose asset is used.
        \item At submission time, remember to anonymize your assets (if applicable). You can either create an anonymized URL or include an anonymized zip file.
    \end{itemize}

\item {\bf Crowdsourcing and Research with Human Subjects}
    \item[] Question: For crowdsourcing experiments and research with human subjects, does the paper include the full text of instructions given to participants and screenshots, if applicable, as well as details about compensation (if any)? 
    \item[] Answer: \answerNA{} 
    \item[] Justification: The paper does not involve crowdsourcing nor research with human subjects
    \item[] Guidelines:
    \begin{itemize}
        \item The answer NA means that the paper does not involve crowdsourcing nor research with human subjects.
        \item Including this information in the supplemental material is fine, but if the main contribution of the paper involves human subjects, then as much detail as possible should be included in the main paper. 
        \item According to the NeurIPS Code of Ethics, workers involved in data collection, curation, or other labor should be paid at least the minimum wage in the country of the data collector. 
    \end{itemize}

\item {\bf Institutional Review Board (IRB) Approvals or Equivalent for Research with Human Subjects}
    \item[] Question: Does the paper describe potential risks incurred by study participants, whether such risks were disclosed to the subjects, and whether Institutional Review Board (IRB) approvals (or an equivalent approval/review based on the requirements of your country or institution) were obtained?
    \item[] Answer: \answerNA{} 
    \item[] Justification:  The paper does not involve crowdsourcing nor research with human subjects
    \item[] Guidelines:
    \begin{itemize}
        \item The answer NA means that the paper does not involve crowdsourcing nor research with human subjects.
        \item Depending on the country in which research is conducted, IRB approval (or equivalent) may be required for any human subjects research. If you obtained IRB approval, you should clearly state this in the paper. 
        \item We recognize that the procedures for this may vary significantly between institutions and locations, and we expect authors to adhere to the NeurIPS Code of Ethics and the guidelines for their institution. 
        \item For initial submissions, do not include any information that would break anonymity (if applicable), such as the institution conducting the review.
    \end{itemize}

\end{enumerate}


\end{document}